\documentclass[12pt,reqno]{amsart}
\pdfoutput=1
\usepackage{lipsum,mathtools,graphicx}
\usepackage{xspace,tikz-cd}
\usepackage[a4paper,margin=2.5cm]{geometry}
\usepackage[smalltableaux]{ytableau}
\usepackage[backend=bibtex, style=alphabetic, doi=false,isbn=false]{biblatex}
\addbibresource{qW.bib}
\newtheorem{thm}{Theorem}
\newtheorem{lemma}{Lemma}
\newtheorem{corollary}{Corollary}
\newtheorem{proposition}{Proposition}

\theoremstyle{definition}
\newtheorem{example}{Example}

\newtheorem{definition}{Definition}

\newcommand{\mfp}{{\scriptstyle{\mathfrak{P}}}}
\newcommand{\nminus}{\mathfrak{n}^-}
\newcommand{\lij}[1][ij]{\mathbb{L}_{#1}}
\newcommand{\chil}{\chi_{\Lambda_0}}
\newcommand{\xtil}{\overline{x}}
\newcommand{\qw}{$q$-Whittaker\xspace}

\newcommand{\wl}[1][q]{W_\lambda(X_n; #1)}
\newcommand{\wloc}[1][\lambda]{W_{\mathrm{loc}}(#1)}
\newcommand{\wlhat}[1][\lambda]{W^\dag_{#1}(X_n; q)}

\newcommand{\htl}{\widetilde{H}_{\lambda}(X_n; q,t)}
\newcommand{\naturals}{\mathbb{N}}
\newcommand{\scf}{\mathcal{F}(\lambda)}
\newcommand{\csf}[1][\lambda]{\mathrm{CSF}(#1)}

\newcommand{\GT}{\mathrm{GT}(\lambda)}
\newcommand{\qbinom}[2]{{#1 \brack #2}_q}
\newcommand{\pop}[1][\lambda]{\mathrm{POP}(#1)}

\newcommand{\barsig}{\overline{\sigma}}
\newcommand{\bartau}{\overline{\tau}}
\newcommand{\fdag}{F^\dag}
\newcommand{\ftilde}{\widetilde{F}}
\newcommand{\ctilde}[1][c]{\widetilde{#1}}
\newcommand{\spi}[1][i]{\mathbb{S}_{#1}}

\newcommand{\rootthree}{1.7320508}

\newcommand{\tildesig}{\tilde{\sigma}}
\newcommand{\tildetau}{\tilde{\tau}}

\newcommand{\ihat}{\widehat{I}\,(x,y,z)}
\newcommand{\up}[1][]{{#1}^\uparrow}
\newcommand{\down}[1][]{{#1}^\downarrow}
\newcommand{\afsl}[1][n]{\widehat{\mathfrak{sl}_{#1}}}
\newcommand{\curr}[1][n]{\mathfrak{sl}_{#1}[t]}
\newcommand{\sln}[1][n]{\mathfrak{sl}_{#1}}
\newcommand{\SSYT}[1][\lambda]{\mathrm{SSYT}(#1)}
\newcommand{\Res}[2]{\mathrm{Res}^{#1}_{#2}}
\newcommand{\complex}{\mathbb{C}}
\newcommand{\elp}{\mathbb{E}(\lambda^\prime)}
\DeclareMathOperator{\supp}{supp}
\DeclareMathOperator{\len}{len}
\DeclareMathOperator{\inv}{inv}
\DeclareMathOperator{\refinv}{refinv}
\DeclareMathOperator{\quinv}{quinv}
\DeclareMathOperator{\maj}{maj}
\DeclareMathOperator{\dg}{dg}
\DeclareMathOperator{\dgh}{\widehat{\dg}}
\DeclareMathOperator{\NE}{NE}
\DeclareMathOperator{\SE}{SE}
\DeclareMathOperator{\br}{br}
\DeclareMathOperator{\pr}{pr}
\DeclareMathOperator{\bcomp}{boxcomp}
\DeclareMathOperator{\Comp}{Comp}
\DeclareMathOperator{\area}{area}
\DeclareMathOperator{\rsort}{rsort}

\DeclareMathOperator{\upop}{up}
\DeclareMathOperator{\wt}{wt}
\DeclareMathOperator{\zcount}{zcount}
\DeclareMathOperator{\zcb}{\overline{\zcount}}
\DeclareMathOperator{\coarm}{coarm}
\DeclareMathOperator{\arm}{arm}
\DeclareMathOperator{\Inv}{Inv}
\DeclareMathOperator{\Des}{Des}
\DeclareMathOperator{\cells}{cells}
\DeclareMathOperator{\splice}{splice}
\DeclareMathOperator{\dsplice}{dsplice}
\DeclareMathOperator{\cand}{cand}

\DeclareMathOperator{\CL}{CL}
\DeclareMathOperator{\uea}{\mathfrak{U}}
\definecolor{cadmiumgreen}{rgb}{0.0, 0.42, 0.24}
\definecolor{chocolate}{rgb}{0.48, 0.25, 0.0}
\definecolor{darkpastelgreen}{rgb}{0.01, 0.75, 0.24}
\definecolor{cadmiumred}{rgb}{0.89, 0.0, 0.13}
\definecolor{orangered}{rgb}{1.0, 0.27, 0.0}
\definecolor{light-gray}{gray}{0.9} 
\colorlet{Mycolor1}{green!80!magenta}
\colorlet{Mycolor6}{lime!80!orange}
\colorlet{Mycolor3}{red!90!yellow}
\colorlet{Mycolor7}{yellow!30!blue}
\colorlet{Mycolor5}{orange!80!red}
\colorlet{Mycolor4}{blue!40!pink}
\colorlet{Mycolor8}{yellow!80!magenta}
\definecolor{Mycolor2}{rgb}{0.5, 0.42, 0.24}

\begin{document}
\title[]{$q$-Whittaker polynomials: \\bases, branching and direct limits}
\author[Bhattacharya]{Aritra Bhattacharya}
\address{Beijing International Center for Mathematical Research, Peking University, Beijing 100871, China}
\email{matharitra@gmail.com}
\thanks{The second and third authors acknowledge partial support under a DAE Apex Grant to the Institute of Mathematical Sciences.}
\author[Ratheesh]{T V Ratheesh}
\address{The Institute of Mathematical Sciences, A CI of Homi Bhabha National Institute, Chennai 600113, India}
\email{ratheeshtv@imsc.res.in}
\author[Viswanath]{Sankaran Viswanath}
\address{The Institute of Mathematical Sciences, A CI of Homi Bhabha National Institute, Chennai 600113, India}
\email{svis@imsc.res.in}
\begin{abstract}
We study \qw polynomials and their monomial expansions given by the fermionic formula, the {\em inv} statistic of Haglund-Haiman-Loehr and the {\em quinv} statistic of Ayyer-Mandelshtam-Martin. The combinatorial models underlying these expansions are {\em partition overlaid patterns} and {\em column strict fillings}. The former model is closely tied to representations of the affine Lie algebra $\afsl$ and admits projections, branching maps and direct limits that mirror these structures in the Chari-Loktev basis of local Weyl modules. We formulate novel versions of these notions  in the column strict fillings model and establish their main properties. We construct weight-preserving bijections between the models which are compatible with projection, branching and direct limits. We also establish connections to the {\em coloured lattice paths} formalism for \qw polynomials due to Wheeler and collaborators.
\end{abstract}
\keywords{$q$-Whittaker polynomial, local Weyl modules, Partition overlaid patterns, coloured lattice paths.}
\subjclass{05E10, 05E05}
\maketitle

\section{Introduction}
The principal results of this paper were announced at FPSAC 2024 \cite{brv-fpsac}.
For $n \geq 1$, let $X_n$ denote the tuple of indeterminates $x_1, x_2, \cdots, x_n$. The \qw polynomial $\wl$ is the specialization of the Macdonald symmetric polynomial $P_\lambda(X_n; q,t)$ at $t=0$. It interpolates between the Schur polynomial $s_\lambda$ and the elementary symmetric polynomial $e_{\lambda'}$, reducing to these at $q=0$ and $q=1$ respectively. The \qw polynomial is closely related to the modified Hall-Littlewood polynomial, being obtained from the latter via the classical involution $\omega$ on symmetric polynomials.

In addition to the important role it plays in the theory of symmetric functions, it occurs in diverse other contexts - as the Frobenius characteristic of a graded component of the Garsia-Haiman module \cite{bergeron2020survey}, as the counting function of flags of subspaces over finite fields compatible with a linear endomorphism \cite{hugh-thomas,samrith2,samrith1} and as the partition function of certain ensembles of coloured lattice paths \cite{borodin-wheeler, GarbaliWheeler}, to name a few.

The viewpoint of this paper stems from the connection of \qw polynomials to the representation theory of the affine Lie algebra $\widehat{\mathfrak{sl}_n}$. Let $\lambda$ denote a dominant integral weight of  $\sln$, which we will identify with a partition having at most $n-1$ nonzero parts. In the $\afsl$ setting, the \qw polynomials $\wl$ coincide with the graded character of $\mathfrak{sl}_n$-invariant Demazure modules of the level 1 vacuum module (or basic representation) $L(\Lambda_0)$ of  $\widehat{\mathfrak{sl}_n}$. Such Demazure modules are also indexed by dominant integral weights $\lambda$ of $\sln$, and may equivalently be viewed as graded modules for the maximal parabolic subalgebra  $\mathfrak{sl}_n[t]$ (the {\em current algebra}) of $\afsl$. They are also called {\em local Weyl modules} in this context \cite{ChariLoktev-original, fourier-littelmann}. The graded character of the local Weyl module $\wloc$ is $\wl$ \cite[Corollary 1.5.2]{ChariLoktev-original}, \cite{ChariIon-BGG,ion-demazure,sanderson}.

Setting $q=1$ in $\wl$ is tantamount to ignoring the grading and restricting $\wloc$ to a module over $\sln \subseteq \curr$. In this case, we have $\Res{\curr}{\sln} \wloc \cong \elp$ where
    \begin{equation}\label{eq:elam}
 \elp :=\Lambda^{\lambda_1^\prime} V \otimes \Lambda^{\lambda_2^\prime} V \otimes \cdots \otimes \Lambda^{\lambda_k^\prime} V
    \end{equation}
    where $V = \complex^n$ is the standard representation of $\sln$ \cite{ChariLoktev-original,fourier-littelmann} and $\lambda'_i\, (1 \leq i \leq k)$ denote the parts of the conjugate partition $\lambda'$. The character of 
    $\elp$ is the elementary symmetric polynomial
    \begin{equation}\label{eq:elemsym}
          \wl[1] = e_{\lambda^\prime}(X_n) = \sum_{F \in \csf} x^F, 
    \end{equation}
    where $\csf$ denotes the natural parameterizing set of $\elp$ consisting of column-strict fillings of shape $\lambda$ (see section~\ref{sec:modmac}). 
The following identity due to Haglund, Haiman and Loehr  \cite{HHL-I} for the \qw polynomial\footnote{Specializing their more general identity for the modified Macdonald polynomial.} is a $q$-analog of \eqref{eq:elemsym}: 
\begin{equation}\label{eq:wlinv}
  \wl = \sum_{F \in \csf} x^F q^{\inv(F)} 
\end{equation}
where the statistic {\em inv} is defined in equation~\eqref{eq:inveq} below.

There is also a ``$q$-analog" construction at the level of modules. The local Weyl module $\wloc$ may be viewed as a graded deformation of $\elp$ via the fusion product construction, which introduces a filtration, and thereby an associated grading, on the ungraded module $\elp$. We have \cite[Corollary 1.5.1]{ChariLoktev-original}, \cite[Corollary A]{fourier-littelmann}:
    \begin{equation}\label{eq:fusion}
    \wloc =  \Lambda^{\lambda_1^\prime} V * \Lambda^{\lambda_2^\prime} V * \cdots * \Lambda^{\lambda_k^\prime} V
    \end{equation}
    where $*$ denotes the fusion product of these modules.  The fusion module construction \eqref{eq:fusion} may be viewed as a $q$-analog of the tensor product  \eqref{eq:elam} in so far as it converts the underlying $\sln$-module $\elp$ into a graded module (over $\curr$), with $q$ keeping track of the grades. 

    Another combinatorial model in the same spirit as \eqref{eq:wlinv} is afforded by Ayyer, Mandelshtam and Martin's {\em quinv} statistic\footnote{More generally, this is defined on all fillings of the Young diagram of $\lambda$.}  on $\csf$. In close parallel to \eqref{eq:wlinv}, we have \cite{AMM}:
\begin{equation}\label{eq:wlquinv}
  \wl = \sum_{F \in \csf} x^F q^{\quinv(F)} 
\end{equation}

    Equations~\eqref{eq:wlinv}-\eqref{eq:wlquinv} suggest that there should be natural bases of $\wloc$ indexed by $\csf$, comprising homogeneous elements whose grades are encoded by the inv or quinv statistics. One of the aims of this paper is to construct such bases; in Proposition~\ref{prop:clbasis-csf-Et}, we use the work of Chari-Loktev (described below) to obtain monomial bases of $\wloc$ specified natively in terms of $\csf$, with precisely the above properties. 

Besides its realization as a fusion product, $\wloc$ also admits a presentation, with a single generator and a concise list of relations \cite[Definition 1.2.1]{ChariLoktev-original}. This vantage point allowed Chari and Loktev to construct a remarkable PBW type basis of $\wloc$ \cite[Theorem 2.1.3]{ChariLoktev-original}. The Chari-Loktev (CL) basis has branching properties for the restriction $\curr[n-1] \subseteq \curr$ that are akin to those enjoyed by the classical Gelfand-Tsetlin basis (of irreducible representations of $\sln$) for the restriction $\sln[n-1] \subseteq \sln$. 

The Chari-Loktev indexing set was later reformulated in \cite{RRV-CLpop} as pairs $(T,\Lambda)$ comprising a Gelfand-Tsetlin pattern $T$ and a tuple $\Lambda$ of partitions whose Young diagrams ``fit" into rectangular boxes of appropriate sizes at each node of $T$. This {\em partition overlaid pattern} (POP) model yields a visual interpretation of yet another monomial expansion of the \qw polynomial \cite[Proposition 2.1.4]{ChariLoktev-original},\cite[(7.13$'$)]{MacMainBook} - the so-called {\em fermionic formula} (see equation~\eqref{eq:fermpop} and Figure~\ref{fig:gt-ssyt} below). The key feature of the POP model is the existence of a natural combinatorial branching map (see equation~\eqref{eq:brdef}):
\[ \pop \to \bigsqcup_{\mu \prec \lambda} \pop[\mu] \]
that encodes the branching property of the CL basis. 

Finally, in addition to its characterizations in terms of generators-relations  and as a fusion product, $\wloc$ can also be realized as a Demazure module, as was alluded to previously.  This perspective uncovers another facet of these modules - that  there is a chain of inclusions 
\[ \wloc \hookrightarrow \wloc[\lambda+\theta] \hookrightarrow \cdots \hookrightarrow \wloc[\lambda+k\theta] \hookrightarrow \cdots \]
where $\theta$ denotes the highest root of $\sln$ \cite{fourier-littelmann,RRV-CLpop}. This follows from the fact that the affine Weyl group element corresponding to the extremal generator of the Demazure module $\wloc[\lambda+k\theta]$ increases in Bruhat order as $k$ increases. The direct limit of this chain is the entire module $L(\Lambda_0)$ of $\afsl$. This yields a (subtle) notion of combinatorial direct limits for partition overlaid patterns, formulated in \cite[\S 5]{RRV-CLpop}, with 
\[\varinjlim_k \pop[\lambda+k\theta]\] giving a parameterizing set for the character of $L(\Lambda_0)$. 

In contrast to the POP model, the CSF model considered earlier has no fruitful notion thus far of branching or direct limits. It remains, in this sense, a model fundamentally rooted in the finite (i.e., in $\sln$), albeit one that is augmented with $q$-grading data (via inv or quinv). The primary goal of this paper, and arguably its core novelty, lies in the precise formulation of branching and direct limits for column strict fillings.
Our {\em splice} and {\em dsplice} operations (section~\ref{sec:probrancsf}) provide the branching formalism for column strict fillings, while the sequence of injective maps in equation~\eqref{eq:csfdl} enables passage to the direct limit. The splice operation has many novel features of independent interest.

Our main theorem (Theorem~\ref{thm:mainthm}) constructs two remarkable bijections from $\csf$ to $\pop$, one each for inv and quinv, which respect all structures - the $x$-weights, $q$-weights and the branching and projection maps of the CSF and POP models.
Along the way, we will have occasion to resurrect a close cousin of {\em inv} and {\em quinv}, first formulated by Zelevinsky\footnote{in the appendix he wrote to the Russian translation of Macdonald's monograph on Symmetric Functions. This statistic is denoted `ZEL' by Kirillov in \cite{Kirillov_newformula}.} in 1984. This statistic, which we term {\em refinv}, plays a central role in our proofs.

As a corollary to our main theorem, we establish that the sum $(inv+quinv)$ is constant on fibers of the natural projection map. More surprisingly, when inv and quinv are viewed as statistics on partition overlaid patterns  via our bijections, they turn out to be mutually related by the classical {\em box-complementation} involution of the POP model. This partially answers a question posed by Ayyer-Mandelshtam-Martin \cite{AMM} on constructing a bijection between the inv and quinv models that leaves the entries of each row of the filling invariant (Corollary \ref{cor:bijamm}). 

As mentioned earlier, Theorem~\ref{thm:mainthm} allows us to repurpose the Chari-Loktev basis into an elegant basis that is native to the CSF formalism (Proposition~\ref{prop:clbasis-csf-Et}). 
We also use our bijections in conjunction with the results of \cite{RRV-CLpop} to construct direct limits and to thereby obtain a new character formula for $L(\Lambda_0)$ in terms of column strict fillings (Proposition ~\ref{prop:charcsf}, Corollary~\ref{cor:kthetal}).

Finally, we make connections to the {\em coloured lattice paths} model of \qw polynomials\footnote{More generally, for the modified Macdonald polynomial.} stemming from work of Garbali-Wheeler \cite{GarbaliWheeler}, Borodin-Wheeler \cite{borodin-wheeler} and Wheeler \cite{wheeler-lectures}. We show that a refined counting of intersections and non-intersections of paths furnishes a succinct visual representation of our main theorem - a  ``proof without words" of sorts - that provides a simultaneous readout of both bijections. 

The results of this paper have parallels for the modified Hall-Littlewood polynomials as well. These, together with their quasisymmetric generalizations, will be the subject of future work.

The paper is organised as follows. After the preliminaries in sections \ref{sec:modmac}-\ref{sec:pop-qbinom}, we formulate our projection (rowsort) and branching (dsplice) operations for column strict fillings in section \ref{sec:probrancsf}. Section~\ref{sec:mainresults} contains the statements of our main results. Sections~\ref{sec:qtrip} and \ref{sec:invreftrip} define the two bijections and establish their basic properties, while section~\ref{sec:compf} completes the proof of Theorem~\ref{thm:mainthm} and Proposition~\ref{prop:clbasis-csf-Et}. Section~\ref{sec:dirlim} constructs the direct limit in the CSF model and derives a new character formula for $L(\Lambda_0)$. Finally, section~\ref{sec:lattpathwheeler} interprets our main theorems via the coloured lattice paths formalism.

\section{The modified Macdonald polynomial $\htl$}\label{sec:modmac}
Given a partition $\lambda=(\lambda_1 \geq \lambda_2 \geq \cdots)$, we will draw its Young diagram $\dg(\lambda)$ following the English convention, as a left-up justified array of boxes, with $\lambda_i$ boxes in the $i$th row from the top. The boxes are called the cells of $\dg(\lambda)$; each cell $c$ is assigned a co-ordinate $(c_1,c_2)$ where $c_1$ is the row index counting from the top and $c_2$ is the column index counting from left.
We let $|\lambda|:=\sum_i \lambda_i$.

\subsection{} Fix $n \geq 1$ and let $\scf$ denote the set of all maps (``fillings'') $F: \dg(\lambda) \to [n]$ where $[n]=\{1, 2, \cdots, n\}$. We will sometimes use $\dg(F)$ to denote $\dg(\lambda)$ for $F \in \scf$. If the values of $F$ strictly increase as we move down a column, we say $F$ is a {\em column strict filling} (CSF) and denote the set of such fillings by $\csf$. Let $X_n$ denote the tuple of indeterminates $x_1, x_2, \cdots, x_n$. The $x$-weight of a filling $F$ is the monomial $$x^F:=\displaystyle\prod_{c \in \dg(\lambda)} x_{F(c)}.$$

Given a cell $u=(i,j)$ of $\dg(\lambda)$ with $i>1$, we let $\up[u]$ denote the cell immediately above $u$, i.e., with coordinates 
$(i-1, j)$. 
A \textit{descent} of a filling $F \in \scf$ is a cell $u$ (not in the first row) such that $F(u) > F(\up[u])$.
 Define
\begin{equation}
\label{eq 1.0.5}
\Des(F) = \{u \in \dg(\lambda) : u \text{ is a descent of } F\}.
\end{equation}

\subsection{}
Given cells $u, v$ of $\dg(\lambda)$, we say that $u$ {\em Inv-attacks} $v$ if
\begin{enumerate}
\item they are in the same row, with $u$ strictly to the left of $v$, or
\item they are in consecutive rows, with $u$ occurring in the row below $v$, in a column that is strictly to the right of the column of $v$.
\end{enumerate}
An \textit{inversion} of $F \in \scf$ is a pair of cells $(u,v)$ in $\dg(\lambda)$ such that $u$ Inv-attacks $v$ and $F(u) > F(v)$. 
%when we read the cells of $\dg(\lambda)$ from left to right along each row, Taking the rows from bottom to top. 
Define
\begin{equation}
\label{eq1.0.6}
\Inv(F) = \{ (u, v): (u,v) \text { is an inversion of } F\}
\end{equation}
\begin{equation}
\label{eq:inveq}
\inv(F) = |\Inv(F)| - \sum_{u \in \Des(F)} \arm(u),
\end{equation} 
where ${\arm(u)}$ denotes the number of cells of $\dg(\lambda)$ strictly to the right of $u$ in the same row. For later use, we also define the coarm of a cell $u \in \dg(\lambda)$ - this is the number of cells of $\dg(\lambda)$ strictly to the left of $u$ in the same row.

\subsection{} %Do likewise for quinv.
Analogously, a cell $u \in \dg(\lambda)$ {\em Quinv-attacks} $v \in \dg(\lambda)$ if
\begin{enumerate}
\item $u$ is to the right of $v$ in  the same row, or 
\item they are in consecutive rows, with $u$ occurring in the row below $v$, in a column that is strictly to the left of the column of $v$.
\end{enumerate}

The attack relations are illustrated below.
\begin{align}\label{eq:attackrelspic}
	\ydiagram[*(gray)]{2,3+3}*[*(white) u]{0,2+1}*[*(white)]{7,6} \qquad \qquad \qquad & \qquad \qquad \ydiagram[*(gray)]{3+4,2}*[*(white) u]{0,2+1}*[*(white)]{7,6}
	\\
	\hbox{boxes Inv-attacked by $u$} \qquad \qquad & \qquad \hbox{ boxes Quinv-attacked by $u$}
	\nonumber
\end{align}

A \textit{queue-inversion} \cite{AMM} of $F \in \scf$ is a pair of cells $(u,v)$ in $\dg(\lambda)$ such that $u$ Quinv-attacks $v$ and  $F(u) > F(v)$. 
Define
\begin{equation}
%\label{eq1.0.6}
\mathrm{Quinv}(F) = \{ (u, v): (u,v) \text { is a queue-inversion of } F\}
\end{equation}
\begin{equation}
%\label{eqeq:inveq}
\quinv(F) = |\mathrm{Quinv}(F)| - \sum_{u \in \Des(F)} \arm(\up[u]).
\end{equation} 
We note that Ayyer-Mandelshtam-Martin first define quinv in terms of the so-called {\em quinv-triples} (see \S\ref{sec:qtrip} below); the above definition of $\quinv$ occurs as equation~(16) of \cite{AMM}.

\subsection{} %Define maj.

For a box $u$ in $\dg(\lambda)$ let $\mathrm{leg}(u)$ be the number of boxes in the same column strictly below $u$. For a filling $F \in \scf$, define \cite{HHL-I}: 
\begin{equation}\label{eq:majdef}
    \maj(F) = \sum_{u \in \Des(F)} (\mathrm{leg}(u)+1).
\end{equation} 
It follows that the largest value that $\maj$ can take is 
\[n(\lambda) := \sum_{j \geq 1} \binom{\lambda'_j}{2}\] 
where $\lambda'_j$ are the parts of the partition conjugate  to $\lambda$. 
The following lemma follows directly from the definition of $\maj$ in \eqref{eq:majdef}.
\begin{lemma}\label{lem:majminmax}
  Let $F \in \scf$. Then $\maj(F) = n(\lambda)$ if and only if $F \in \csf$. \qed
\end{lemma}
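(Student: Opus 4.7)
The plan is to establish that $\maj(F)$ is bounded above by $n(\lambda)$, with equality precisely characterizing column-strictness. First I would observe that $\Des(F)$ is always a subset of the set of non-first-row cells of $\dg(\lambda)$, since the descent condition is only defined for cells possessing a cell immediately above. This gives the estimate
\[ \maj(F) \;=\; \sum_{u \in \Des(F)} (\mathrm{leg}(u)+1) \;\leq\; \sum_{\substack{u \in \dg(\lambda) \\ u \text{ not in first row}}} (\mathrm{leg}(u)+1). \]

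Next I would evaluate the right-hand side column by column. In column $j$, the cells from top to bottom have legs $\lambda'_j - 1, \lambda'_j - 2, \ldots, 1, 0$; discarding the topmost cell and adding one, the values $\mathrm{leg}(u)+1$ that remain are $\lambda'_j - 1, \lambda'_j - 2, \ldots, 1$, which sum to $\binom{\lambda'_j}{2}$. Summing over $j$ gives exactly $n(\lambda)$, confirming the upper bound $\maj(F) \leq n(\lambda)$.

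It then remains to identify when equality holds. Since each summand $\mathrm{leg}(u)+1$ is strictly positive, equality in the displayed inequality forces $\Des(F)$ to coincide with the full set of non-first-row cells. Unwinding the definition, this says $F(u) > F(\up[u])$ for every cell $u$ not in the first row, which is exactly the column-strict condition defining $\csf$. Conversely, if $F \in \csf$ then every non-first-row cell is a descent by definition, and the computation above gives $\maj(F) = n(\lambda)$. This completes the plan; there is no real obstacle, as the argument is essentially a bookkeeping observation about the fixed contribution each column makes when every eligible cell contributes to the descent sum.
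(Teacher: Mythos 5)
Your proof is correct and follows exactly the reasoning the paper intends: the paper states the bound $\maj(F) \leq n(\lambda)$ via the same column-by-column computation of $\sum (\mathrm{leg}(u)+1)$ over non-first-row cells, and leaves the equality case (descent set equals all non-first-row cells $\iff$ column-strictness) as immediate from the definitions. Your write-up simply makes this bookkeeping explicit.
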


\subsection{} We recall that the modified Macdonald polynomial $\htl$ is a symmetric polynomial in the $x_i$ with $\naturals[q,t]$ coefficients \cite{HHL-I}. Following Haglund-Haiman-Loehr \cite{HHL-I} and Ayyer-Mandelshtam-Martin \cite{AMM}, it has the following monomial expansions in terms of the statistics {\em inv, quinv} and {\em maj} on $\scf$:
\begin{equation}\label{eq:invquinvmaj}
\htl = \sum_{F \in \scf} x^F q^{v(F)} t^{\maj(F)}
\end{equation}
where $v \in \{\inv, \quinv\}$.

\section{Monomial expansions of the \qw polynomial}
\subsection{inv and quinv expansions}
The \qw polynomial $\wl$ is the coefficient of the highest power of $t$ in the $t$-expansion below \cite[(3.1)]{bergeron2020survey}:
\begin{equation}\label{eq:berg1}
  \htl = \mathcal{H}_\lambda(X_n; q) \,t^0 + \cdots + \wl \, t^{n(\lambda)}.
  \end{equation}
  It also coincides with the specialization of the Macdonald polynomial $P_\lambda(X_n, q, t)$ at $t=0$ \cite{bergeron2020survey}.
Putting together \eqref{eq:invquinvmaj}, \eqref{eq:berg1} and Lemma~\ref{lem:majminmax}, we obtain for $v \in \{\inv,\quinv\}$:
\begin{equation}\label{eq:wlinvquinv}
  \wl = \sum_{F \in \csf} x^F q^{v(F)} 
\end{equation}
Since the $\wl$ is in fact symmetric in the $x$-variables,  \eqref{eq:wlinvquinv} can be viewed as an expansion of $\wl$ in terms of the monomial symmetric functions in $x_1, x_2, \cdots, x_n$.

\subsection{Gelfand-Tsetlin patterns}\label{sec:fermfor}

Let $n \geq 1$ and $\lambda = (\lambda_1 \geq \lambda_2 \geq \cdots \geq \lambda_n \geq 0)$ be a partition with at most $n$ nonzero parts. We now recall the definition of Gelfand-Tsetlin (GT) patterns. Consider a tuple $T = \left(T^j_i: \,1 \leq i \leq j \leq n\right)$  of integers, arranged in a triangular array as in Figure~\ref{fig:gt-ssyt}. 

We define the North-East and South-East differences of $T$ by:
\begin{equation}\label{eq:ne-se-def}
\NE_{ij}(T) = T^{j+1}_i - T^j_i \text{ ; }\SE_{ij}(T)= T^j_i - T^{j+1}_{i+1}
\end{equation}
for $1 \leq i \leq j < n$. For convenience, we let $T^j_{j+1}:=0$ for all $1 \leq j \leq n$ and use this in \eqref{eq:ne-se-def} to extend the definitions of $\NE_{ij}(T)$ and $\SE_{ij}(T)$ to the case $i=j+1$. 

We say that the tuple $T$ is a \textit{Gelfand-Tsetlin pattern} with bounding row (or shape) $\lambda$ if:
\begin{enumerate}
\item $\NE_{ij}(T) \geq 0$ and $\SE_{ij}(T)  \geq 0$ for $1 \leq i \leq j < n$, and
\item $T_i^n=\lambda_i$ for $1\leq i \leq n$.
\end{enumerate} 
Let $\GT$ denote the set of GT patterns of shape $\lambda$.
As is customary, we will interchangeably think of a GT pattern as a semistandard Young tableau (SSYT). In this perspective, $T^j_i$ is the number of cells in the $i^{th}$ row of the associated tableau which contain entries $\leq j$. 
Thus, $\NE_{ij}(T)$ is the number of cells in the $i^{th}$ row of the tableau which contain the entry $j+1$. Letting $\SSYT$ denote the set of semistandard Young tableaux of shape $\lambda$, we identify $\SSYT \cong \GT$ via the above.

For $T \in \GT$, let $x^T$ denote the $x$-weight of the corresponding tableau. We thus have:
\begin{equation}\label{eq:schurpoly}
    s_\lambda(X_n) = \sum_{T \in \GT} x^T
\end{equation}
where $s_\lambda(X_n)$ is the Schur polynomial. 

\subsection{Fermionic formula for $\wl$} The following ``fermionic" formula (i.e., one involving sums of products of $q$-binomials) for the \qw polynomial appears in \cite{HKKOTY, Kirillov_newformula} and follows readily by specializing a more general formula \cite[Chap VI, (7.13)$^\prime$]{MacMainBook} for the monomial expansion of the Macdonald $P$-function:
\begin{equation}\label{eq:ferm}
  \wl = \sum_{T \in \GT} x^T \prod_{1 \leq i \leq j <n}\qbinom{\NE_{ij}(T) + \SE_{ij}(T)}{\NE_{ij}(T)},
\end{equation}
where $\qbinom{\cdot}{\cdot}$ denotes the $q$-binomial coefficient. Following \cite{karpthomas}, we define 
\[\wt_q(T) := \displaystyle\prod_{1 \leq i \leq j <n}\displaystyle\qbinom{\NE_{ij}(T) + \SE_{ij}(T)}{\NE_{ij}(T)}.\]

\section{Partition overlaid patterns}\label{sec:pop-qbinom}
\subsection{} We recall that a partition $\gamma$ is said to {\em fit} into a rectangle of size $k \times \ell$ if $\gamma$ has at most $k$ nonzero parts, each of which is $\leq \ell$.
As is well-known, the $q$-binomial coefficient $\qbinom{k+\ell\,}{k}$ is the generating function of partitions that fit into a $k \times \ell$ rectangle:  
\begin{equation}\label{eq:qbindef}
\qbinom{k+\ell}{k} = \sum_\gamma q^{|\gamma|},
\end{equation}
where $\gamma = (\gamma_1 \geq \gamma_2\geq \cdots \geq\gamma_k \geq 0)$ with  $\ell \geq \gamma_1$. 
Let $A_{k\ell}$ denote the set of such partitions $\gamma$.
For later use, we also introduce the set $B_{k\ell}$ comprising the strictly decreasing $k$-tuples \[a=(a_1>a_2> \cdots > a_k \geq 0), \text{ with }k+\ell-1 \geq a_1.\] 
One sees readily that $A_{k\ell}$ and $B_{k\ell}$ are in bijection via:
\begin{equation}\label{eq:gamma-a}
  \gamma \to a \;\; \text{ where } \gamma_p  = a_p - (k -p) \text{ for all } 1 \leq p \leq k
  \end{equation}
As shown in \cite{RRV-CLpop}, the right-hand side of \eqref{eq:ferm} can be interpreted in terms of {\em partition overlaid patterns}. We recall the definition:

\begin{definition}\label{def:popdef}
  A partition overlaid pattern (POP) of shape $\lambda$ is a pair $(T, \Lambda)$ where $T \in \GT$ and $\Lambda=(\Lambda_{ij}: 1 \leq i \leq j <n)$ is a tuple of partitions such that each $\Lambda_{ij}$  fits into a rectangle of size $\NE_{ij}(T) \times \SE_{ij}(T)$. We let $|\Lambda| = \sum_{i,j} |\Lambda_{ij}|$.
\end{definition}
  For example, if $T$ is the GT pattern of Figure~\ref{fig:gt-ssyt}, we could take $\Lambda_{11} = (2,1,0),\;  \Lambda_{12} = (2),\;  \Lambda_{13} =(1,1),\;  \Lambda_{22} = (0,0,0) ,\; \Lambda_{23} = (1), \; \Lambda_{33} = (2,2)$. We imagine the $\Lambda_{ij}$ as being placed in a triangular array as in Figure~\ref{fig:gt-ssyt}.
We let $\pop$ denote the set of partition overlaid patterns of shape $\lambda$.
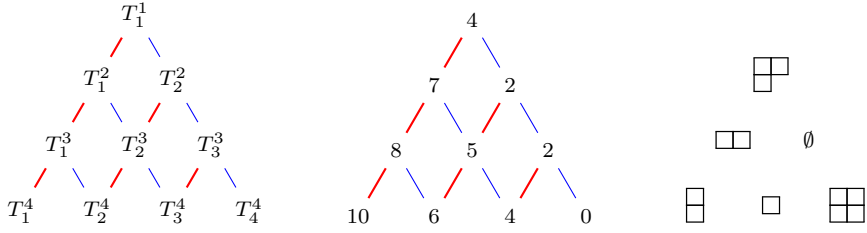
\begin{figure}
  \begin{center}
\begin{tikzpicture}[x={(1cm*0.5,-\rootthree cm*0.5)},y={(1cm*0.5,\rootthree cm*0.5)}]
  \foreach\i in{0,...,3}
\foreach\j in{\i,...,3}{
  \pgfmathtruncatemacro{\k}{4 - \j + \i};
  \pgfmathtruncatemacro{\l}{\i + 1};
  \draw(\i,\j)node(a\i\j){$\scriptstyle{T^{\k}_{\l}}$};
}
  \foreach\i/\ii in{0/-1,1/0,2/1,3/2}
 \foreach\j/\jj in{0/-1,1/0,2/1,3/2}{
  \ifnum\i<\j \draw[color=red,thick](a\i\jj)--(a\i\j); \fi
  \ifnum\i>\j\else\ifnum\i>0 \draw[color=blue](a\ii\j)--(a\i\j);\fi\fi
 }
\end{tikzpicture}\qquad
\begin{tikzpicture}[x={(1cm*0.5,-\rootthree cm*0.5)},y={(1cm*0.5,\rootthree cm*0.5)}]
  \draw(0,0)node(a00){$\scriptstyle{10}$};\draw(0,1)node(a01){$\scriptstyle{8}$};\draw(0,2)node(a02){$\scriptstyle{7}$};\draw(0,3)node(a03){$\scriptstyle{4}$};
  \draw(1,1)node(a11){$\scriptstyle{6}$};\draw(1,2)node(a12){$\scriptstyle{5}$};\draw(1,3)node(a13){$\scriptstyle{2}$};
  \draw(2,2)node(a22){$\scriptstyle{4}$};\draw(2,3)node(a23){$\scriptstyle{2}$};
  \draw(3,3)node(a33){$\scriptstyle{0}$};
  \foreach\i/\ii in{0/-1,1/0,2/1,3/2}
 \foreach\j/\jj in{0/-1,1/0,2/1,3/2}{
  \ifnum\i<\j \draw[color=red,thick](a\i\jj)--(a\i\j); \fi
  \ifnum\i>\j\else\ifnum\i>0 \draw[color=blue](a\ii\j)--(a\i\j);\fi\fi
}
\end{tikzpicture}
%% \qquad
%% \begin{tikzpicture}
%% \ytableausetup{mathmode, boxsize=0.3em}
%%   \draw(0,0) node {
%% \ytableaushort{
%%   1 1 1 1 2 2 2 3 4 4, 
%%   2 2 3 3 3 4,
%%   3 3 4 4}};
%% \end{tikzpicture}
\qquad
\begin{tikzpicture}[x={(1cm*0.5,-\rootthree cm*0.5)},y={(1cm*0.5,\rootthree cm*0.5)}]
    \ytableausetup{mathmode, boxsize=0.5em}
  \draw(0,0)node {\ydiagram{1,1}};\draw(0,1)node{\ydiagram{2}};\draw(0,2)node{\ydiagram{2,1}};
  \draw(1,1)node {\ydiagram{1}};\draw(1,2)node{$\scriptstyle{\emptyset}$};
  \draw(2,2)node {\ydiagram{2,2}};
\end{tikzpicture}
\caption{A GT pattern for $n=4$. The NE and SE differences are those along the red and blue lines. On the right is a partition overlay compatible with this GT pattern.}
\label{fig:gt-ssyt}
\end{center}
\end{figure}
%

%\noindent
It is now clear from \eqref{eq:ferm}, \eqref{eq:qbindef} and Definition~\ref{def:popdef} that for all $T \in \GT$,
\begin{equation}\label{eq:popfiber}
    \sum_{\Lambda:\; (T,\Lambda) \in \pop} q^{|\Lambda|} = \wt_q(T)
\end{equation} 
and hence 
\begin{equation}\label{eq:fermpop}
  \wl = \sum_{(T, \Lambda) \in \pop} x^T q^{|\Lambda|}.
  \end{equation}

\subsection{Projection and Branching for Partition overlaid patterns}
\begin{definition} 
Given $\lambda = (\lambda_1 \geq \lambda_2 \geq \cdots \geq \lambda_n \geq 0)$, we say that $\mu = (\mu_1, \mu_2, \cdots, \mu_{n-1})$ {\em interlaces} $\lambda$ (and write $\mu \prec \lambda$) if $\lambda_i \geq \mu_i \geq \lambda_{i+1}$ for $1 \leq i <n$. 
\end{definition}
Recall that the GT inequalities \eqref{eq:ne-se-def} imply that each row of a GT pattern interlaces the one below, i.e., if $T \in \GT$, then $(T^j_i: \,1 \leq i \leq j)$ interlaces $(T^{j+1}_i: \, 1 \leq i \leq j+1)$  for each $1 \leq j <n$.

The \qw polynomials have the following important properties, which can be deduced for instance by appropriately specializing $(7.9)'$ and $(7.13)'$  of \cite[Chapter VI]{MacMainBook}: 

\smallskip
\noindent
Projection:
\begin{equation}\label{eq:projdefn}
\wl[q=0] = s_\lambda(X_n).
\end{equation}
Branching:
\begin{equation}\label{eq:branch}
W_\lambda(X_n; q) = \sum_{\mu \prec \lambda} \prod_{1 \leq i <n} \qbinom{\lambda_i-\lambda_{i+1}}{\lambda_i-\mu_i} \; W_\mu(X_{n-1}; q) \; x_n^{|\lambda|-|\mu|}
\end{equation}
Both these also follow readily from \eqref{eq:schurpoly} and \eqref{eq:fermpop}.

\begin{definition}
    We define {\em combinatorial projection} to be the map
\begin{align}
  \pr: \pop &\to \GT \notag\\
  \pr(T,\Lambda) &= T\label{eq:prdef}
\end{align}
and {\em combinatorial branching} to be the map
\begin{align}
  \br: \pop &\to \bigsqcup_{\mu \prec \lambda} \pop[\mu]\notag\\
  \br(T,\Lambda) &= (T^\dag, \Lambda^\dag)\label{eq:brdef}
\end{align}
where the GT pattern $T^\dag$ is obtained from $T \in \GT$ by deleting its bottom-most row, and $\Lambda^\dag$ is obtained from $\Lambda$ by deleting the overlays $\Lambda_{ij}$ with $j=n-1$.
%, i.e., those straddling the bottom two rows of $T$. 
\end{definition}
\subsection{Box complementation} In addition to the combinatorial projection and branching maps, $\pop$ is endowed with another important map, which we term {\em box complementation}.\footnote{We are indebted to Reiner-Shimozono \cite{reiner-shimozono-percentage} for this terminology, though it has a slightly different connotation in their context.} Observe that for a partition $\pi = (\pi_1 \geq \pi_2 \geq \cdots \geq \pi_k \geq 0)$ fitting into a $k \times \ell$ rectangle, we may consider its complement in this rectangle, defined by $\pi^c = (\ell-\pi_k \geq \ell-\pi_{k-1} \geq \cdots \geq \ell-\pi_1)$ (Figure~\ref{fig:boxcomp}).
\begin{figure}
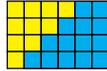

    \centering
    \ydiagram[*(cyan)]
  {4+2,3+3,2+4,1+5}
*[*(yellow)]{6,6,6,6}
    \caption{Box-complementation.}
    \label{fig:boxcomp}
\end{figure}
Clearly:
\begin{equation}\label{eq:pipic}
|\pi| + |\pi^c| = k\ell.
\end{equation}
Now, for $(T,\Lambda) \in \pop$, define
\begin{equation}\label{eq:bcdef}
\bcomp(T,\Lambda)=(T,\Lambda^c) \in \pop
\end{equation}
where for each $i,j$, $(\Lambda^c)_{ij}$ is defined to be the complement of $\Lambda_{ij}$ in its bounding rectangle of size $\NE_{ij}(T) \times \SE_{ij}(T)$. 
We note that $(T,\Lambda)$ and its box complement have the same $x$-weight, but not necessarily the same $q$-weight, since $|\Lambda|\neq |\Lambda^c|$ in general. Following  \cite{RRV-CLpop}, we define  \begin{equation}\label{eq:areadef}
\area(T) := \sum_{1 \leq i \leq j < n} \NE_{ij}(T)\SE_{ij}(T), \;\;\;\text{ for } T \in \GT. 
\end{equation}
It follows from \eqref{eq:pipic}, \eqref{eq:bcdef} and \eqref{eq:areadef} that for all $(T,\Lambda) \in \pop$,
\begin{equation}\label{eq:areaprop}
    |\Lambda| + |\Lambda^c| = \area(T).
\end{equation}
The definition of the combinatorial branching map in \eqref{eq:brdef} readily implies the following equality of maps from $\pop \to \bigsqcup_{\mu \prec \lambda} \pop[\mu]$:
\begin{equation}\label{eq:boxbr}
  \br \circ \bcomp = \bcomp \circ \br
  \end{equation}
\subsection{Chari-Loktev bases of local Weyl modules}\label{sec:locweyl}
We recall from the introduction that the \qw polynomial $\wl$ is the graded character of the {\em local Weyl module} $W_{\mathrm{loc}}(\lambda)$ - a finite-dimensional cyclic module of the current Lie algebra $\mathfrak{sl}_n[t] :=\mathfrak{sl}_n\otimes \mathbb{C}[t]$ \cite{ChariIon-BGG,ChariLoktev-original}. Further, $\pop$ indexes a special basis of this module with branching properties akin to the classical Gelfand-Tsetlin basis of irreducible $\mathfrak{gl}_n$-modules \cite{ChariLoktev-original,RRV-CLpop}. We briefly recall the definition of this basis of Chari and Loktev, in the formulation of \cite[\S 4.6]{RRV-CLpop}. We will use this later on in Proposition~\ref{prop:clbasis-csf-Et}.

Let $\nminus$ denote the set of strictly lower triangular matrices in $\sln$, and let $E_{pq} \in \nminus$  be the elementary matrix (with a $1$ in precisely the $(p,q)^{th}$ entry for $n \geq p>q \geq 1$). Consider the Lie algebra $\nminus[t]=\nminus \otimes \complex[t]$. To each $\mfp = (T,\Lambda) \in \pop$, we associate a monomial  $\CL(\mfp) \in \uea\nminus[t]$ (the universal enveloping algebra of $\nminus[t]$) as follows: 
\begin{enumerate}
    \item For $1 \leq i \leq j <n$, let $\lij(\mfp)$ be the product of $\left(E_{j+1, \,i} \otimes t^p\right)$ as $p$ runs over all parts of the partition  $\Lambda_{ij}$, i.e.,
    if $\Lambda_{ij} = (\Lambda_{ij}(1) \geq \Lambda_{ij}(2) \geq \cdots \geq \Lambda_{ij}(k) \geq 0)$ with $k=\NE_{ij}(T)$, then 
    \begin{equation}\label{eq:lij1}
        \lij(\mfp) = \prod_{m=1}^{\NE_{ij}(T)} \left(E_{j+1, \,i} \otimes t^{\,\Lambda_{ij}(m)}\right).
        \end{equation}
    We note that since the factors commute, the order is immaterial.
    \item For $1 \leq j <n$, define \begin{equation}\label{eq:lij2} \lij[j](\mfp) := \prod_{i=1}^j \lij(\mfp).\end{equation} Here again the factors commute.
    \item Finally, define the (ordered) product:
    \begin{equation}\label{eq:clbasis-mon}
\CL(\mfp) := \lij[1](\mfp) \,\lij[2](\mfp)\cdots \lij[n-1](\mfp).
    \end{equation}
\end{enumerate}
We call $\CL(\mfp) \in \uea\nminus[t]$ the {\em Chari-Loktev monomial} associated to $\mfp \in \pop$. We have the following key theorem \cite{ChariLoktev-original, RRV-CLpop}: 
\begin{thm}\label{thm:clthm} (Chari-Loktev)
Let $\lambda$ be a partition with at most $n$ nonzero parts. Let $w_\lambda$ denote the cyclic generator of the local Weyl module $\wloc$ of $\curr$. Then the set
\[ \{ \CL(\mfp) \,w_\lambda: \mfp \in \pop \} \]
is a homogeneous basis of $\wloc$. Further, if $\mfp=(T,\Lambda)$, the element $CL(\mfp) \,w_\lambda$ has $\sln$-weight $x^T$ and $q$-grade $|\Lambda|$.  \qed
\end{thm}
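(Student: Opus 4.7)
My plan is to prove the theorem in three stages: compute the weight and grade of each $\CL(\mfp)\,w_\lambda$ directly, show the CL monomials span $\wloc$ via the Chari--Loktev presentation, and finally upgrade spanning to a basis through a character count. The main obstacle will be the spanning step, where an arbitrary PBW monomial acting on $w_\lambda$ must be straightened into a CL monomial using the defining relations of $\wloc$.

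The weight and grade computation is bookkeeping. Each factor $E_{j+1,i} \otimes t^p$ has $\sln$-weight $\epsilon_{j+1}-\epsilon_i$ and $t$-grade $p$. Since the factors of $\lij(\mfp)$ mutually commute, I would sum grades and weights freely, obtaining total grade $\sum_{i,j,m} \Lambda_{ij}(m) = |\Lambda|$ and total weight shift $\sum_{1\leq i\leq j<n} \NE_{ij}(T)(\epsilon_{j+1}-\epsilon_i)$ relative to $w_\lambda$. Starting from weight $\lambda=(T^n_1,\ldots,T^n_n)$ and telescoping in $j$ (using $\NE_{ij}(T) = T^{j+1}_i - T^j_i$), the $k$-th coordinate collapses to $\sum_{i=1}^k (T^k_i - T^{k-1}_i)$, which is exactly the number of $k$'s in the SSYT attached to $T$. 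Hence the $\sln$-weight equals $x^T$.

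The spanning step is where the real work lies. By PBW, $\wloc$ is spanned by $u\cdot w_\lambda$ for arbitrary ordered monomials $u \in \uea\nminus[t]$, so the problem is to rewrite each such monomial as a $\complex$-linear combination of $\CL(\mfp)\,w_\lambda$. I plan to use the Chari--Loktev presentation: the commutation relations of $\nminus[t]$ together with the truncation (integrability) relations at $w_\lambda$ should yield a straightening algorithm built from two families of moves. First, reordering factors into the prescribed outer order $\lij[1]\lij[2]\cdots\lij[n-1]$ of \eqref{eq:clbasis-mon}, using commutators like $[E_{k,i} \otimes t^a,\,E_{j+1,k} \otimes t^b] = E_{j+1,i} \otimes t^{a+b}$ to convert short-root factors into longer-root factors absorbable into later blocks; second, applying truncation relations that force each overlay $\Lambda_{ij}$ to fit inside an $\NE_{ij}(T) \times \SE_{ij}(T)$ rectangle. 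The hardest part is designing a termination order (on factor type, $t$-exponent, and length) so that the algorithm provably halts and produces POP-indexed output; this is essentially Chari--Loktev's original combinatorial argument, recast in the POP language of \cite{RRV-CLpop}.

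Given spanning, linear independence is immediate from dimensions. The weight/grade assertion just established places $\CL(\mfp)\,w_\lambda$ in the $(x^T, q^{|\Lambda|})$-component of the graded character. The graded character of $\wloc$ equals $\wl$, which by \eqref{eq:fermpop} equals $\sum_{(T,\Lambda)\in\pop} x^T q^{|\Lambda|}$. Thus the number of CL monomials of each weight and grade equals the dimension of the corresponding graded-weight space, forcing the spanning set to be a basis and completing the proof.
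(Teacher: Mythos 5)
First, a point of reference: the paper does not prove this statement at all. It is attributed to Chari--Loktev, stated with a \qed, and imported from \cite{ChariLoktev-original} (in the reformulation of \cite{RRV-CLpop}). So any assessment here is of your reconstruction against the cited literature rather than against an argument in the paper. Your first stage is fine: the weight/grade bookkeeping is correct, and the telescoping of $\NE_{ij}(T)=T^{j+1}_i-T^j_i$ in $j$ does give the content of the tableau, hence $x^T$.

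The gap is that the spanning step --- which is the actual content of the theorem --- is not carried out. You describe two families of straightening moves and then state that designing a termination order ``is essentially Chari--Loktev's original combinatorial argument.'' That correctly locates the difficulty but does not resolve it: one must extract, from the defining relations of $\wloc$ (annihilation by the raising operators and by the positive part of the current algebra, plus the integrability relation on each $E_{j+1,i}\otimes 1$), the precise vanishing statements that confine each overlay $\Lambda_{ij}$ to an $\NE_{ij}(T)\times\SE_{ij}(T)$ box, and then exhibit a well-founded order under which the rewriting terminates. This is the delicate induction of \cite[Theorem 2.1.3]{ChariLoktev-original}, and without it the proposal is an outline, not a proof. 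Separately, your linear-independence step is circular as written: the identity ``graded character of $\wloc$ equals $\wl$,'' hence equals \eqref{eq:fermpop}, is in Chari--Loktev's own development a \emph{corollary} of this theorem. You can break the circle either by citing the independent Demazure-theoretic proofs of the character formula (Sanderson, Ion), or --- more economically, and as Chari--Loktev do --- by using only ungraded dimensions: spanning gives $\dim\wloc\le|\pop|=\dim\elp$, the surjection onto the fusion product \eqref{eq:fusion} gives $\dim\wloc\ge\dim\elp$, and a spanning set of cardinality equal to the dimension is a basis; the graded refinement then follows from your first step.
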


Now, equation~\eqref{eq:branch} specialized at $x_n=1$ can be viewed as an identity relating characters of local Weyl modules of $\mathfrak{sl}_n[t]$ and $\mathfrak{sl}_{n-1}[t]$. Chari and Loktev lift \eqref{eq:branch} to the level of modules, showing that  $W_{\mathrm{loc}}(\lambda)$ when restricted to $\mathfrak{sl}_{n-1}[t]$ admits a filtration whose successive quotients are of the form $W_{\mathrm{loc}}(\mu)$ for $\mu \prec \lambda$ \cite[Cor 2.2.5]{ChariLoktev-original}. Further, their graded multiplicities are precisely given by the products of $q$-binomial coefficients that appear in \eqref{eq:branch}.

\section{Projection and branching for Column strict fillings}\label{sec:probrancsf}
We seek to construct natural bijections between $\csf$ and $\pop$ which explain the equality of the monomial expansions \eqref{eq:wlinvquinv} and \eqref{eq:fermpop} for $v=\inv, \quinv$. In addition to preserving $x$- and $q$-weights, we require our bijections to be compatible with the combinatorial projection and branching maps. Towards this end, we first define these latter maps in the setting of column strict fillings. 

\subsection{Projection: rowsort} Given $F \in \csf$, let $\rsort(F)$ denote the filling obtained from $F$ by sorting entries of each row in ascending order from left to right (Figure~\ref{fig:rsort}).
\begin{figure}
    \centering
    \ytableausetup{mathmode, smalltableaux}  
    $F=$  \ytableaushort{4121,543,65} \qquad $\rsort(F)=$  \ytableaushort{1124,345,56}
    \caption{An example of the rowsort operation.}
    \label{fig:rsort}
\end{figure}
In light of the following easy lemma, we think of $\rsort$ as the projection map in the CSF setting.
\begin{lemma}
  If $F \in \csf$, then $\rsort(F) \in \SSYT\cong \GT$.
\end{lemma}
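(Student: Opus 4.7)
The plan is to check the two defining properties of an SSYT of shape $\lambda$ for $\rsort(F)$: weak increase along each row, and strict increase down each column. The first holds by the very definition of $\rsort$, so the entire content of the lemma is the column-strictness. Since strict increase down columns is a local condition between consecutive rows, I would reduce to fixing two adjacent rows, say rows $i$ and $i+1$ of $F$, with entries $a_1, a_2, \ldots, a_k$ and $b_1, b_2, \ldots, b_m$ respectively; because $\lambda$ is a partition, $m \leq k$, and the CSF hypothesis gives the column-strict inequalities $a_c < b_c$ for $1 \leq c \leq m$. Denote the sorted sequences by $a'_1 \leq \cdots \leq a'_k$ and $b'_1 \leq \cdots \leq b'_m$. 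The goal becomes $a'_j < b'_j$ for every $j \in \{1, \ldots, m\}$.

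The main step is a counting argument. Fix $j$ and set $v = b'_j$. Let $c_1, c_2, \ldots, c_j$ be the columns of row $i+1$ whose original entries land in the first $j$ positions after sorting; by construction $b_{c_\ell} \leq v$ for $1 \leq \ell \leq j$. Applying the column-strict inequality in each of these columns yields $a_{c_\ell} < b_{c_\ell} \leq v$, so row $i$ has $j$ distinct positions with entries strictly less than $v$. In the weakly increasing sequence $a'_1 \leq a'_2 \leq \cdots \leq a'_k$ this forces $a'_j < v$, since otherwise at most $j-1$ of the $a'$'s could be $< v$. This gives $a'_j < b'_j$ as desired, establishing the column-strict condition for $\rsort(F)$. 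Since this holds between every pair of adjacent rows, $\rsort(F)$ is an SSYT of shape $\lambda$.

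I do not anticipate any real obstacle; the argument is a standard order-statistic comparison. The only point requiring a little care is the strict-versus-weak bookkeeping, since row $i$ of $F$ may contain repeated entries while each column cannot. The proof handles this cleanly because the column-strictness $a_{c_\ell} < b_{c_\ell}$ is strict, which is exactly what is needed to upgrade the weak inequality $b_{c_\ell} \leq v$ to the strict conclusion $a'_j < v$.
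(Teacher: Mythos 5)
Your proof is correct and follows essentially the same route as the paper: both reduce the claim to comparing the $j$-th smallest entries of consecutive rows, using the columnwise strict inequalities $a_c < b_c$ to show that at least $j$ entries of row $i$ are strictly below $b'_j$. The paper packages this as an ``injective, strictly decreasing map of multisets'' $R_{i+1} \to R_i$, while you carry out the order-statistic count explicitly, but the underlying argument is identical.
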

 \begin{proof}
 For each $1 \leq i \leq \lambda'_1$, let $R_i$ denote the multiset (of cardinality $\lambda_i$) comprising the entries of the $i^{th}$ row of $F$. 
 Observe that the entry in the cell $(i,j)$ of $\rsort(F)$ is the $j^{th}$ smallest element of $R_i$. 
 Now consider the map $R_i \to R_{i-1}$ (for $i>1$) which sends $F(u) \mapsto F(\up[u])$ for each cell $u$ in the $i^{th}$ row of $F$.  Since $F$ is a column-strict filling of partition shape, this defines an injective, strictly decreasing map of multisets. This implies in particular that the $j^{th}$ smallest element of $R_i$ is greater than the $j^{th}$ smallest element of $R_{i-1}$ for each $1 \leq j \leq |R_i|$, in other words that $\rsort(F) \in \csf$. Since the entries of each row of $\rsort(F)$ are already weakly increasing, we conclude $\rsort(F) \in \SSYT$. 
 \end{proof}

\subsection{Towards branching: the elementary splice operation}\label{sec:elsplice}
A strictly increasing sequence $a=(a_1 < a_2 < \cdots < a_p)$ of positive integers (possibly empty) will be termed a {\em column tuple} with $\len(a):=p \geq 0$. It will be convenient to define $a_0:=0$ for any column tuple $a$. 

Let $0 \leq k < \ell $ and suppose 
\[\sigma=(\sigma_1 < \sigma_2 < \cdots < \sigma_{k}) \text{ and } \tau=(\tau_1 < \tau_2 < \cdots < \tau_{\ell})\] 
are column tuples of lengths $k$ and $\ell$ respectively. 
We now define the {\em elementary splice operation} on such pairs $(\sigma, \tau)$. Consider the set $\{1 \leq i \leq k+1: \sigma_{i-1} < \tau_i\}$. Since $\sigma_0=0$, it is non-empty. Let
\begin{equation}\label{def:mdef}
m := \max \,\{1 \leq i \leq k+1: \sigma_{i-1} < \tau_i\}.
\end{equation}
Define $\splice(\sigma,\tau) := (\barsig,\bartau)$ where
\begin{equation}\label{eq:spliceeq}
  \barsig_i = \begin{cases} \sigma_i & 1\leq i <m \\ \tau_i &m \leq i \leq \ell \end{cases} \;\;\;\;\;\;\; \text{ and } \;\;\;\;\;\; \bartau_i = \begin{cases} \tau_i & 1\leq i <m \\ \sigma_i & m \leq i \leq k \end{cases}
  \end{equation}
i.e., $\barsig,\bartau$ are obtained by swapping certain suffix portions of $\sigma, \tau$. By the choice of $m$, it directly follows that $\barsig$ is a column tuple and that
\begin{equation}\label{eq:splicecor}
\sigma_i \geq \tau_{i+1} > \tau_i \text{ for } m \leq i \leq k.
\end{equation}
This latter fact ensures that $\bartau$ is a column tuple as well.

We also have $\len(\barsig) = \len(\tau) = \ell$ and $\len(\bartau)=\len(\sigma) = k$. For instance, when  \ytableausetup{mathmode, smalltableaux}  \[(\sigma,\tau) = (\; \ytableaushort{1,5} \; ,\, \ytableaushort{2,3,4}\;), \text{ we get } (\barsig,\bartau) = (\; \ytableaushort{1,3,4}\; ,\, \ytableaushort{2,5}\;).\]

We extend the definition of $\splice$ to arbitrary pairs of column tuples $(\sigma,\tau)$ as follows:
\begin{enumerate}
    \item If $\len(\sigma) < \len(\tau)$, then $\splice(\sigma,\tau)$ is given by \eqref{eq:spliceeq}.
    \item If $\len(\sigma) > \len(\tau)$, then \begin{equation}\label{eq:extsplice}\splice(\sigma, \tau):= \omega(\splice (\tau,\sigma)) = \omega \circ \splice \circ \,\omega (\sigma,\tau)
    \end{equation}
    where $\omega$ is the ``swap" operator, mapping $(a,b) \mapsto (b,a)$.  
    \item If $\len(\sigma) = \len(\tau)$, then \begin{equation}\label{eq:equalitysplice}
    \splice(\sigma,\tau):=(\sigma, \tau).
    \end{equation}
\end{enumerate}

\begin{lemma} \label{lem_splice}
 Suppose $0 \leq k<\ell$ and $\sigma=(\sigma_1 < \sigma_2 < \cdots < \sigma_k)$ and $\tau=(\tau_1 < \tau_2 < \cdots < \tau_{\ell})$ are column tuples of length $k$ and $\ell$ respectively. If $\splice(\sigma,\tau) = (\barsig,\bartau)$, then:
 \[\max\,\{1 \leq i \leq k+1: \sigma_{i-1} < \tau_{i}\} = \max\, \{1 \leq i \leq k+1: \bartau_{i-1} < \barsig_{i}\}\] and $\splice(\bartau,\barsig)= (\tau,\sigma)$.
 \begin{proof}
     Let $m$ be the maximum of  $\{1 \leq i \leq k+1: \sigma_{i-1} < \tau_{i}\}$. It follows from the definition of $\splice$ and \eqref{eq:splicecor} that 
     \[ \bartau_{i-1} = \sigma_{i-1} \geq \tau_i =  \barsig_{i} \text{ for all } i > m.\] 
     Since $\barsig_{m} = \tau_{m}$ and $\bartau_{m-1} = \tau_{m-1}$, we have $\barsig_{m} > \bartau_{m-1}$. This implies that $m$ is also the maximum of $\{1 \leq i \leq k+1: \bartau_{i-1} < \barsig_{i}\}$. Therefore, $\splice(\bartau,\barsig)= (\tildetau,\tildesig)$, where
\[ \tildetau_i = \begin{cases} \bartau_i & 1\leq i <m \\ \barsig_i &m \leq i \leq \ell \end{cases} \;\;\;\;\;\;\; \text{ and } \;\;\;\;\;\; \tildesig_i = \begin{cases} \barsig_i & 1\leq i <m \\ \bartau_i &m \leq i \leq k \end{cases}\]
It follows from \eqref{eq:spliceeq} that $\tildesig =\sigma $ and $\tildetau = \tau$.
 \end{proof}
\end{lemma}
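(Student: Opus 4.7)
The plan is to verify both conclusions by unwinding the explicit formula~\eqref{eq:spliceeq} for $(\barsig, \bartau)$ in terms of the pivot index $m$ from~\eqref{def:mdef}, and then running the same procedure on the swapped pair $(\bartau, \barsig)$ to show that it selects the same pivot $m$ and inverts the suffix swap. Since the splice operation is defined by a piecewise prescription that swaps the portions of $\sigma, \tau$ beyond position $m$, one expects it to be a kind of involution; making this precise requires showing that the pivot is preserved.

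First I would compute the candidate pivot $m' := \max\,\{1 \leq i \leq k+1: \bartau_{i-1} < \barsig_i\}$ and establish $m' = m$ via two observations. For indices $i$ with $m < i \leq k+1$, the formula~\eqref{eq:spliceeq} gives $\bartau_{i-1} = \sigma_{i-1}$ (since $i-1 \in \{m, \ldots, k\}$) and $\barsig_i = \tau_i$ (since $i \in \{m+1, \ldots, \ell\}$). The maximality of $m$ in the original problem forces $\sigma_{i-1} \geq \tau_i$ for all such $i$, so $\bartau_{i-1} \geq \barsig_i$ and these indices are excluded from the new set. At $i = m$ itself, $\barsig_m = \tau_m$ and $\bartau_{m-1}$ equals $\tau_{m-1}$ (or $0$ when $m = 1$); the strict monotonicity of $\tau$ together with the convention $\tau_0 := 0$ yields $\bartau_{m-1} < \barsig_m$, so $m$ lies in the new set and is its maximum.

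With the pivot confirmed, I would invoke~\eqref{eq:spliceeq} for $(\bartau, \barsig)$ and verify piecewise that the output equals $(\tau, \sigma)$. On the pre-pivot range $1 \leq i < m$, no swap is performed and the two outputs take values $\bartau_i = \tau_i$ and $\barsig_i = \sigma_i$. On the post-pivot ranges ($m \leq i \leq \ell$ for the first output, $m \leq i \leq k$ for the second), the formula reads off the swapped suffixes of $\barsig$ and $\bartau$, which are by construction the original $\tau$-suffix and $\sigma$-suffix displaced in the first splice. The two pieces glue together to give $\tau$ and $\sigma$ exactly.

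I do not expect any conceptual obstacle; the entire argument is a careful bookkeeping exercise on the piecewise definition. The only place needing mild care is the boundary $m = 1$, where $\bartau_{m-1}$ refers to the $0$-padded entry, but the convention $\sigma_0 = \tau_0 = 0$ handles this uniformly.
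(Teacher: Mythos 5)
Your proposal is correct and follows essentially the same route as the paper's proof: you show the pivot $m$ is preserved by excluding all indices $i>m$ via the inequality $\sigma_{i-1}\geq\tau_i$ (the paper's equation~\eqref{eq:splicecor}) and including $i=m$ via $\bartau_{m-1}=\tau_{m-1}<\tau_m=\barsig_m$, then unwind the piecewise formula~\eqref{eq:spliceeq} to recover $(\tau,\sigma)$. Your explicit attention to the boundary case $m=1$ with the $0$-padding convention is a minor addition the paper leaves implicit.
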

As a direct consequence of \eqref{eq:extsplice} and Lemma~\ref{lem_splice}, we obtain:
\begin{corollary}\label{cor:corone}
$\splice$ is an involution on the set of all pairs of column tuples, i.e., $(\splice)^2$ is the identity.
\end{corollary}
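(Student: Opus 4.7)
The plan is to verify $\splice^2 = \mathrm{id}$ by a short case analysis on the comparison of $\len(\sigma)$ and $\len(\tau)$, using Lemma~\ref{lem_splice} as the substantive input and the extension clauses \eqref{eq:extsplice}, \eqref{eq:equalitysplice} as mere bookkeeping.

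First, the case $\len(\sigma) = \len(\tau)$ is immediate from \eqref{eq:equalitysplice}, since $\splice$ acts as the identity on such pairs.

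Next I would handle the case $\len(\sigma) < \len(\tau)$. Writing $\splice(\sigma,\tau) = (\barsig, \bartau)$ via \eqref{eq:spliceeq}, the length bookkeeping gives $\len(\barsig) = \len(\tau) > \len(\sigma) = \len(\bartau)$, so the second application of $\splice$ is now governed by clause \eqref{eq:extsplice}:
\[
  \splice(\barsig,\bartau) \;=\; \omega \circ \splice(\bartau,\barsig).
\]
Applying Lemma~\ref{lem_splice} to the pair $(\bartau,\barsig)$ (which is a case where $\len(\bartau) < \len(\barsig)$) yields $\splice(\bartau,\barsig) = (\tau,\sigma)$, and the outer $\omega$ then returns $(\sigma,\tau)$, as required.

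Finally I would reduce the case $\len(\sigma) > \len(\tau)$ to the previous one. By \eqref{eq:extsplice}, $\splice(\sigma,\tau) = \omega \circ \splice(\tau,\sigma)$; writing $\splice(\tau,\sigma) = (\tildesig,\tildetau)$, Lemma~\ref{lem_splice} applied directly to the pair $(\tau,\sigma)$ (with $\len(\tau) < \len(\sigma)$) gives $\splice(\tildetau,\tildesig) = (\sigma,\tau)$. Since $\splice(\sigma,\tau) = \omega(\tildesig,\tildetau) = (\tildetau,\tildesig)$, a second application of $\splice$ returns $(\sigma,\tau)$, completing the verification. The proof has no real obstacle beyond Lemma~\ref{lem_splice} itself; the corollary is a mechanical check that the two extension clauses are compatible with the involutive property already established in the ``shorter less than longer'' case.
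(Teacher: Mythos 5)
Your proof is correct and follows exactly the route the paper intends: the paper states Corollary~\ref{cor:corone} as a ``direct consequence of \eqref{eq:extsplice} and Lemma~\ref{lem_splice}'' without spelling out the three length cases, and your case analysis is precisely the omitted verification. Nothing to add.
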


\subsection{Splice and CSFs of column composition shapes} We will now extend the definition of the splice operation to CSFs. Given a CSF $F$ of partition shape and two of its adjacent columns, we can consider the new CSF obtained by applying the splice operator to that pair of columns, while leaving the other columns of $F$ unchanged. This produces a CSF whose shape is not necessarily a partition. 

We call the diagrams obtained by permuting the columns of a partition as {\em column compositions} (we allow empty columns). Now, let $\lambda$ be a partition; the columns of $\dg(\lambda)$ have lengths $\lambda_i' \; (i=1, \cdots, \lambda_1)$. Let $\Comp(\lambda)$ denote the set of all column composition shapes obtained by permuting the columns of $\lambda$. 
Let $\gamma \in \Comp(\lambda)$. If $d:=\lambda_1$, then $\dg(\gamma)$ has $d$ columns; we let $\gamma'_i$ denote the length of the $i^{th}$ column of $\dg(\gamma)$. Define $\csf[\gamma]$ to be the set of column strict fillings $F:\dg(\gamma) \to [n]$.

\begin{definition}\label{def:spliceoncsf}
Let $\lambda$ be a partition with $d:=\lambda_1$ and let $1 \leq i < d$. The $i^{th}$ splice operation $\mathbb{S}_i$  is the map 
\[ \mathbb{S}_i: \bigsqcup_{\gamma \in \Comp(\lambda)} \csf[\gamma] \to \bigsqcup_{\gamma \in \Comp(\lambda)} \csf[\gamma].\]
defined as follows: given $F \in \csf[\gamma]$, $\mathbb{S}_i(F)$ is obtained by replacing the $i^{th}$ and $(i+1)^{th}$ columns of $F$ by the result of applying the splice operator of \S\ref{sec:elsplice} to that pair, while leaving the other columns of $F$ unchanged. 
\end{definition}
%%%%%%%%%%%%%%%%%%%%%%%%%%%%
\begin{example}
Let $n=5$ and $\lambda= (8,6,3,1,0)$; thus $\lambda'=(4,3,3,2,2,2,1,1)$. Let $F$ be the CSF in Figure~\ref{fig:example1}; the column lengths of $\dg(F)$ form a permutation of those of $\dg(\lambda)$. Shown below are the images of $F$ under the $i^{\,th}$ splice operation $\spi(F)$ for $i=1, 3$. The cells of  $F$ and  $\spi(F)$ are colour-coded to highlight how the $i^{th}$ and $(i+1)^{th}$ columns of $F$ are transformed by $\spi$.
\end{example}
\begin{figure}[h]
\begin{center}
\begin{tikzpicture}
\draw (3.40,7.5)node {\ytableausetup{mathmode,boxsize=1.2em}
$F=\;$\begin{ytableau}
  *(red)1 & *(lime) 1 &   *(magenta) 2 &*(green) 1   &  *(orange) 2 &  *(pink) 3 &  *(yellow) 2 & *(cyan) 5\\
   *(red) 2 & *(lime) 3 & *(magenta) 3 & *(green) 2   &  *(orange) 3 &  \none &*(yellow)3 \\
 *(red) 3 & \none & \none &*(green) 4   & \none & \none & *(yellow)4 \\
  \none &\none &\none &*(green) 5
\end{ytableau} };
\draw (0.00,5.00)node {\ytableausetup{mathmode,boxsize=1.2em}
  $\spi[1](F)=\;$\begin{ytableau}
  *(red)1 & *(lime) 1 &   *(magenta) 2 &*(green) 1   &  *(orange) 2 &  *(pink) 3 &  *(yellow) 2 & *(cyan) 5\\
   *(lime) 3 & *(red) 2 & *(magenta) 3 & *(green) 2   &  *(orange) 3 &  \none &*(yellow)3 \\
 \none &*(red) 3 &  \none &*(green) 4   & \none & \none & *(yellow)4 \\
  \none &\none &\none &*(green) 5
  \end{ytableau}\;};
\draw (6.80,5.00)node {\ytableausetup{mathmode,boxsize=1.2em}$\spi[3](F)=\;$\begin{ytableau}
  *(red)1 & *(lime) 1 &   *(magenta) 2 &*(green) 1   &  *(orange) 2 &  *(pink) 3 &  *(yellow) 2 & *(cyan) 5\\
   *(red) 2 & *(lime) 3 & *(magenta) 3 & *(green) 2   &  *(orange) 3 &  \none &*(yellow)3 \\
 *(red) 3 & \none & *(green) 4  &\none & \none & \none & *(yellow)4 \\
 \none &\none &*(green) 5  
\end{ytableau}};
\end{tikzpicture}
\end{center}
\caption{}\label{fig:example1}
\end{figure}

%%%%%%%%%%%%%%%%%%%%%%%%%%%%%%
The following properties are immediate consequences of the definition: (i) The shape of $\mathbb{S}_i(F)$ is $s_i(\dg(\gamma))$ where $s_i$ denotes the simple transposition in $S_d$ which acts on diagrams of column compositions by swapping the columns $i, i+1$. (ii) The multiset of entries of each given row of $F$ coincides with that of the corresponding row of $\mathbb{S}_i(F)$. 

It follows from Corollary~\ref{cor:corone} that 
\begin{equation}\label{eq:involution}
\spi^2=1 \text{ (the identity map) for } 1 \leq i <d.
\end{equation}
It is also clear from Definition~\ref{def:spliceoncsf} that 
\begin{equation}\label{eq:commuting}
\spi \spi[j] = \spi[j] \spi \text{ whenever } 1 \leq i,j <d \text{ with } |i-j|>1,
\end{equation}
since in this case $\spi$ and $\spi[j]$ act on disjoint pairs of columns. 

It turns out that the $\spi$ also satisfy the braid relations
\begin{equation}\label{eq:braid}
\spi \spi[i+1] \spi = \spi[i+1] \spi[i] \spi[i+1]
\end{equation}
for $1 \leq i < (d-1)$, but we will not require the full force of this assertion in what follows. So we content ourselves with proving the following special case of immediate interest, since it has a very short proof. We defer the general case and its consequences to an upcoming sequel.
\begin{lemma}\label{lem:elembraid}
Let $\gamma$ be a column composition with $d$ columns. If $1 \leq i  \leq d-2$ is such that some two among $\gamma'_i, \gamma'_{i+1}, \gamma'_{i+2}$ are equal, then $\spi \spi[i+1] \spi (F) = \spi[i+1] \spi[i] \spi[i+1] (F)$ for all $F \in \csf[\gamma]$.
\end{lemma}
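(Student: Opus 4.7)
The plan is to exploit two very simple structural facts about the elementary splice operation, both immediate from \S\ref{sec:elsplice}: (a) by Lemma~\ref{lem_splice} together with \eqref{eq:equalitysplice}, the operator $\spi$ swaps the lengths of columns $i$ and $i{+}1$ of its argument; and (b) by \eqref{eq:equalitysplice} itself, $\spi$ acts as the identity on any filling whose columns $i$ and $i{+}1$ already have the same length. Together with the involution identity \eqref{eq:involution}, these two observations will do essentially all the work.

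Writing $k_j := \gamma'_{i+j-1}$ for $j=1,2,3$ and letting $F \in \csf[\gamma]$, my strategy is to trace the length sequence at positions $(i, i{+}1, i{+}2)$ through the three splices on each side of the claimed identity, and then case-analyze on which pair among the $k_j$ coincide. The key point is that each splice merely transposes two adjacent entries of this length sequence, so the evolution of the sequence is easy to follow. In each case I expect the tracking to expose at least one splice in each of the two triple compositions that ends up acting on two columns of equal length, and is therefore the identity by (b). After deleting these trivial factors, the two sides should collapse either to the same two-step composition or, in one case, to $F$ itself via \eqref{eq:involution}.

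Concretely, if $k_1 = k_2$, the innermost $\spi$ on the left is trivial by (b), leaving $\spi\spi[i+1](F)$; on the right, after the first $\spi[i+1]$ and then $\spi[i]$ the length sequence reads $(k_3, k_2, k_2)$ (using $k_1 = k_2$), so the outermost $\spi[i+1]$ is trivial as well, leaving $\spi[i]\spi[i+1](F) = \spi\spi[i+1](F)$. The case $k_2 = k_3$ is handled by the mirror-image argument, with both sides collapsing to $\spi[i+1]\spi(F)$. Finally, if $k_1 = k_3$, the middle splice on each side encounters two columns of common length $k_1$ (produced by the preceding splice), hence is trivial, and the two sides reduce respectively to $\spi^2(F) = F$ and $\spi[i+1]^2(F) = F$ via \eqref{eq:involution}.

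I do not foresee any conceptual obstacle; the only thing requiring care is the bookkeeping of the length sequence across three consecutive splice operations, and this is purely mechanical once one notes that each splice acts as an adjacent transposition on this sequence. Notably, the full braid relation \eqref{eq:braid} that the authors state but do not prove here cannot be handled by length-tracking alone and presumably requires a genuinely finer analysis at the level of column contents -- which is why its proof is explicitly deferred to a sequel.
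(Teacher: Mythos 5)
Your proposal is correct and follows essentially the same route as the paper's proof: the same three-case split on which pair of column lengths coincide, using that $\spi$ is the identity on equal-length adjacent columns (equation~\eqref{eq:equalitysplice}), that it swaps column lengths otherwise, and that $\spi^2=1$ for the case $\gamma'_i=\gamma'_{i+2}$. The only cosmetic difference is your explicit tracking of the length sequence, which the paper leaves implicit.
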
 
\begin{proof}
     We consider three cases:
    \begin{enumerate}
    \item $\gamma'_i=\gamma'_{i+1}$: Equation \eqref{eq:equalitysplice} implies that $\spi(F) = F$. Further the $(i+1)^{th}$ and $(i+2)^{th}$ columns of $\spi\spi[i+1](F)$ are of equal length. Appealing to \eqref{eq:equalitysplice} again, we conclude that $\spi \spi[i+1] \spi (F) = \spi \spi[i+1](F) = \spi[i+1]\spi \spi[i+1](F)$.     
    \item $\gamma'_{i+1}=\gamma'_{i+2}$: This follows from the argument of Case 1 by swapping the roles of $\spi$ and $\spi[i+1]$.
    \item $\gamma'_{i}=\gamma'_{i+2}$: The $(i+1)^{th}$ and $(i+2)^{th}$ columns of $\spi(F)$ are equal and we obtain $\spi \spi[i+1] \spi(F)  = \spi^2(F) = F$ by Corollary~\ref{cor:corone}. Likewise, $\spi[i+1] \spi \spi[i+1]  = \spi[i+1]^2(F) = F$. 
    \end{enumerate}
\end{proof}

\begin{corollary}\label{cor:corab}
Let $a \neq b$ be natural numbers.
Let $\lambda$ be a partition with $d:=\lambda_1$. Suppose $\lambda_i' \in \{a, b\}$ for all $i$, i.e., the columns of $\dg(\lambda)$ have one of two possible lengths. Then the operators $\{\spi: 1 \leq i <d\}$ on $\bigsqcup_{\gamma \in \Comp(\lambda)} \csf[\gamma]$ satisfy the relations \eqref{eq:involution}, \eqref{eq:commuting}  and \eqref{eq:braid}. 
\end{corollary}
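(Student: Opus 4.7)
The plan is to leverage the earlier work directly. Relations \eqref{eq:involution} and \eqref{eq:commuting} have already been established in full generality for every column composition shape (the first from Corollary~\ref{cor:corone}, the second since $\spi$ and $\spi[j]$ act on disjoint pairs of columns when $|i-j|>1$). So no additional hypothesis on the column lengths is needed for these two relations, and they transfer verbatim to our setting.

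For the braid relation \eqref{eq:braid}, I would argue pointwise: it suffices to fix an arbitrary $F \in \csf[\gamma]$ for some $\gamma \in \Comp(\lambda)$, and verify that $\spi\spi[i+1]\spi(F) = \spi[i+1]\spi\spi[i+1](F)$. Since $\gamma$ is obtained by permuting the columns of $\lambda$, the multiset of column lengths of $\gamma$ agrees with that of $\lambda$, so every $\gamma'_j$ lies in the two-element set $\{a,b\}$. In particular $\gamma'_i, \gamma'_{i+1}, \gamma'_{i+2}$ are three elements drawn from a set of size two, so by pigeonhole at least two of them coincide.

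This is precisely the hypothesis of Lemma~\ref{lem:elembraid}, which then delivers the desired equality at $F$. Since $F$ was arbitrary in the disjoint union $\bigsqcup_{\gamma \in \Comp(\lambda)} \csf[\gamma]$, the braid identity holds as an equality of maps. There is no real obstacle beyond assembling these pieces; the only thing worth highlighting is that the shape $\gamma$ under consideration can change after applying an $\spi$, but the two-value hypothesis on column lengths is preserved by any such rearrangement, so it continues to hold for every intermediate shape encountered when evaluating the two sides of \eqref{eq:braid}.
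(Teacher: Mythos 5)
Your proposal is correct and follows the paper's own argument: the involution and commutation relations hold in general, and the braid relation reduces to Lemma~\ref{lem:elembraid} via the pigeonhole observation that any three of the $\gamma'_i \in \{a,b\}$ must contain a repeated value. Your added remark that the two-value hypothesis is preserved under column permutations is a harmless (and correct) clarification that the paper leaves implicit.
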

\begin{proof}
We only need to establish the braid relations \eqref{eq:braid}. This follows from Lemma~\ref{lem:elembraid} since every $\gamma \in \Comp(\lambda)$ now satisfies the hypothesis of this lemma.
\end{proof}
Let $d$ be the number of columns of $\dg(\lambda)$. Let $s_i$ denote the simple transposition $(i \,\; i+1) \in S_d$ for $1 \leq i <d$. Since these $s_i$ are generators of $S_d$, with defining relations $s_i^2 =1$ and
\begin{equation}\label{eq:commbraid}
s_i s_j = s_j s_i \; \text{ if } |i-j|>1 \text{ and } s_is_{i+1}s_i = s_{i+1}s_is_{i+1},
\end{equation}
we obtain:
\begin{corollary}\label{cor:cortwo}
Let $\lambda$ be a partition with at most two distinct column lengths and let $d:=\lambda_1$. 
    The map $s_i \mapsto \spi$ extends to a well-defined action of $S_d$ on $\bigsqcup_{\gamma \in \Comp(\lambda)} \csf[\gamma]$. \qed
\end{corollary}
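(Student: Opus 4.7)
The plan is to invoke the standard Coxeter presentation of $S_d$ together with Corollary~\ref{cor:corab}, so that the assignment $s_i \mapsto \spi$ extends to a group homomorphism by the universal property.

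First, I would recall that $S_d$ is presented by generators $s_1, s_2, \ldots, s_{d-1}$ modulo exactly the relations $s_i^2 = 1$, together with the commutation and braid relations in \eqref{eq:commbraid}. Consequently, to extend $s_i \mapsto \spi$ to a well-defined action of $S_d$ on the set $X := \bigsqcup_{\gamma \in \Comp(\lambda)} \csf[\gamma]$, it suffices to verify that the operators $\spi$, viewed as self-maps of $X$, satisfy the corresponding three families of relations.

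The three relations to check are precisely \eqref{eq:involution}, \eqref{eq:commuting} and \eqref{eq:braid}. Under the hypothesis that $\lambda$ has at most two distinct column lengths, Corollary~\ref{cor:corab} asserts exactly that all three relations hold for the $\spi$ on $X$. (Note that each $\spi$ is already a bijection of $X$ because it is an involution by \eqref{eq:involution}, so the operators indeed land in the symmetric group $\mathrm{Sym}(X)$.) Therefore the map $s_i \mapsto \spi$ extends uniquely to a group homomorphism $S_d \to \mathrm{Sym}(X)$, which is by definition an $S_d$-action on $X$.

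There is no substantive obstacle here: the entire content has already been packaged into Corollary~\ref{cor:corab}, and what remains is the formal appeal to the Coxeter presentation. The one minor point worth flagging in the write-up is that the braid and commutation relations hold uniformly on \emph{all} of $X$ (not just on each $\csf[\gamma]$ separately), since $\spi$ moves a filling between pieces of the disjoint union whose shapes differ by swapping two columns, and Corollary~\ref{cor:corab} is stated on this disjoint union. With that observation in place, the conclusion is immediate.
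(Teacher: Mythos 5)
Your proposal is correct and is exactly the paper's argument: the corollary is stated immediately after recalling the Coxeter presentation of $S_d$, and it follows by combining that presentation with the relations \eqref{eq:involution}, \eqref{eq:commuting} and \eqref{eq:braid} supplied by Corollary~\ref{cor:corab}. Your additional remark that each $\spi$ is a bijection of the disjoint union (hence lands in its symmetric group) is a sensible, if routine, point of care that the paper leaves implicit.
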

If we let $s_i \cdot F$ denote $\spi(F)$, then we clearly have $\dg(s_i \cdot F)=s_i(\dg(F))$ for all $F \in \bigsqcup_{\gamma \in \Comp(\lambda)} \csf[\gamma]$. 

As mentioned earlier, the braid relations and thereby the assertion of Corollary~\ref{cor:cortwo} hold more generally - for all partitions $\lambda$. The proof of this fact and its generalizations will form part of a subsequent paper. Corollary~\ref{cor:cortwo} suffices for our present purposes, and will allow us to show well-definedness of the branching map for column strict fillings. We now proceed to establish this.

\subsection{Branching: delete-and-splice}\label{sec:dsplice}
We now formulate the notion of branching in the CSF model. 
\begin{definition}\label{def:dsplice} Given $F \in \csf$,  its {\em delete-and-splice rectification} (denoted $\dsplice(F)$)  is the result of applying the following algorithm to $F$:
\begin{enumerate}
    \item delete all cells in $F$ containing the entry $\boxed{n}$ and let $\fdag$ denote the resulting filling. While its column entries remain strictly increasing, $\fdag$ may no longer be of partition shape. 
    \item Let $\sigma^{(j)}\, (j \geq 1)$ denote the column tuple obtained by reading the $j^{\text{\,th}}$ column of $\fdag$ from top to bottom.  If $\fdag$ is not of partition shape, there exists $j \geq 1$ such that $\len(\sigma^{(j+1)}) > \len(\sigma^{(j)})$.
        Choose any such $j$ and replace $\fdag$ by $\spi[j]\fdag$. This brings the shape of $\fdag$ one step closer to being a partition.
    \item If the shape of $\fdag$ is a partition, STOP. Else go back to step 2.
\end{enumerate}
It is clear that this process terminates and finally produces a CSF of partition shape (filled by numbers between $1$ and $n-1$), which we denote $\dsplice(F)$.\qed
\end{definition}

\begin{proposition}\label{prop:dsplice-prop}
With notation as above: 
\begin{enumerate} 
\item $D:=\dsplice(F)$ is independent of the intermediate choices of $j$ made in step 2 of the algorithm. 
\item $\rsort(D)$ is obtained from $\rsort(F)$ by deleting the cells containing the entry $n$.
\item If $\mu$ and $\lambda$ are the shapes of $D$ and $F$ respectively, then $\mu \prec \lambda$. 
\end{enumerate}
\end{proposition}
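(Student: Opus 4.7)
The plan is to dispatch parts (2) and (3) first by multiset-preservation arguments, and then to attack (1), which is the main obstacle.

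For (2) and (3): Each $\spi[j]$ preserves the multiset of entries in every row (as noted after Definition~\ref{def:spliceoncsf}), and hence also the number of cells in each row. Since $F$ is column-strict, each entry equal to $n$ lies at the bottom of its column, so the number $k_i$ of $n$'s in row $i$ of $F$ is at most the number of columns of $\dg(\lambda)$ of length exactly $i$, namely $\lambda_i - \lambda_{i+1}$. Row $i$ of $\fdag$ has $\lambda_i - k_i$ cells, and this count is preserved by splicing, so the partition shape $\mu$ of $D$ satisfies $\mu_i = \lambda_i - k_i \geq \lambda_{i+1}$, proving (3). For (2), both $D$ and $\rsort(F)$-with-$n$'s-removed are fillings of shape $\mu$ (the latter is an element of $\SSYT[\mu]$ obtained by excising a horizontal strip, since $\rsort(F) \in \SSYT$); moreover, row by row they have the same multiset of entries, namely row $i$ of $F$ stripped of its $n$'s. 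Sorting the rows of $D$ therefore reproduces the latter, proving (2).

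The main work lies in (1). I claim the algorithm decomposes into independent sorts on disjoint \emph{blocks} of columns, each involving only two distinct column lengths. Partition the positions $\{1,\ldots,\lambda_1\}$ into maximal runs on which $j \mapsto \lambda'_j$ is constant, with common value $a_B$ on block $B$; within a block, every column of $\fdag$ has length in $\{a_B,\, a_B-1\}$. At a boundary between adjacent blocks of values $a > a'$, the left column has length at least $a-1 \geq a'$ while the right column has length at most $a'$, so no length-inversion can ever arise at a block boundary. Hence every splice performed in step~2 of Definition~\ref{def:dsplice} acts within a single block, and since splicing preserves the multiset of lengths of the two columns it touches, the block structure persists through every iteration of the algorithm.

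Restricted to a single block $B$, the partition $\lambda_B$ consisting of the appropriate multiplicities of columns of lengths $a_B$ and $a_B-1$ has at most two distinct column lengths; hence Corollary~\ref{cor:cortwo} endows $\bigsqcup_{\gamma \in \Comp(\lambda_B)} \csf[\gamma]$ with a genuine symmetric-group action sending $s_j \mapsto \spi[j]$. Each valid splice within $B$ swaps a neighbouring pair of lengths $(a_B-1,\,a_B)$, strictly decreasing the total number of length-inversions while preserving the relative order of equal-length columns, so the in-block process terminates after a bounded number of steps at the unique weakly-decreasing rearrangement of the block's length sequence. Any two such terminating sequences are therefore reduced expressions for the same element of the symmetric group on the columns of $B$, and by the $S$-action just described they produce identical CSFs. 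Assembling the per-block conclusions proves (1).
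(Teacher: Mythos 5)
Your proof is correct and follows essentially the same route as the paper: your ``blocks'' of constant column length are exactly the paper's decomposition of $\dg(\lambda)$ into rectangular sub-partitions, the observation that no splice crosses a block boundary is the paper's equation~\eqref{eq:gamprime}, and the well-definedness within a block rests on the same two-column-length braid relations (Corollary~\ref{cor:corab}/Corollary~\ref{cor:cortwo}), with your appeal to the $S_d$-action playing the role of the paper's invocation of Tits' theorem. The only (harmless) divergence is in part (3), where you count the $n$'s per row directly instead of citing the horizontal-strip property of $\rsort(F)$.
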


\noindent
In other words, $\dsplice$ is a well-defined map:
\[ \dsplice: \csf \to \bigsqcup_{\mu \prec \lambda}\csf[\mu]\]
where for $\mu \prec \lambda$, we let $\csf[\mu]$ denote the set of column strict fillings $F: \dg(\mu) \to [n-1]$.
We consider $\dsplice$ to be the combinatorial branching map in the CSF context. It has the remarkable property of being  compatible with the natural branching map $\br$ of the POP setting, as will be established in Theorem~\ref{thm:mainthm}.
 We note that while each $\splice$ operation is ``local'' -  affecting two adjacent columns at a time - the end result $\dsplice(F)$ can have a fair bit of ``intermixing'' amongst columns of $F$, as Example~\ref{eg:mixing} shows.
%%%%%%%%%%%%%%%%%%%%%%%%%%%%%%%%%%%%%%%%%
\begin{example}\label{eg:mixing}
In Figure~\ref{fig:mix}, we take $n=6$ and $\lambda= (10,7,5,2,0)$. Let $F \in \csf$ and let $\fdag$ be the CSF of column composition shape obtained from $F$ by deleting the boxes containing the entry $\boxed{6}$. Then $\dsplice(F)$ is a CSF of shape $\mu= (9,7,4,2,0)$ with entries $\leq 5$. The columns of $F$, $\fdag$, and $\dsplice(F)$  are colour-coded in Figure~\ref{fig:mix} to show the intermingling under the $\dsplice$ operation.
\end{example}
\begin{figure}[t]
\begin{center}
\begin{tikzpicture}
\draw (0.00,7.5)node {\ytableausetup{mathmode,boxsize=1.2em}
$F=\;$\begin{ytableau}
  *(green) 1  &  *(red)1 & *(cyan) 2 & *(purple) 1 &  *(orange) 2 &  *(pink) 3 &  *(yellow) 2 & *(magenta) 4 & *(olive) 6 &*(lime) 5\\
  *(green) 2  & *(red) 2 & *(cyan) 3 &  *(purple) 2 &  *(orange) 3 &  *(pink) 6 &*(yellow)3& *(magenta) 5 \\
  *(green) 4 & *(red) 3 &*(cyan)  4 &  *(purple) 6 & *(orange)5 \\
  *(green) 6&*(red)4&*(cyan)  5
  \end{ytableau}};
\draw (6.6,7.5)node {\ytableausetup{mathmode,boxsize=1.2em}
  $\fdag=\;$ \begin{ytableau}
  *(green) 1  &  *(red)1 & *(cyan) 2 & *(purple) 1 &  *(orange) 2 &  *(pink) 3 &  *(yellow) 2 &*(magenta) 4 &\none & *(lime) 5 \\
  *(green) 2  & *(red) 2 & *(cyan) 3 &  *(purple) 2 &  *(orange) 3 & \none &*(yellow)3 & *(magenta) 5\\
  *(green) 4 & *(red) 3 &*(cyan)  4&  \none & *(orange)5\\
  \none &*(red)4&*(cyan)  5
\end{ytableau}};
\draw (3.2,4.80)node {\ytableausetup{mathmode,boxsize=1.2em}$\dsplice(F)=\,$\begin{ytableau}
  *(green) 1  &  *(red)1  & *(cyan) 2 &  *(purple) 1  &  *(orange) 2 &  *(yellow) 2  &  *(pink) 3 & *(magenta) 4 & *(lime) 5 \\
  *(green) 2  & *(red) 2 & *(cyan) 3  &  *(purple) 2 &  *(orange) 3 & *(yellow)3 & *(magenta) 5\\
 *(red) 3& *(green) 4 & *(cyan) 4  &  *(orange)5\\
 *(red)4&*(cyan)  5
\end{ytableau}};
\end{tikzpicture}
\end{center}
\caption{The dsplice operation intermixes columns.}\label{fig:mix}
\end{figure}
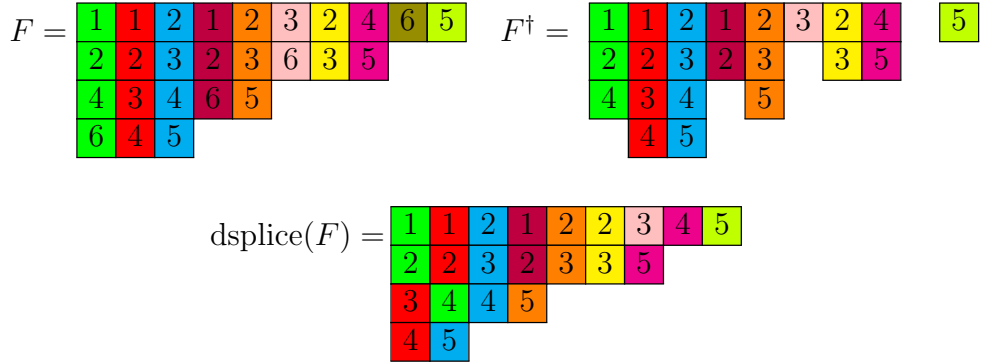
%%%%%%%%%%%%%%%%%%%%%%%%%%%%%%%%%%%%%%%%%%%
\subsection{} We now prove Proposition~\ref{prop:dsplice-prop}. 
Let $d:=\lambda_1$ be the number of columns of $F$.
Let $\gamma$ denote the (column composition) shape of $\fdag$. Let $\mu$ be the unique partition such that $\dg(\mu)$ is in the $S_d$-orbit of $\dg(\gamma)$, i.e., $\mu$ is obtained by rearranging the columns of $\gamma$ in descending order of length. Let $w^\gamma\in S_d$ be the unique element of minimal length
 such that $w^\gamma(\dg(\gamma)) = \dg(\mu)$. It is clear from Definition~\ref{def:dsplice} that iteration of step 2 of the algorithm therein results in the choice of $j_1, j_2, \cdots, j_r$ with the following properties:
\begin{enumerate}
    \item $s_{j_r}s_{j_{r-1}} \cdots s_{j_1}$ is a reduced word in $S_d$,
    \item $w^\gamma = s_{j_r}s_{j_{r-1}} \cdots s_{j_1}$.
    \end{enumerate}
Note that when the algorithm terminates, $\dsplice(F)$ is defined to be  
\begin{equation}\label{eq:dspspi}
\spi[j_r]\spi[j_{r-1}]\cdots \spi[1](\fdag).
\end{equation}
Let $\supp w^\gamma$ denote the set comprising $1 \leq i <d$ such that $s_i$ occurs in some (and hence in all) reduced decomposition(s) of $w^\gamma$. 
If the intermediate choices of $j$ in the algorithm were made differently, resulting in a sequence $(j'_i)$, then  $s_{j'_r}s_{j'_{r-1}} \cdots s_{j'_1}$ would be another reduced word for $w^\gamma$ (and hence of the same length as the previous one). 
The well-known theorem of Tits on the word problem (see \cite[Theorem 3.3.1]{bjorner-brenti}) asserts that any two reduced words may be transformed one into the other by using only the  commutation and braid relations \eqref{eq:commbraid}. In light of \eqref{eq:commuting} and \eqref{eq:dspspi}, $\dsplice(F)$ would be well-defined if we establish that the operators $\{\spi: i \in \supp w^\gamma\}$ satisfy   \eqref{eq:braid} on an appropriate subset \[\mathcal{Y} \subset \bigsqcup_{\pi \in \Comp(\mu)} \csf[\pi]\] which contains $\fdag$.

To find such $\mathcal{Y}$, we begin with the case when $\lambda$ is a rectangular partition, say $\lambda = (d, d, \cdots, d)$ ($p$ times). Since $F$ has shape $\lambda$ and each column of $F$ can contain at most one $\boxed{n}$, the columns of $\fdag$ are of length $p$ or $p-1$. Thus $\mu$ satisfies the hypothesis of Corollary~\ref{cor:corab}, which shows that we may take $\mathcal{Y} =\bigsqcup_{\pi \in \Comp(\mu)} \csf[\pi]$ in this case.

For the general case, we decompose $\dg(\lambda)$ into a concatenation of rectangular partition diagrams as shown in Figure~\ref{fig:decrectpart}. 
\begin{figure}[h]
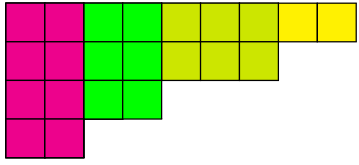

    \centering
    \ydiagram[*(magenta)]
  {2,2,2,2}
*[*(green)]{4,4,4,2}
*[*(Mycolor6)]{7,7,3,2}
*[*(yellow)]{9,7,3,2}
\caption{Decomposing a partition into  rectangular sub-partitions.}
    \label{fig:decrectpart}
\end{figure}
Formally, let $0=d_0 < d_1 < d_2 < \cdots < d_q=d$ be the unique indices such that (i) $\lambda'_i = \lambda'_j$ for all $d_{p-1} < i, j \leq d_p$ for $1 \leq p \leq q$ and (ii) $\lambda'_{d_1} > \lambda'_{d_2} > \cdots > \lambda'_{d_q}$. Let $\mathbb{D}:=\{d_p: 1 \leq p \leq q\}$.

Since $\lambda$ is a partition and $\lambda'_i - \gamma'_i=0$ or $1$ for all $i$, it follows that 
\begin{equation}\label{eq:gamprime}
\gamma'_i \geq \gamma'_j \text{ if } i \leq d_{p}< j \text{ for all } 1 \leq p <q.
\end{equation}
If $j_1, j_2, \cdots, j_r$ is a sequence of $j$'s generated by iteration of step 2 of the algorithm, it follows from \eqref{eq:gamprime} that $j_i \notin \mathbb{D}$ for all $1 \leq i \leq r$. Thus,
\[\supp w^\gamma \subseteq \mathbb{D}^c :=\{1 \leq i <d: i \notin \mathbb{D}\},\]
i.e.,  $w^\gamma \in Y$, the subgroup of $S_d$ generated by the $\{ s_i: i \in \mathbb{D}^c\}$. It is clear that $Y$ is isomorphic to the Young subgroup $S_{d_1} \times S_{d_2} \times \cdots \times S_{d_p}$.
Let $Y\cdot\mu$ denote the $Y$-orbit of $\mu$ and define
\[ \mathcal{Y} :=\bigsqcup_{\pi \in Y\cdot\mu} \csf[\pi]\]
Since $\gamma \in Y\cdot \mu$, we have $\fdag \in \mathcal{Y}$.
We now prove that the operators $\{\spi: i \in \mathbb{D}^c\}$ on $\mathcal{Y}$ satisfy the braid relations \eqref{eq:braid}. If $i, i+1 \in \mathbb{D}^c$, it follows that there exists $d_p \in \mathbb{D}$ such that $d_{p-1} <i<i+1<i+2 \leq d_p$. But since $\lambda'_j$ has the same value for all $d_{p-1} <j \leq d_p$, the corresponding $\gamma'_j$ can only take one of two values $\{\lambda'_j, \lambda'_j-1\}$. Therefore the pair $\gamma, i$ satisfies the hypothesis of Lemma~\ref{lem:elembraid}. 

More generally, given $\pi \in Y\cdot \mu = Y\cdot \gamma$, the columns of $\pi$ in each rectangular block are permutations of the columns in the corresponding block of $\gamma$, i.e., $(\pi'_j: d_{p-1} <j \leq d_p)$ is a permutation of $(\gamma'_j: d_{p-1} <j \leq d_p)$. Thus in light of the preceding argument, $\pi, i$ also satisfies the hypothesis of Lemma~\ref{lem:elembraid}. Proposition~\ref{prop:dsplice-prop}, part (1) now follows by appealing to Lemma~\ref{lem:elembraid}.

For part (2), we recall our earlier observation that the action of $\spi$ on a CSF (of column composition shape) leaves the multisets of entries of each row invariant. From \eqref{eq:dspspi}, it follows that the corresponding rows of $D:=\dsplice(F)$ and $\fdag$ have the same entries (possibly permuted). Since $\fdag$ is obtained from $F$ by deleting the cells containing $\boxed{n}$, it follows readily that $\rsort(D)$ is likewise obtained from $\rsort(F)$ by deleting the cells $\boxed{n}$. 

Part (3) follows from general facts about semistandard Young tableaux. The cells containing $\boxed{n}$ form a horizontal strip \cite[Chapter I]{MacMainBook} in $\rsort(F)$. This is equivalent to the condition that deleting them leaves a shape $\mu$ which interlaces $\lambda$. \qed

\section{The main results}\label{sec:mainresults}

\begin{thm}\label{thm:mainthm}
  For any $n \geq 1$ and any partition $\lambda: \lambda_1 \geq \lambda_2 \geq \cdots \geq \lambda_n \geq 0$ with at most $n$ nonzero parts, there exist two bijections $\psi_{\inv}$ and $\psi_{\quinv}$ from $\csf$ to $\pop$ with the following properties:

  \medskip
  \noindent
    1. If $\psi_v(F) = (T,\Lambda)$, then $x^F = x^T$ and $v(F) = |\Lambda|$, for $v=\inv$ or $\quinv$.
    
\medskip
\noindent
2. The following diagrams commute ($v=\inv$ or $\quinv$):
\begin{enumerate}
  \item[(A)]
\[\begin{tikzcd}
	\csf && \pop \\
	& \GT
	\arrow["{\psi_v}", from=1-1, to=1-3]
	\arrow["\rsort"'{pos=0.4}, from=1-1, to=2-2]
	\arrow["\pr"{pos=0.4}, from=1-3, to=2-2]
\end{tikzcd}\]

\item[(B)] \[\begin{tikzcd}
	\csf &&& \pop \\
	\\
	\displaystyle\bigsqcup_{\mu \prec \lambda}\csf[\mu] &&& \displaystyle\bigsqcup_{\mu \prec \lambda}\pop[\mu]
	\arrow["{\psi_v}", from=1-1, to=1-4]
	\arrow["\dsplice"', from=1-1, to=3-1]
	\arrow["{\psi_v}"', from=3-1, to=3-4]
	\arrow["\br", from=1-4, to=3-4]
\end{tikzcd}\]
\end{enumerate}

\medskip
\noindent
3. The two bijections are related via the commutative diagram:
\[\begin{tikzcd}
	& \csf \\
	\pop && \pop
	\arrow["{\psi_{\mathrm{inv}}}"', from=1-2, to=2-1]
	\arrow["{\psi_{\mathrm{quinv}}}", from=1-2, to=2-3]
	\arrow["\bcomp", from=2-1, to=2-3]
\end{tikzcd}\]\qed
\end{thm}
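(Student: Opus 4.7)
The plan is to construct each bijection by a local rule that reads the overlay data off the filling triple-by-triple, packaged via \emph{quinv-triples} for $\psi_{\quinv}$ and via \emph{inv-triples} (equivalently, \emph{refinv-triples}) for $\psi_{\inv}$. The GT pattern is forced once one insists on property~(A): since the multiset of entries of $F$ in each row is preserved by $\rsort$ and $\rsort(F)$ is already an SSYT, one must set $T := \rsort(F)$. With this choice $x^F = x^T$ is automatic and the triangle in part~(A) commutes by definition. The remaining content is (i) producing overlays $\Lambda_{ij}$ that fit into their $\NE_{ij}(T) \times \SE_{ij}(T)$ rectangles with $|\Lambda| = v(F)$, (ii) compatibility with branching, and (iii) the box-complementation relation in part~3.

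\textbf{Construction of the overlays.} For each $1 \le i \le j < n$, focus on the horizontal strip of cells of $T$ in rows $i,i+1$ with entries $\le j+1$: this strip has $\NE_{ij}(T)$ cells in row $i$ (the cells with entry $j+1$) sitting over $\NE_{ij}(T) + \SE_{ij}(T)$ cells in row $i+1$. The corresponding cells of $F$ have the same multiset of entries in each row, but the entries $j+1$ of the $i$th row of $F$ are scattered across the columns of this strip. Recording the columns in which they appear yields a strictly decreasing tuple in $B_{\NE_{ij}(T),\,\SE_{ij}(T)}$, which via \eqref{eq:gamma-a} becomes a partition $\Lambda_{ij} \in A_{\NE_{ij}(T),\,\SE_{ij}(T)}$. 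The heart of sections~\ref{sec:qtrip} and \ref{sec:invreftrip} will be to show that the local contribution of this strip to $\quinv(F)$ (respectively $\inv(F)$, through refinv) is precisely $|\Lambda_{ij}|$, so that summing over $(i,j)$ gives $v(F) = |\Lambda|$. This establishes part~1 and the bijectivity of $\psi_v$, since the encoding is reversible strip by strip.

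\textbf{Branching.} The key new content is part~(B). I plan to argue triple by triple: deletion of the entries $\boxed{n}$ from $F$ precisely removes the overlays $\Lambda_{i,\,n-1}$, which are exactly the ones that $\br$ discards. The remaining issue is that $\fdag$ no longer has partition shape, and one must show that each elementary splice $\spi[j]$ used in Definition~\ref{def:dsplice} preserves every overlay $\Lambda_{ij}$ with $j < n-1$. By construction $\spi[j]$ only permutes suffixes of two adjacent columns in a way that does not disturb the multisets of row entries (hence does not disturb $\rsort$), and a direct inspection of the quinv/inv-triples crossing the spliced boundary will show that those involving only entries $< n$ are unchanged. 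Combined with Proposition~\ref{prop:dsplice-prop} (which guarantees well-definedness and matches $\rsort(\dsplice(F))$ with $\pr(\br(T,\Lambda)) = T^\dag$), this yields $\psi_v(\dsplice(F)) = \br(\psi_v(F))$. I expect this invariance of the non-top overlays under a single elementary splice to be the main obstacle, requiring a careful case analysis, since the splice can rearrange cells nontrivially (as Example~\ref{eg:mixing} shows).

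\textbf{Box complementation.} For part~3, I plan to reduce to a local statement inside each rectangle: if $\psi_{\inv}(F) = (T,\Lambda)$ and $\psi_{\quinv}(F) = (T,\Lambda^\ast)$, then $\Lambda^\ast_{ij} = (\Lambda_{ij})^c$ within its $\NE_{ij}(T) \times \SE_{ij}(T)$ box. Refinv serves as the bridge here: at the level of a single horizontal strip, the positions recorded by the inv construction and those recorded by the quinv construction are mutually complementary sets (the $j+1$-cells versus the $\le j$-cells in row $i$ of $F$, read against the same baseline provided by $T$), so the associated partitions are box-complementary. Summing over all $(i,j)$ and invoking \eqref{eq:areaprop} gives both $\psi_{\quinv} = \bcomp \circ \psi_{\inv}$ and the corollary $\inv(F) + \quinv(F) = \area(\rsort(F))$, completing the proof.
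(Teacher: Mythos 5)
Your proposal follows essentially the same route as the paper: take $T=\rsort(F)$, read the overlays off strip-by-strip via quinv-triples and refinv-triples (with \eqref{eq:gamma-a} supplying the positional description that makes the map reversible, exactly as in the paper's construction of $\psi_v^{-1}$), prove part 2(B) by a case analysis showing each elementary splice preserves the overlays $\Lambda_{ij}$ with $j<n-1$ (the paper's Lemma~\ref{lem:keylem}), and obtain part 3 from the cellwise complementation $\zcount(c,F)+\zcb(c,F)=\SE_{ij}(T)$. One caution: inv-triples and refinv-triples are \emph{not} interchangeable cell-by-cell (only their totals agree, via Proposition~\ref{prop:refinv}, and the paper explicitly notes that HHL's inv-triples do not work out of the box), so the local bookkeeping must be run entirely through refinv, as you indicate.
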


%\noindent
To summarize, $\psi_{\mathrm{inv}}$ and $\psi_{\mathrm{quinv}}$ acting on a CSF produce partition overlaid patterns  with the same underlying GT pattern, but with complementary overlays. These bijections are compatible with the natural projection and branching maps, and preserve $x$-weights and the appropriate $q$-weights (inv or quinv). 

As in Proposition~\ref{prop:dsplice-prop}, note the slight abuse of notation in part 2(B) above: for $\mu \prec \lambda$, $\csf[\mu]$ denotes the set of column strict fillings $F: \dg(\mu)\to [n-1]$ (rather than $[n]$). Theorem~\ref{thm:mainthm}, with the exception of part 2(B), can also be formulated in the setting of $q$-Whittaker functions in infinitely many variables.

We record the following corollaries:
\begin{corollary}
Let $T \in \GT$ and let $\rsort^{-1}(T) = \{F \in \csf: \rsort(F)=T\}$ be the fiber of $\rsort$ over $T$. Then
\begin{enumerate}
\item \begin{equation}\label{eq:wtqinter}
\displaystyle\sum_{F \in \rsort^{-1}(T)} q^{\,\inv(F)} = \displaystyle\sum_{F \in \rsort^{-1}(T)} q^{\,\quinv(F)} = \wt_q(T).
\end{equation}
\item \[\inv(F) + \quinv(F) = \area(T)\] is  constant for $F \in \rsort^{-1}(T)$.
\end{enumerate}\qed
\end{corollary}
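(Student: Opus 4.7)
The plan is to derive both parts of the corollary as immediate consequences of Theorem~\ref{thm:mainthm}, using parts 2(A), 1, and 3 in turn. Crucially, no further combinatorial work is required beyond unpacking the compatibilities asserted in that theorem and invoking the POP-level identities \eqref{eq:popfiber} and \eqref{eq:areaprop}. I would not expect any real obstacle here; the only subtlety is keeping careful track of which bijection ($\psi_{\inv}$ or $\psi_{\quinv}$) is being used at each step.

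For part (1), fix $T \in \GT$ and let $v \in \{\inv, \quinv\}$. The commutativity of diagram 2(A) says $\pr \circ \psi_v = \rsort$, so $\psi_v$ restricts to a bijection
\[ \rsort^{-1}(T) \xrightarrow{\;\sim\;} \pr^{-1}(T) = \{\Lambda : (T,\Lambda) \in \pop\}. \]
By part 1 of Theorem~\ref{thm:mainthm}, if $F \mapsto (T,\Lambda)$ under $\psi_v$ then $v(F) = |\Lambda|$. Summing gives
\[ \sum_{F \in \rsort^{-1}(T)} q^{v(F)} \;=\; \sum_{\Lambda:\, (T,\Lambda)\in \pop} q^{|\Lambda|} \;=\; \wt_q(T), \]
where the last equality is exactly \eqref{eq:popfiber}. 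Since this holds for both $v=\inv$ and $v=\quinv$, we obtain \eqref{eq:wtqinter}.

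For part (2), take any $F \in \rsort^{-1}(T)$ and write $\psi_{\inv}(F) = (T,\Lambda)$; this uses diagram 2(A) again to ensure the GT pattern is indeed $T$. By part 3 of Theorem~\ref{thm:mainthm},
\[ \psi_{\quinv}(F) \;=\; \bcomp(\psi_{\inv}(F)) \;=\; (T,\Lambda^c), \]
where $\Lambda^c$ is the tuple of box-complements defined in \eqref{eq:bcdef}. Applying part 1 of the theorem to each bijection, $\inv(F) = |\Lambda|$ and $\quinv(F) = |\Lambda^c|$. The identity \eqref{eq:areaprop} then yields
\[ \inv(F) + \quinv(F) \;=\; |\Lambda| + |\Lambda^c| \;=\; \area(T), \]
which depends only on $T$ and is thus constant on the fiber $\rsort^{-1}(T)$, completing the proof. \qed
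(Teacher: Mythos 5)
Your proposal is correct and follows exactly the paper's own argument: part (1) from Theorem~\ref{thm:mainthm} part 2(A) together with \eqref{eq:popfiber}, and part (2) from part 3 of the theorem together with \eqref{eq:areaprop}. You have merely spelled out the details (invoking part 1 of the theorem to identify $v(F)$ with $|\Lambda|$) that the paper leaves implicit.
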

\begin{proof}
The first assertion follows from part 2(A) of Theorem \ref{thm:mainthm} and equation~\eqref{eq:popfiber}. The second is a consequence of part 3 of Theorem \ref{thm:mainthm} and \eqref{eq:areaprop}.
\end{proof}
A very different interpretation of $\wt_q(T)$ in terms of flags of subspaces compatible with nilpotent operators appears in \cite[Theorem 5.8(i)]{karpthomas}.

In \cite{AMM}, Ayyer-Mandelshtam-Martin asked for an explicit bijection on $\scf$ which interchanges the $\inv$ and $\quinv$ statistics while preserving $\maj$ and leaving the entries of each row of the filling invariant. We describe this bijection on $\csf$ (the subset with maximal $\maj$), thereby partially answering their question (see also \cite{ratheesh-ijpam} and \cite{jin-lin}):
\begin{corollary}\label{cor:bijamm}
The map $\Omega: \psi_{inv}^{-1} \circ \psi_{\quinv} = \psi_{inv}^{-1} \circ \, \bcomp \circ \, \psi_{\inv}: \csf \to \csf$ is an involution satisfying $\inv(\Omega(F)) = \quinv(F)$ for all $F \in \csf$. Further, $\rsort(F) = \rsort(\Omega(F))$.
\end{corollary}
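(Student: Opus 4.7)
The plan is to read off each of the four claims directly from Theorem~\ref{thm:mainthm}, since the corollary is essentially a formal rearrangement of its content. First, the identity $\psi_{\inv}^{-1} \circ \psi_{\quinv} = \psi_{\inv}^{-1} \circ \bcomp \circ \psi_{\inv}$ is immediate: pre-compose the identity $\psi_{\quinv} = \bcomp \circ \psi_{\inv}$ (part 3 of the theorem) with $\psi_{\inv}^{-1}$.

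Next I would show that $\Omega$ is an involution by observing that $\bcomp$ is itself an involution on $\pop$. This follows from \eqref{eq:bcdef}, since in each bounding rectangle the complement of $\pi^c$ is $\pi$ again. Conjugating by the bijection $\psi_{\inv}$ then yields $\Omega^2 = \psi_{\inv}^{-1} \circ \bcomp^2 \circ \psi_{\inv} = \mathrm{id}_{\csf}$.

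For the $\inv/\quinv$ interchange, fix $F \in \csf$ and write $\psi_{\inv}(F) = (T,\Lambda)$. Part 3 of the theorem gives $\psi_{\quinv}(F) = \bcomp(T,\Lambda) = (T,\Lambda^c)$, so part 1 supplies $\inv(F) = |\Lambda|$ and $\quinv(F) = |\Lambda^c|$. On the other hand $\psi_{\inv}(\Omega(F)) = \bcomp(\psi_{\inv}(F)) = (T,\Lambda^c)$, and applying part 1 once more yields $\inv(\Omega(F)) = |\Lambda^c| = \quinv(F)$. The $\rsort$-invariance is then an instant consequence of part 2(A): the GT pattern underlying $\psi_{\inv}(F)$ and $\psi_{\inv}(\Omega(F))$ is the same $T$, so $\rsort(F) = \pr(T,\Lambda) = T = \pr(T,\Lambda^c) = \rsort(\Omega(F))$.

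There is no genuine obstacle at this stage. All the combinatorial content—the very existence of the two bijections $\psi_{\inv}$ and $\psi_{\quinv}$, their respect for $x$- and $q$-weights, their compatibility with $\pr$, and the crucial fact that they differ precisely by $\bcomp$—is packed into Theorem~\ref{thm:mainthm}. Once that theorem is granted, the corollary is a short exercise in diagram chasing.
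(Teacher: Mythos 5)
Your proposal is correct and follows the same route as the paper, which simply cites parts (1) and (2A) of Theorem~\ref{thm:mainthm} (with part (3) implicit in the equality of the two expressions for $\Omega$); you have merely spelled out the diagram chase in full detail.
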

\begin{proof}
    The first assertion follows from part (1) of Theorem~\ref{thm:mainthm}, while the second assertion follows from part (2A).
\end{proof}

The statistics $zcount$, $\overline{zcount}$ of the following proposition are key ingredients in the proof of Theorem~\ref{thm:mainthm}, and are described in greater detail in sections~\ref{sec:cwzc}, \ref{sec:zcb}. They turn out to be the precise notions required to formulate the Chari-Loktev basis in the language of CSFs. 
\begin{proposition}\label{prop:clbasis-csf-Et}
    There exist non-negative integer valued maps $\zcount$ and $\zcb$ on $\dg(\lambda) \times \csf$ satisfying the following properties:
    \begin{enumerate}
        \item For all $F \in \csf$: \begin{align*}
        \sum_{c \in \dg(\lambda)} \zcount(c,F) &= \quinv(F)\\
        \sum_{c \in \dg(\lambda)} \zcb(c,F) &= \inv(F).
        \end{align*}
        \item If $F, F' \in \csf$ with $\rsort(F) = \rsort(F')$, then for all $c \in \dg(\lambda)$:
        \[\zcount(c,F) + \zcb(c,F) = \zcount(c,F') + \zcb(c,F').\] 
        \item Let $F \in \csf$ and $\mfp_v := \psi_v(F) \in \pop$ for $v \in \{\inv, \quinv\}$. If $\mathfrak{b}_v(F) :=\CL(\mfp_v)$ denotes the corresponding Chari-Loktev monomials, then:  
\begin{align}
\mathfrak{b}_{\quinv}(F) &= \displaystyle\prod_{c \in \dg(\lambda)}^{\rightharpoonup} E_{F(c),\,i(c)} \otimes t^{\,\zcount(c,F)} \label{eq:bquinv}\\
\mathfrak{b}_{\inv}(F) &= \displaystyle\prod_{c \in \dg(\lambda)}^{\rightharpoonup} E_{F(c), \,i(c)} \otimes t^{\,\zcb(c,F)} \label{eq:binv}
\end{align}
where for each $c \in \dg(\lambda)$, $i(c)$ denotes the row number in which $c$ occurs, and $\displaystyle\prod^{\rightharpoonup}$ means that the cells of $\dg(\lambda)$ are enumerated in weakly increasing order of $F(c)$ (any such enumeration gives the same final value in the products above). \qed
    \end{enumerate}
\end{proposition}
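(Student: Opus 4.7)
The plan is to unpack the bijections $\psi_{\quinv}$ and $\psi_{\inv}$ of Theorem~\ref{thm:mainthm} into cell-wise data. Given $F \in \csf$, set $(T,\Lambda) := \psi_{\quinv}(F)$ and $(T,\Lambda^c) := \psi_{\inv}(F)$, so that the two overlays are box-complements of one another by part~3 of the theorem. The statistics $\zcount(c,F)$ and $\zcb(c,F)$ will be defined (in sections~\ref{sec:cwzc} and~\ref{sec:zcb}) via a canonical matching of cells $c \in \dg(\lambda)$ with the parts of $\Lambda_{ij}$ and $\Lambda^c_{ij}$, respectively. The matchings for $\zcount$ and $\zcb$ will be arranged to be reverse-indexed: if $c$ corresponds to the $m$-th part of $\Lambda_{ij}$ under the $\zcount$-matching, then it corresponds to the $(\NE_{ij}(T)+1-m)$-th part of $\Lambda^c_{ij}$ under the $\zcb$-matching. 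By the definition of box-complementation, these paired parts then sum to $\SE_{ij}(T)$.

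Property (1) then follows immediately by summing parts: $\sum_c \zcount(c,F) = \sum_{i,j}|\Lambda_{ij}| = |\Lambda| = \quinv(F)$ by part~1 of Theorem~\ref{thm:mainthm}, and analogously $\sum_c \zcb(c,F) = |\Lambda^c| = \inv(F)$. Property (2) follows from the reverse-indexed pairing: the cell-wise sum $\zcount(c,F)+\zcb(c,F)$ is either $0$ (for cells outside any matching) or $\SE_{ij}(T)$ (for $c$ in the $(i,j)$-matching), both of which depend only on $T$ and the position of $c$ in $\dg(\lambda)$. Hence $\zcount+\zcb$ is constant on $\rsort$-fibers.

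For property (3), I would unpack $\CL(\psi_{\quinv}(F)) = \lij[1]\lij[2]\cdots\lij[n-1]$ via~\eqref{eq:clbasis-mon}--\eqref{eq:lij2}. Each $\lij[j]$ is a commutative product, over $i = 1,\ldots,j$ and parts $m$ of $\Lambda_{ij}$, of the factors $E_{j+1,i} \otimes t^{\Lambda_{ij}(m)}$. The cell-to-part matching rewrites this as a product over cells $c$ of the factors $E_{F(c),i(c)} \otimes t^{\zcount(c,F)}$, grouped into ``$j$-blocks'' indexed by $F(c)=j+1$. Within each $j$-block the factors commute (both in the CL definition and in $\nminus[t]$), so any enumeration of cells inside a block gives the same value; the ordered outer product over $j$ is then realized by enumerating blocks in weakly increasing $F(c)$. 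Cells with $F(c)=i(c)$ contribute a trivial factor (since $\zcount(c,F)=0$ by construction and $E_{i,i}\otimes t^{0}$ is interpreted as the identity). Equation~\eqref{eq:binv} follows by the identical argument using $\Lambda^c$ and $\zcb$.

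The principal obstacle is the construction of the canonical cell-to-part matchings and the verification of their reverse-indexing property. This will rely on the explicit combinatorics of $\psi_{\quinv}$ and $\psi_{\inv}$ developed in sections~\ref{sec:qtrip} and~\ref{sec:invreftrip}, in particular the intermediate refinv statistic and the notion of quinv- and inv-triples. The key structural input is the identity $\psi_{\quinv} = \bcomp \circ \psi_{\inv}$ from Theorem~\ref{thm:mainthm}, which converts parts of $\Lambda^c_{ij}$ into rectangle-complements of parts of $\Lambda_{ij}$ and thereby forces the reverse-indexed pairing needed for property~(2). Once this combinatorial groundwork is in place, properties~(1), (2), and (3) all reduce to the direct bookkeeping sketched above.
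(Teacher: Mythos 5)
Your outline is essentially the paper's argument with the logical arrows reversed. The paper does not extract $\zcount$ and $\zcb$ from the bijections; it defines them intrinsically on the filling first --- $\zcount(c,F)$ is the number of quinv-triples with apex $z=c$ (Definition~\ref{def:zcdef}) and $\zcb(c,F)$ is the number of refinv-triples with apex $z=c$ (equation~\eqref{eq:zcbdef}) --- and then \emph{builds} $\psi_{\quinv}$ and $\psi_{\inv}$ by reading these values over $\cells(i,j,F)$ right-to-left and left-to-right respectively (equations~\eqref{eq:lamij}, \eqref{eq:lamijbar}). In particular, your ``reverse-indexed matching'' is not something to be verified after the fact: it is the definition. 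The cellwise identity $\zcount(c,F)+\zcb(c,F)=\SE_{ij}(T)$ is then proved directly (Proposition~\ref{prop:zcandzcb}) by splitting a single set $\mathfrak{C}=C-\upop(C')$ of cardinality $\SE_{ij}(T)$ into the cells left of $c$ (counted by $\zcount$) and right of $c$ (counted by $\zcb$); box-complementation of the overlays is a \emph{consequence} of this, not an input. Your route --- taking Theorem~\ref{thm:mainthm}(3) as given and deducing the cellwise sum --- is logically admissible once the theorem is in hand, but it hides the one genuinely nontrivial ingredient of part (1): the equality $\sum_c\zcb(c,F)=\inv(F)$ rests on Proposition~\ref{prop:refinv} ($\inv=\refinv$), an adaptation of the Haglund--Haiman--Loehr sign-reversing argument, which your plan absorbs silently into ``$|\Lambda^c|=\inv(F)$''. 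Your treatment of part (3) matches the paper's (which simply points to \eqref{eq:lij1}--\eqref{eq:clbasis-mon} and \eqref{eq:lamij}--\eqref{eq:lamijbar}).

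One step in your justification of part (2) would fail as written: you assert that $\zcount(c,F)+\zcb(c,F)$ ``depends only on $T$ and the position of $c$''. By Proposition~\ref{prop:zcandzcb} the sum equals $\SE_{i,\,F(c)-1}(T)$, which depends on the \emph{entry} $F(c)$, not merely on the cell; for $\lambda=(2)$, $F=(1,2)$ and $F'=(2,1)$ the sums at the cell $(1,1)$ are $0$ and $1$. (The paper's own statement of part (2) has the same imprecision; the invariant quantity is the sum as a function of the row and the entry, equivalently the restriction to each $\cells(i,j,\cdot)$.) Apart from this, the remaining content of your proposal --- the construction of the cell-to-part matchings and their opposite orientations --- is exactly the material of sections~\ref{sec:qtrip}--\ref{sec:invreftrip} that you have deferred, so the proposal is a correct skeleton whose substance still has to be supplied there.
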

Putting together Theorem~\ref{thm:clthm} and Proposition~\ref{prop:clbasis-csf-Et}, we obtain two natural monomial bases of $\wloc$: 
\begin{corollary}\label{cor:clbasiscsf}
    Let $w_\lambda$ denote the cyclic generator of the local Weyl module $\wloc$ of $\curr$. Then, for $v \in \{\inv, \quinv\}$, the set
\[ \{ \mathfrak{b}_v(F) \,w_\lambda: F \in \csf\} \]
is a homogeneous basis of $\wloc$, with the $q$-grade of $\mathfrak{b}_v(F)$ being $v(F)$ and its $\sln$-weight being $x^F$. \qed
\end{corollary}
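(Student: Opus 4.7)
The plan is to assemble the corollary directly from Theorem~\ref{thm:clthm} and Proposition~\ref{prop:clbasis-csf-Et} via the bijections $\psi_v$ of Theorem~\ref{thm:mainthm}; no new combinatorial input is needed beyond what has been built up earlier. The essence is a reindexing argument: a basis indexed by $\pop$ is transported to a basis indexed by $\csf$ along $\psi_v$, and the weight/grade statements are tracked through the bijection.

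Concretely, I would proceed as follows. First, recall from Theorem~\ref{thm:mainthm} that, for each $v \in \{\inv,\quinv\}$, the map $\psi_v : \csf \to \pop$ is a bijection. By Theorem~\ref{thm:clthm}, the collection $\{\CL(\mfp)\,w_\lambda : \mfp \in \pop\}$ is a homogeneous basis of $\wloc$. Composing with $\psi_v$, the set
\[ \{ \CL(\psi_v(F))\,w_\lambda : F \in \csf\} \]
is therefore also a homogeneous basis of $\wloc$. By part (3) of Proposition~\ref{prop:clbasis-csf-Et}, we have $\mathfrak{b}_v(F) = \CL(\psi_v(F))$ as elements of $\uea\nminus[t]$, so the displayed set is precisely $\{\mathfrak{b}_v(F)\,w_\lambda : F \in \csf\}$. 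This establishes the basis claim.

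For the weight and grade assertions, write $\psi_v(F) = (T,\Lambda) \in \pop$. Theorem~\ref{thm:clthm} tells us that $\CL(\psi_v(F))\,w_\lambda$ has $\sln$-weight $x^T$ and $q$-grade $|\Lambda|$. But part~(1) of Theorem~\ref{thm:mainthm} identifies $x^T = x^F$ and $|\Lambda| = v(F)$. Substituting, $\mathfrak{b}_v(F)\,w_\lambda$ has $\sln$-weight $x^F$ and $q$-grade $v(F)$, as required.

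There is essentially no obstacle in this step; the real work is done elsewhere. The substantive content underlying the corollary is (i) the construction and properties of the bijections $\psi_v$ (Theorem~\ref{thm:mainthm}), and (ii) the identification in Proposition~\ref{prop:clbasis-csf-Et}(3) of the Chari-Loktev monomial $\CL(\psi_v(F))$ with the clean combinatorial expression involving $\zcount$ or $\zcb$ written directly in terms of the CSF $F$. Once those are in hand, Corollary~\ref{cor:clbasiscsf} is a one-line consequence, obtained by matching indexing sets along $\psi_v$ and reading off weights and grades via part~(1) of Theorem~\ref{thm:mainthm}.
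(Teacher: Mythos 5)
Your proposal is correct and follows exactly the route the paper intends: the corollary is stated as an immediate consequence of Theorem~\ref{thm:clthm} and Proposition~\ref{prop:clbasis-csf-Et}, with $\mathfrak{b}_v(F)=\CL(\psi_v(F))$ holding by definition and the weight/grade claims read off from part~(1) of Theorem~\ref{thm:mainthm}. Nothing further is needed.
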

Sections \ref{sec:qtrip}--\ref{sec:compf} are devoted to the construction of the $\psi_v$ and the proof of Theorem~\ref{thm:mainthm} and Proposition~\ref{prop:clbasis-csf-Et}. The explicit constructions of $\psi_v$ and $\psi_v^{-1}$ for $v=\inv, \quinv$ make the bijection $\Omega$ of Corollary~\ref{cor:bijamm} effectively computable. They also lead to the elegant descriptions of the bases in Corollary~\ref{cor:clbasiscsf}.

\section{quinv-triples}\label{sec:qtrip}
In this section, we define $\psi_{\quinv}$ and establish part 1 of Theorem 1 for $v=\quinv$.
\subsection{Cellwise zcounts}\label{sec:cwzc}
Given a partition, or more generally a column composition $\lambda$, the augmented diagram $\dgh(\lambda)$ is $\dg(\lambda)$ together with one additional cell below the last cell in each column (see Figure~\ref{fig:zcfig}). Given $u \in \dg(\lambda)$, let $\down[u] \in \dgh(\lambda)$ denote the cell immediately below $u$ in its column.

\begin{definition}\label{def:qtrip}
    Let $\lambda$ be a column composition. Given $F \in \csf$, a {\em quinv-triple} in $F$ is a triple of cells $(x,y,z)$ in $\dgh(\lambda)$ such that
    \begin{enumerate}
        \item[(i)] $x, z \in \dg(\lambda)$ and $z$ is to the right of $x$ in the same row, \item[(ii)] $y = \down[x]$,
        \item[(iii)] $F(x) < F(z) < F(y)$, where we set $F(y) = \infty$ if $y$ lies outside $\dg(\lambda)$.
    \end{enumerate}
\end{definition}

When $\lambda$ is a partition, it is easy to see that the quinv-triples considered in \cite[Definition 2.3]{AMM} for $F \in \scf$ reduce to this description when $F$ is a CSF rather than a general filling. We thus obtain from \cite{AMM} that for a partition $\lambda$ and $F \in \csf$:
\begin{equation}\label{eq:quinvtotal}
\quinv(F) = \text{ number of quinv-triples in } F. 
\end{equation}
We define a function {\em zcount} which tracks the contributions of individual cells of $\dg(\lambda)$ to $\quinv(F)$ as follows.
\begin{definition}\label{def:zcdef}
Let $\lambda$ be a column composition. Given $F \in \csf$ and a cell $c \in \dg(\lambda)$, define 
$\zcount(c, F)$ to be the number of quinv-triples  
 $(x,y,z)$  in $F$ with $z=c$.
\end{definition}

\begin{figure}
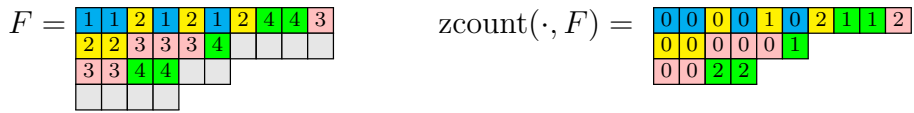

  \begin{center}
  \ytableausetup{mathmode, smalltableaux}
$F=\;$\begin{ytableau}
  *(cyan) 1  & *(cyan) 1 & *(yellow) 2 & *(cyan) 1 & *(yellow) 2 & *(cyan) 1 & *(yellow) 2 & *(green) 4 & *(green) 4 & *(pink) 3\\
  *(yellow) 2  & *(yellow) 2 & *(pink) 3 & *(pink) 3 & *(pink) 3 & *(green) 4 &*(light-gray) &*(light-gray) &*(light-gray) &*(light-gray) \\
  *(pink) 3  & *(pink) 3 & *(green) 4 & *(green) 4 &*(light-gray) &*(light-gray) \\
  *(light-gray) &*(light-gray) &*(light-gray) &*(light-gray) 
  \end{ytableau} \qquad \quad
 $\zcount(\cdot,F)=\;$ \begin{ytableau}
  *(cyan) 0  & *(cyan) 0 & *(yellow) 0 & *(cyan) 0 & *(yellow) 1 & *(cyan) 0 & *(yellow) 2 & *(green) 1 & *(green) 1 & *(pink) 2\\
  *(yellow) 0  & *(yellow) 0 & *(pink) 0 & *(pink) 0 & *(pink) 0 & *(green) 1\\
  *(pink) 0  & *(pink) 0 & *(green) 2 & *(green) 2
\end{ytableau}
\caption{Here $F \in \csf$ for $\lambda=(10,6,4,0)$ and $n=4$. Cells of $F$ are coloured according to their entries. The gray cells are the extra cells in the augmented diagram $\dgh(\lambda)$. On the right are cellwise zcount values. Here $\quinv(F)=12$.}
  \label{fig:zcfig}
\end{center}
\end{figure}

\subsection{}
Let $\lambda$ be a partition and $F \in \csf$. We clearly have from \eqref{eq:quinvtotal} that
\begin{equation}\label{eq:qwtquinv}
  \sum_{c \in \dg(\lambda)} \zcount(c,F) = \quinv(F).
\end{equation}
We next group together cells of the filling $F$ row-wise according to the entries they contain. More precisely,  let
\[\cells(i,j,F) = \{c \in \dg(\lambda): c \text{ is in the } i^{th} \text{ row and } F(c) = j+1\}\]
for $1 \leq i \leq j+1 \leq n$.
Figure~\ref{fig:zcfig} shows an example, with these groups colour-coded in each row.
It readily follows from \S\ref{sec:fermfor} that
\begin{equation}\label{eq:cellcard}
  |\cells(i,j,F)| = \NE_{ij}(T),  \text{ where } T = \rsort(F).
\end{equation}
The next proposition brings the SE differences also into play:
\begin{proposition}\label{prop:zcounts}
  Let $\lambda$ be a partition, $F \in \csf$ and $T = \rsort(F)$. Fix $1 \leq i \leq j+1 \leq n$.
  \begin{enumerate}
  \item If $c \in \cells(i,j,F)$, then \begin{equation}\label{eq:zcse} \zcount(c,F) \leq \SE_{ij}(T).
  \end{equation}
  \item If $c, d \in  \cells(i,j,F)$ with $c$ lying to the right of $d$, then 
  \[\zcount(c,F) \geq \zcount(d,F).\]
  \item Further, equality holds in \eqref{eq:zcse} above for all $i, j$ and all cells $c \in \cells(i,j,F)$ iff $F=T$.
  \end{enumerate}
\end{proposition}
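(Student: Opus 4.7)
The plan is to give a uniform reformulation of $\zcount(c,F)$ that makes Parts (1) and (2) essentially transparent, and then tackle Part (3), whose converse direction is the main technical content.

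For Parts (1) and (2), the starting point is a single observation: unpacking Definition~\ref{def:qtrip} for $z = c \in \cells(i,j,F)$, the quinv-triples with $z = c$ biject with cells $x$ in row $i$ lying strictly left of $c$ with $F(x) \leq j$ and $F(\down[x]) > j+1$, using the convention $F(\down[x]) = \infty$ when $\down[x] \notin \dg(\lambda)$. Write $A^{(i)}_j(F)$ for the set of all cells in row $i$ satisfying just the latter two conditions, so that $\zcount(c,F) = |\{x \in A^{(i)}_j(F) : x \text{ is strictly left of } c\}|$. I would then compute $|A^{(i)}_j(F)|$ directly: the cells of row $i$ with entry $\leq j$ number $T^j_i$ (since $\rsort$ preserves row multisets), and a cell $x$ with $F(x) \leq j$ fails to lie in $A^{(i)}_j(F)$ iff $\down[x] \in \dg(\lambda)$ and $F(\down[x]) \leq j+1$. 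Column strictness of $F$, together with $\lambda$ being a partition, makes $x \mapsto \down[x]$ a bijection between such $x$ and cells of row $i+1$ with value $\leq j+1$, which number $T^{j+1}_{i+1}$. Thus $|A^{(i)}_j(F)| = T^j_i - T^{j+1}_{i+1} = \SE_{ij}(T)$. Part (1) is immediate, and Part (2) follows because the subset $\{x \in A^{(i)}_j(F) : x \text{ left of } c\}$ only grows as $c$ moves right.

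For Part (3), the forward direction is a direct verification: when $F = T$, cells of row $i$ with value $\leq j$ occupy columns $1, \ldots, T^j_i$ and those with value $j+1$ occupy columns $T^j_i + 1, \ldots, T^{j+1}_i$, so $A^{(i)}_j(T)$ sits in columns $[T^{j+1}_{i+1}+1,\, T^j_i]$, entirely to the left of every $c \in \cells(i,j,T)$, yielding equality. The converse is the real work, and I would argue by contradiction via a \emph{descent chase}. Assume equality in \eqref{eq:zcse} holds universally but $F \neq T$; then some row $i_0$ of $F$ contains an adjacent inversion, i.e., cells $u_0$ immediately left of $v_0$ with $b_0 := F(u_0) > F(v_0) =: a_0$. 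Iteratively, while $\down[v_k]$ exists and $F(\down[v_k]) \leq b_k$, set $v_{k+1} := \down[v_k]$ and $u_{k+1} := \down[u_k]$ (the latter exists because $\lambda$ is a partition, so the column of $u_k$ is at least as tall as that of $v_k$), and put $a_{k+1} := F(v_{k+1})$, $b_{k+1} := F(u_{k+1})$. Column strictness forces $b_{k+1} > b_k$, while the non-termination hypothesis gives $a_{k+1} \leq b_k < b_{k+1}$; thus $(u_{k+1}, v_{k+1})$ is again an adjacent inversion in row $i_0 + k + 1$.

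Since $(b_k)$ strictly increases in $[n]$, the chain must halt at some $k^\star$ where either $\down[v_{k^\star}]$ does not exist or $F(\down[v_{k^\star}]) > b_{k^\star}$. In both cases $v_{k^\star} \in A^{(i_0+k^\star)}_{b_{k^\star}-1}(F)$, using $F(v_{k^\star}) = a_{k^\star} < b_{k^\star}$. Simultaneously $u_{k^\star} \in \cells(i_0+k^\star,\, b_{k^\star}-1,\, F)$ and $v_{k^\star}$ lies strictly to its right; hence the count $\zcount(u_{k^\star},F)$ misses at least the cell $v_{k^\star}$, giving
\[ \zcount(u_{k^\star}, F) \;\leq\; |A^{(i_0+k^\star)}_{b_{k^\star}-1}(F)| - 1 \;=\; \SE_{i_0+k^\star,\, b_{k^\star}-1}(T) - 1, \]
which contradicts the assumed equality at $u_{k^\star}$. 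Consequently each row of $F$ is weakly increasing, so $F = T$ by column strictness. The principal subtlety lies in sustaining the descent chain: guaranteeing $\down[u_k]$ exists whenever $\down[v_k]$ does (using the partition shape of $\lambda$) and confirming termination (forced by the strict growth of $(b_k)$ within $[n]$); the rest is careful bookkeeping.
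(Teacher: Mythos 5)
Your proof is correct. For parts (1) and (2), and for the forward direction of (3), you follow essentially the same route as the paper: your set $A^{(i)}_j(F)$ is precisely the paper's $C - \upop(C')$, its cardinality $\SE_{ij}(T)$ is computed via the same bijection $x \mapsto x^{\downarrow}$ onto the cells of row $i+1$ with entry $\leq j+1$, and $\zcount(c,F)$ is the number of its elements strictly to the left of $c$. The genuine divergence is in the converse of part (3). The paper argues by induction on rows from the bottom up: assuming all rows below row $i$ weakly increase, a failure of monotonicity in row $i$ is turned into a contradiction using the equality condition together with the monotonicity of row $i+1$. You instead run a downward descent chase from an arbitrary adjacent inversion: the chain $(u_k,v_k)$ is sustained because the partition shape guarantees the cell below $u_k$ exists whenever the cell below $v_k$ does, it terminates because $b_k$ strictly increases within $[n]$, and at termination $v_{k^\star}$ is an element of $A^{(i)}_{j}(F)$ (with $i=i_0+k^\star$, $j=b_{k^\star}-1$, a valid index since $b_{k^\star}$ is at least the row index by column strictness) lying to the right of $u_{k^\star} \in \cells(i,j,F)$, whence $\zcount(u_{k^\star},F) \leq \SE_{ij}(T)-1$ --- a direct, localized violation of the hypothesis. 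Your version is self-contained (no induction over rows and no appeal to the monotonicity of lower rows), and it also sidesteps the paper's reduction of a non-monotone row to an inverted pair with \emph{consecutive} entries $j+1$ and $j$, a step that is not automatic for an arbitrary non-monotone row (e.g.\ a row reading $3,1$) and needs the weaker choice of any right-hand entry $\leq j$ to go through; your chase handles such rows directly.
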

  \begin{proof}
Let $c \in \cells(i,j,F)$. Since $F(c)=j+1$,  a $\quinv$-triple $(x,y,z)$ with $z=c$ satisfies $F(x) \leq j$ and $F(y) \geq j+2$ (with $F(y)=\infty$ if  
$y$ lies outside $\dg(\lambda)$). Therefore $\zcount(c,F)$ is precisely the number of pairs of cells $(x,y)$ satisfying: 
\begin{enumerate}
    \item[(i)] $x \in \dg(\lambda)$ is in the $i^{\text{th}}$ row with $F(x) \leq j$ and 
    \item[(ii)] $y=\down[x]$  and $F(y) \geq j+2$.
    \item[(iii)] $x$ occurs to the left of $c$.
\end{enumerate}
The set of cells $x$ satisfying the first property is precisely 
\begin{equation}\label{eq:eqc}
C:=\bigcup_{k<j} \cells(i,k,F).
\end{equation}
Since $F$ and $T=\rsort(F)$ have the same entries in each row, it follows from the notation for GT patterns in \S\ref{sec:fermfor} that $|C| = T^j_i$.

Now consider 
\begin{equation}\label{eq:eqcprime}
C' = \bigcup_{k\leq \,j} \cells(i+1,k,F),
\end{equation}
which as before has cardinality $|C'| = T^{j+1}_{i+1}$. For each $y \in C'$, the cell $\up[y]$ above it must contain an entry $\leq j$, since $F$ is a CSF. A moment's thought allows us to conclude  from this fact that a pair $(x,\down[x])$ satisfies properties (i), (ii) above iff 
\[ x \in C - \upop(C') = C - \{\up[y]: y \in C'\}.\]
This set has cardinality 
\begin{equation}\label{eq:cupc}
|C - \upop(C')| = T^{j}_{i} - T^{j+1}_{i+1} = \SE_{ij}(T).
\end{equation}
Finally, incorporating property (iii) as well, $\zcount(c,F)$ is precisely the cardinality of the subset of $C - \upop(C')$ comprising cells which occur to the left of $c$. This proves part (1) of the proposition. This argument also establishes the following condition for equality: $\zcount(c,F) = \SE_{ij}(T)$ iff
\begin{equation}\label{eq:eqimp}
\text{ every cell in } C - \upop(C') \text{ occurs to the left of } c.
\end{equation}

 To prove (2), let $c, d \in  \cells(i,j,F)$ with $c$ lying to the right of $d$. Since $F(c) = F(d)$, it follows from Definition~\ref{def:qtrip} that if $(x,y,d)$ is a quinv-triple, then so is $(x,y,c)$. Thus $\zcount(c,F) \geq \zcount(d,F)$.

For (3), suppose that $\zcount(c,F) = \SE_{ij}(T)$ for all $i, j$, and all cells $c \in \cells(i,j,F)$, To prove $F=T$, it suffices to show that the entries in each row of $F$ weakly increase from left to right. We proceed by induction; this holds for the $n^{\text{th}}$ row of $F$ since all its entries are $\boxed{n}$. Now suppose that the weakly increasing condition holds for all rows strictly below the $i^{\text{th}}$ row, we claim that it also holds for the $i^{\text{th}}$ row. If not, then there exists $j \geq 1$ and two cells $c \in \cells(i,j,F)$ and $d \in \cells(i,j-1,F)$ such that $c$ occurs to the left of $d$. In view of \eqref{eq:eqimp}, $d \notin \{\up[y]: y \in C'\}$ (notation as in proof of part (1)). Thus the cell $\down[d]$ must contains the entry $j+1$. On the other hand, $\down[c]$ contains an integer greater than $F(c) = j+1$. This contradicts the assumption that the entries in the $(i+1)^{\text{th}}$ row are weakly increasing from left to right. This completes the induction argument. 
Conversely if $F=T$, it is clear that \eqref{eq:eqimp} holds for $c \in \cells(i, j, F)$. 
\end{proof}

  \subsection{Definition of $\psi_{\quinv}$} We now have all the ingredients in place to define $\psi_{\quinv}$. Let $\lambda$ be a partition, $F \in \csf$ and $T =\rsort(F)$. For each $1 \leq i \leq j+1 \leq n$, consider the sequence
  \begin{equation}\label{eq:lamij}
    \Lambda_{ij} = (\zcount(c,F): \; c \in \cells(i,j,F) \text{ traversed right to left in row } i).
    \end{equation}
In Figure~\ref{fig:zcfig}, this amounts to reading the entries of a fixed colour from right to left in a given row of $\zcount(\cdot, F)$.
  By Proposition~\ref{prop:zcounts}, $\Lambda_{ij}$  is a weakly decreasing sequence, with values bounded above by $\SE_{ij}(T)$. Further \eqref{eq:cellcard} states that this sequence has exactly $\NE_{ij}(T)$ terms. Thus, $\Lambda_{ij}$ may be viewed as a partition fitting into the $\NE_{ij}(T) \times \SE_{ij}(T)$ rectangle. Since $\SE_{ij} =0$ for $i=j+1$, $\Lambda_{ij}$ is the zero sequence in this case. We drop the pairs $(j+1,j)$ to conclude that if $\Lambda = (\Lambda_{ij}: 1 \leq i \leq j <n)$, then $(T, \Lambda) \in \pop$. We define \[\psi_{\quinv}(F) := (T, \Lambda).\] 
  Clearly, $x^F = x^T$ and \eqref{eq:qwtquinv} implies $\quinv(F) = |\Lambda|$, establishing (1) of Theorem~\ref{thm:mainthm} for $v=\quinv$. The remaining properties of $\psi_{\quinv}$ will be established in section~\ref{sec:compf}.

\section{inv and refinv triples}\label{sec:invreftrip}
\subsection{}
We now turn to the definition of $\psi_{\inv}$. While we may anticipate doing this via a modification of the foregoing arguments, replacing quinv-triples with Haglund-Haiman-Loehr's {\em inv-triples} \cite{HHL-I}, that turns out not to work out-of-the-box. 
We consider instead:
\begin{definition}\label{def:refinvdef}
Let $\lambda$ be a column composition. A {\em refinv-triple} (or ``reflected inv-triple'') for $F \in \csf$
is a triple $(x,y,z)$ in $\dgh(\lambda)$ where:
\begin{enumerate}
\item[(i)] $x, z \in \dg(\lambda)$ with $z$ to the left of $x$ in the same row,
\item[(ii)] $y = \down[x]$,
\item[(iii)]  $F(x) < F(z) < F(y)$, where $F(y):=\infty$  if $y \not\in \dg(\lambda)$.
\end{enumerate}
Let $\refinv(F)$ denote the number of refinv-triples of $F$. \qed
\end{definition}

Refinv triples were originally defined for partition shapes\footnote{He doesn't name them though - our choice of name is motivated by the close parallel to inv triples.} by Zelevinsky in his Russian translation of the first edition of Macdonald's monograph  - see \cite[(AZ.6), Appendix to Chapter II]{MacMainBook}, \cite[\S 2.2]{Kirillov_newformula} - predating their inv and quinv cousins (for CSFs) by several years. 
Figure~\ref{fig:triplesconfig} exhibits the configurations of $\quinv, \inv$ and $\refinv$ triples.\footnote{We do not formally define the notion of inv-triples here since we do not use it, directing the interested reader to \cite{HHL-I} instead.} 

We now have the following key relationship:
\begin{proposition}\label{prop:refinv}
Let $\lambda$ be a partition and $F \in \csf$. Then $\inv(F) = \refinv(F)$.
\end{proposition}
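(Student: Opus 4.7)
The plan is to expand both $\inv(F)$ and $\refinv(F)$ as sums of pairwise contributions and equate them via a reindexing argument that crucially uses the partition shape of $\lambda$. Since $F$ is column-strict, every cell outside the top row is automatically a descent, so $\Des(F) = \{u \in \dg(\lambda) : u \text{ is not in row } 1\}$. Within any single row, summing arms and summing coarms each yield $\binom{\lambda_i}{2}$, hence
\[
\sum_{u \in \Des(F)} \arm(u) \;=\; \sum_{u \in \Des(F)} \coarm(u).
\]

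Next, split $|\Inv(F)| = A_1(F) + A_2(F)$ along the two clauses of the Inv-attack relation: $A_1$ counts same-row inversions, and $A_2$ counts the diagonal inversions $(u,v)$ where $u$ sits one row below $v$ and strictly to the right. On the refinv side, classify each pair of cells $(x,z)$ in the same row with $z$ left of $x$ into three types: (A) $F(x) \geq F(z)$ (a non-inversion); (B) $F(x) < F(z) < F(\down[x])$ (a refinv-triple, with $F(\down[x]) = \infty$ if $\down[x] \notin \dg(\lambda)$); (C) $F(x) < F(z)$ with $\down[x] \in \dg(\lambda)$ and $F(z) \geq F(\down[x])$. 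Since types B and C together exhaust the same-row inversions, $\refinv(F) = A_1(F) - C(F)$, where $C(F)$ denotes the number of type-C pairs.

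The heart of the argument is the identity $A_2(F) + C(F) = \sum_{w} \coarm(w)$, the sum being over $w \in \dg(\lambda)$ not in row $1$. To establish it, reindex both terms by the \emph{lower} cell of each configuration: set $w = u$ in $A_2$, and $w = \down[x]$ (so $x = \up[w]$) in $C$. For a fixed such $w$, the $A_2$ contribution comes from cells $v$ in the row just above $w$, strictly left of $w$'s column, with $F(v) < F(w)$; the $C$ contribution comes from cells $z$ in the same row and column range with $F(z) \geq F(w)$, where column-strictness ($F(\up[w]) < F(w)$) makes the condition $F(z) > F(\up[w])$ from the definition of refinv automatic. These two subsets partition all cells in that row strictly left of $w$'s column, and because $\lambda$ is a partition this entire set lies in $\dg(\lambda)$ and has cardinality $\coarm(w)$. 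Combining everything,
\[
\inv(F) = A_1(F) + A_2(F) - \sum_{u \in \Des(F)} \arm(u) = A_1(F) - C(F) = \refinv(F).
\]

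The main obstacle is the joint reindexing of $A_2$ and $C$ by the common lower cell $w$, coupled with the realization that column-strictness dissolves the awkward condition $F(z) > F(\up[w])$ and allows the two types of configuration to glue cleanly into $\coarm(w)$. The partition hypothesis is essential here: for a general column composition, the row above $w$ may be missing some of the coarm cells, and a small example (e.g.\ the column composition with column lengths $(1,2)$) already shows that $\inv \neq \refinv$ in that generality.
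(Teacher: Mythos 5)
Your proof is correct and follows essentially the same route as the paper's: both arguments reduce to the identity $|\Inv(F)| - \refinv(F) = \sum_{u \in \Des(F)} \coarm(u)$ (your $A_2(F)+C(F)=\sum_{w}\coarm(w)$, combined with $\refinv(F)=A_1(F)-C(F)$, is exactly this) and then use that $\Des(F)$ is a union of complete rows to trade $\sum\coarm$ for $\sum\arm$. The only difference is presentational: the paper performs the count by summing the Haglund--Haiman--Loehr indicator quantity $I(y,z)+I(z,x)-I(y,x)$ over triples anchored at cells below the first row, whereas you classify the same configurations directly into your types A, B, C and reindex by the lower cell $w$; the underlying combinatorics is identical.
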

\begin{figure}
  \begin{center}
    \begin{tikzpicture}
      \ytableausetup{mathmode, smalltableaux}
      \draw(0,0)node {\ydiagram[*(yellow)]{1,1}};
      \draw(2,0.2)node{\ydiagram[*(yellow)]{1}};
      \draw[dashed](0.3,0.2)--(1.7,0.2);
    \end{tikzpicture}
    \qquad
        \begin{tikzpicture}
      \ytableausetup{mathmode, smalltableaux}
      \draw(0,0)node {\ydiagram[*(cyan)]{1,1}};
      \draw(2,-0.2)node{\ydiagram[*(cyan)]{1}};
      \draw[dashed](0.3,-0.2)--(1.7,-0.2);
    \end{tikzpicture}
\qquad
        \begin{tikzpicture}
      \ytableausetup{mathmode, smalltableaux}
      \draw(2,0)node {\ydiagram[*(orange)]{1,1}};
      \draw(0,0.2)node{\ydiagram[*(orange)]{1}};
      \draw[dashed](0.3,0.2)--(1.7,0.2);
    \end{tikzpicture}
        \caption{(left to right) Configuration of quinv, inv and refinv triples. }
        \label{fig:triplesconfig}
  \end{center}
\end{figure}
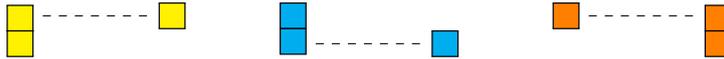

\begin{proof}
Our starting point is \cite[\S 2]{HHL-I} which shows that $\inv(F)$ coincides with the number of inv-triples of $F$ (in fact for all fillings $F$). We repurpose their argument below to prove that
\begin{equation}\label{eq:refinveq}
\refinv(F) = |\Inv(F)| - \sum_{u \in \Des(F)} \coarm(\up[u])
\end{equation}
for $F \in \csf$. Observe that 
\begin{equation}\label{eq:coarmup}
\coarm(\up[u])= \coarm(u) \text{ for } u \in \Des(F).
\end{equation}
Now, the descent set $\Des(F)$ is a union of rows; it comprises all cells of $\dg(\lambda)$ except those in the first row. This implies that \[\sum_{u \in \Des(F)} \arm(u) = \sum_{u \in \Des(F)} \coarm(u). \] 
Equations~\eqref{eq:inveq}, \eqref{eq:refinveq} and \eqref{eq:coarmup} complete the proof. It thus only remains to establish \eqref{eq:refinveq}.
Following \cite{HHL-I}, we fix $F \in \csf$ and define for $u, v \in \dgh{\lambda}$:
\begin{equation}
I(u, v)=
\begin{cases}
1 & \text{if   $  F(u) > F(v) $ }\\
0 & \text{if   $  F(u) \leq F(v) $. }
\end{cases} \notag  
\end{equation}  
Let $\mathcal{R}$ denote the set of triples  $(x,y,z)$ of cells in $\dgh(\lambda)$ satisfying properties (i) and (ii) of Definition~\ref{def:refinvdef}; clearly refinv-triples form a subset of $\mathcal{R}$. 

The column strictness of $F$ implies that for any triple $(x,y,z) \in \mathcal{R}$, $I(y,x)=1 $, and $I(y, z) + I(z, x)  \neq 0$, in other words 
\[ \ihat:=I(y,z) + I(z, x) - I(y,x) = 0 \text{ or } 1.\]
It is further evident that $\ihat=1$ if and only if $(x,y,z)$ is a refinv-triple, i.e.,
\begin{equation}\label{eq:refI} \refinv(F) = \sum_{(x,y,z) \in \mathcal{R}} \ihat 
\end{equation}
We decompose  $\mathcal{R} = \mathcal{T} \sqcup \mathcal{T}^\dag$ where $\mathcal{T}$ (resp. $\mathcal{T}^\dag$) comprises triples $(x,y,z) \in \mathcal{R}$ with $y \in \dg(\lambda)$ (resp. $y \in \dgh(\lambda) - \dg(\lambda)$).   
Using equation~\eqref{eq1.0.6}, we observe the following facts:
\begin{enumerate}
    \item if $(x,y,z) \in \mathcal{T}$, then  $I(y,z) + I(z,x)$ is the contribution to $|\Inv(F)|$ from the pairs $(y, z)$ and $(z, x)$.
    \item if $(x,y,z) \in \mathcal{T}^\dag$, then $I(y,z) + I(z, x) - I(y,x) = I(z, x)$, and this is the contribution to $|\Inv(F)|$ from the pair $(z, x)$. 
\end{enumerate}
Now, any pair of cells $(u, v)$ in $\Inv(F)$ can be uniquely ``completed" to a triple $(x,y,z) \in \mathcal{R}$, with $(x,y,z)=(\up[u], u,v)$ if $u, v$ are in consecutive rows and $(x,y,z) = (v, \down[v], u)$  if they are in the same row.  This implies 
\begin{align} 
\label{eqrefinv 1.1}
    |\Inv(F)| &= \sum_{(x, y, z) \in \mathcal{T}} \left(I(y,z) + I(z,x)\right) +  \sum_{(x, y, z) \in \mathcal{T}^\dag}  \ihat
\end{align}
From \eqref{eq:refI} and \eqref{eqrefinv 1.1}, we obtain
\[  |\Inv(F)| - \refinv(F) = \sum_{(x, y, z) \in \mathcal{T}} I(y,x) = |\mathcal{T}|\]
We may rewrite the sum  above as
\[ \sum_{(x, y, z) \in \mathcal{T}} I(y,x) = \sum_{y \in \Des(F)} \sum_{\substack{z \\(\up[y], y, z) \in \mathcal{T}}} 1 = \sum_{y \in \Des(y)} \coarm(y^{\uparrow}).\] 
This finishes the proof of \eqref{eq:refinveq}.
\end{proof}

\subsection{$\zcb$, $\zcount$ and the proof of the main theorem}\label{sec:zcb}
By analogy with Definition~\ref{def:zcdef}, given a column composition $\lambda$, $F \in \csf$ and $c \in \dg(\lambda)$, define
\begin{equation}\label{eq:zcbdef}
\zcb(c, F) := \text{ number of refinv-triples } (x,y,z) \text{ in } F \text{ with } z=c.
\end{equation}
In light of Proposition~\ref{prop:refinv}, it is clear that
\begin{equation}\label{eq:qwtinv}
\sum_{c \in \dg(\lambda)} \zcb(c,F) = \inv(F)
\end{equation}
for $\lambda$ a partition and $F \in \csf$. We have the following relation between $\zcb$ and $\zcount$:
\begin{proposition}\label{prop:zcandzcb}
  Let $\lambda$ be a partition, $F \in \csf$ and $T=\rsort(F)$. Let $1 \leq i \leq j+1 \leq n$ and $c \in \cells(i,j,F)$. Then
  \[\zcount(c,F) + \zcb(c,F) = \SE_{ij}(T).\]
\end{proposition}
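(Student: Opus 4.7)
The plan is to rerun the cell-counting argument from the proof of Proposition~\ref{prop:zcounts}(1), but now for refinv-triples, and observe that the two counts partition the same underlying set of ``admissible'' cells in row $i$.

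Fix $c \in \cells(i,j,F)$, so $F(c)=j+1$, and set $T=\rsort(F)$. Borrowing notation from the proof of Proposition~\ref{prop:zcounts}, let
\[ C = \bigcup_{k<j} \cells(i,k,F), \qquad C' = \bigcup_{k \leq j} \cells(i+1,k,F).\]
That proof shows that the set of cells $x$ in row $i$ satisfying $F(x) \leq j$ together with $F(\down[x]) \geq j+2$ (with $F(\down[x])=\infty$ if $\down[x] \notin \dg(\lambda)$) is precisely $C - \upop(C')$, and this set has cardinality $\SE_{ij}(T)$.

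Next, I would translate both statistics into this language. Unpacking Definition~\ref{def:qtrip}, a quinv-triple $(x,y,z)$ with $z=c$ forces $y=\down[x]$, $F(x) \leq j$, $F(y) \geq j+2$, and $x$ strictly to the left of $c$ in row $i$; hence $\zcount(c,F)$ equals the number of cells of $C - \upop(C')$ that lie strictly to the left of $c$ (this is exactly what was established in Proposition~\ref{prop:zcounts}(1)). The definition of a refinv-triple (Definition~\ref{def:refinvdef}) imposes the identical conditions on values and on $y$, but requires $x$ to lie strictly to the \emph{right} of $c$ in row $i$. Therefore $\zcb(c,F)$ equals the number of cells of $C - \upop(C')$ that lie strictly to the right of $c$.

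Finally, since $F(c) = j+1 > j$, we have $c \notin C$, hence $c \notin C - \upop(C')$. Every cell of $C - \upop(C')$ thus lies strictly to one side of $c$ in row $i$, giving the disjoint decomposition
\[ C - \upop(C') \;=\; \{x \in C - \upop(C') : x \text{ left of } c\} \;\sqcup\; \{x \in C - \upop(C') : x \text{ right of } c\}. \]
Taking cardinalities yields $\zcount(c,F) + \zcb(c,F) = |C - \upop(C')| = \SE_{ij}(T)$, as required. There is no real obstacle here; the proof is essentially a bookkeeping reuse of Proposition~\ref{prop:zcounts}, with the only substantive observation being that $c$ itself is excluded from the admissible set because $F(c)=j+1$, so the left-of-$c$ and right-of-$c$ counts exhaust $C-\upop(C')$.
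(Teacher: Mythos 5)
Your proof is correct and follows essentially the same route as the paper: both identify the admissible set $C - \upop(C')$ of cardinality $\SE_{ij}(T)$ from the proof of Proposition~\ref{prop:zcounts}(1), observe that $\zcount(c,F)$ and $\zcb(c,F)$ count its elements to the left and right of $c$ respectively, and conclude by noting these two subsets partition it. The only difference is that you make explicit the (true but implicit in the paper) remark that $c$ itself is excluded from the admissible set since $F(c)=j+1$.
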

\begin{proof}
Let $c \in \cells(i,j,F)$. We recall the argument that proved part (1) of Proposition~\ref{prop:zcounts}. Consider the sets $C, C'$ from \eqref{eq:eqc}, \eqref{eq:eqcprime} and let \[\mathfrak{C} := C - \upop(C').\]
As shown in the proof of Proposition~\ref{prop:zcounts}, $\zcount(c,F)$ is number of $x \in \mathfrak{C}$ which occur to the left of $c$. A similar argument, appealing now to the definition of $\zcb$ in \eqref{eq:zcbdef} shows that $\zcb(c,F)$ is the number of $x \in \mathfrak{C}$ which occur to the right of $c$. Thus, the sum $\left(\zcount(c,F) + \zcb(c,F)\right)$ is exactly the cardinality of $\mathfrak{C}$. Equation~\eqref{eq:cupc} finishes the proof.
\end{proof}

The following proposition readily follows from Propositions \ref{prop:zcounts}  and  \ref{prop:zcandzcb}.
\begin{proposition}\label{prop:barzcounts}
  Let $\lambda$ be a partition,  $F \in \csf$ and $T = \rsort(F)$. Fix $1 \leq i \leq j+1 \leq n$.
  \begin{enumerate}
  \item If $c \in \cells(i,j,F)$, then $\zcb(c,F) \leq \SE_{ij}(T)$.
  \smallskip\item If $c, d \in  \cells(i,j,F)$ with $c$ lying to the left of $d$, then \[\zcb(c,F) \geq \zcb(d,F).\]
    \item Further,  $\zcb(c,F)=0$ for all $i, j$ and all cells $c \in \cells(i,j,F)$ iff $F=T$.
  \end{enumerate}
 \end{proposition}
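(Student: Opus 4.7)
The plan is to derive all three assertions as immediate algebraic consequences of Proposition~\ref{prop:zcandzcb} combined with the analogous statements for $\zcount$ in Proposition~\ref{prop:zcounts}. The key identity to invoke throughout is
\[\zcb(c,F) = \SE_{ij}(T) - \zcount(c,F)\]
for every $c \in \cells(i,j,F)$, which turns each claim about $\zcb$ into a mirrored claim about $\zcount$.

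For part (1), I would note that $\zcount(c,F)$ is by definition a non-negative integer (it counts refinv-free quinv-triples of a specific shape), so the identity above gives $\zcb(c,F) \leq \SE_{ij}(T)$ with no further work. For part (2), suppose $c$ lies to the left of $d$ in $\cells(i,j,F)$. Then $d$ lies to the right of $c$, so Proposition~\ref{prop:zcounts}(2) yields $\zcount(d,F) \geq \zcount(c,F)$. Subtracting both sides from $\SE_{ij}(T)$ reverses the inequality and gives $\zcb(c,F) \geq \zcb(d,F)$, as required. Note that the direction of monotonicity flips between $\zcount$ (decreasing when read right to left) and $\zcb$ (decreasing when read left to right), which matches the mirror symmetry between quinv- and refinv-triples visible in Figure~\ref{fig:triplesconfig}.

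For part (3), the identity $\zcb(c,F) = \SE_{ij}(T) - \zcount(c,F)$ shows that $\zcb(c,F) = 0$ for all $i,j$ and all $c \in \cells(i,j,F)$ if and only if $\zcount(c,F) = \SE_{ij}(T)$ for the same range, which by Proposition~\ref{prop:zcounts}(3) happens exactly when $F = T$.

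There is no real obstacle here; the entire content was established in Propositions~\ref{prop:zcounts} and \ref{prop:zcandzcb}. The only point requiring a moment of care is keeping track of the reversal of orientation in part (2), since the natural ``right-to-left'' reading that makes $\zcount$ weakly decreasing translates, under complementation inside the rectangle of size $\NE_{ij}(T)\times \SE_{ij}(T)$, into the ``left-to-right'' reading that makes $\zcb$ weakly decreasing. This mirrored behavior is in fact what one expects combinatorially from the box-complementation involution $\bcomp$ of Theorem~\ref{thm:mainthm}(3), and it will be reused later when constructing $\psi_{\inv}$ in parallel with $\psi_{\quinv}$.
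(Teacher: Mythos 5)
Your proof is correct and follows exactly the route the paper intends: the paper simply states that the proposition ``readily follows'' from Propositions~\ref{prop:zcounts} and \ref{prop:zcandzcb}, and your write-up supplies precisely the subtraction argument $\zcb(c,F) = \SE_{ij}(T) - \zcount(c,F)$ that makes this explicit, including the correct orientation reversal in part (2). (One trivial slip: $\zcount(c,F)$ counts quinv-triples with $z=c$, not ``refinv-free quinv-triples,'' but all you need is its non-negativity, which holds.)
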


\subsection{}
We may now define $\psi_{\inv}$ following the template of $\psi_{\quinv}$. Given a partition $\lambda$ and $F \in \csf$, let $T =\rsort(F)$. For each $1 \leq i \leq j < n$,  consider the sequence:
  \begin{equation}\label{eq:lamijbar}
  \overline{\Lambda}_{ij} = (\zcb(c,F): \; c \in \cells(i,j,F) \text{ traversed left to right in row } i).
  \end{equation}
   Recall also the definition of the partition $\Lambda_{ij}$ from \eqref{eq:lamij}. It follows from Propositions~\ref{prop:zcounts} and \ref{prop:zcandzcb} that $\overline{\Lambda}_{ij}$ is the box complement of $\Lambda_{ij}$ in the $\NE_{ij}(T) \times \SE_{ij}(T)$ rectangle. Letting $\overline{\Lambda} = (\overline{\Lambda}_{ij}: 1 \leq i \leq j<n)$, we define \[\psi_{\inv}(F) = (T,\overline{\Lambda}).\] As in the case of $\quinv$, we have $x^F = x^T$, and  $\inv(F) = |\overline{\Lambda}|$ by \eqref{eq:qwtinv}. This proves part (1) of Theorem~\ref{thm:mainthm} for $v=\inv$.

\section{Completing the proof of Theorem~\ref{thm:mainthm}}\label{sec:compf}
\subsection{}\label{sec:splice-br-compat} We continue using the notations of the previous two sections. The first assertion of Theorem~\ref{thm:mainthm} has already been established in sections~\ref{sec:qtrip},\ref{sec:invreftrip} following the definitions of $\psi_{\quinv}$ and $\psi_{\inv}$. Since by definition $\pr (\psi_v(F)) = T$ for $v=\inv, \quinv$,  Part (2A) of Theorem~\ref{thm:mainthm} follows. Part (3) of Theorem~\ref{thm:mainthm}  follows from the fact that $\Lambda$ and $\overline{\Lambda}$ are box complements of each other in the appropriate rectangles.
We now show that the diagrams in part (2B) of Theorem~\ref{thm:mainthm} are commutative - this will follow from a careful analysis of each elementary splice step of the $\dsplice$ map.

Let $\lambda$ be a partition and $F \in \csf$. Let $\fdag$ denote the filling obtained from $F$ by deleting cells containing $\boxed{n}$ as in \S\ref{sec:dsplice}. We view $\dg(\fdag)$ as a subset of $\dg(F)$.
Notice that we have 
\begin{equation}\label{eq:cfcfdag}
\cells(i,j,F) = \cells(i,j,\fdag) \text{ for } 0 \leq j < n-1,
\end{equation}
and for $c \in \cells(i,j,F)$, we have
\begin{equation}\label{eq:zcffdag}
  \zcount(c, F) = \zcount(c, \fdag)
  \text{ and } \zcb(c, F) = \zcb(c,\fdag)
  \end{equation}

Let $\sigma^{(j)}\, (j \geq 1)$ denote the column tuple obtained by reading the $j^{\text{\,th}}$ column of $\fdag$ from top to bottom. Let $j$ be any fixed column index such that $\len(\sigma^{(j+1)}) > \len(\sigma^{(j)})$ (and thus necessarily  $\len(\sigma^{(j+1)}) = \len(\sigma^{(j)}) +1$). The elementary splice step replaces the pair of columns $(\sigma^{(j)}, \sigma^{(j+1)})$ in $\fdag$ by $\splice(\sigma^{(j)}, \sigma^{(j+1)})$; we denote the resulting filling by $\ftilde$. 

There is a bijective correspondence $c \mapsto \ctilde\;$ between the cells of $\dg(\fdag)$ and $\dg(\ftilde)$ such that $\fdag(c) = \ftilde(\ctilde)$ described as follows: (a) if $c$ is not in the $j^{\text{\,th}}$ or $(j+1)^{\text{\,th}}$ columns of $\fdag$, then $\ctilde = c$ (b) if $c$ lies in one of these two columns, then $\ctilde$ is  the cell of $\ftilde$ to which the entry in $c$ moves under the splice operation (i.e., only the cells corresponding to the suffix portions of these columns in \eqref{eq:spliceeq} are swapped). We further note that this bijection between cells of $\dg(\fdag)$ and $\dg(\ftilde)$ extends to a bijection between the cells of the corresponding augmented diagrams.

Observing that the shapes of $\fdag$ and $\ftilde$ are column compositions and recalling the general notions of section \ref{sec:cwzc}, we now state the following key lemma.
\begin{lemma}\label{lem:keylem} 
With notation as above, for all cells $c$ of $\dg(\fdag)$ we have  \[\zcount(c, \fdag\,) = \zcount(\,\ctilde, \ftilde\,).\]
\end{lemma}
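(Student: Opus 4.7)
The argument proceeds by examining how an elementary splice step modifies the $\zcount$ function. Write $\sigma = \sigma^{(j)}$ of length $k$ and $\tau = \sigma^{(j+1)}$ of length $\ell = k+1$, and let $m$ be the index defined by \eqref{def:mdef}. Unpacking Definition~\ref{def:zcdef}, $\zcount(c, F)$ equals the number of cells $x$ lying in the same row as $c$, strictly to its left, such that $F(x) < F(c)$ and either $\down[x] \notin \dg(F)$ or $F(\down[x]) > F(c)$. Since the splice affects only columns $j$ and $j+1$, the analysis reduces to tracking contributions from those two columns. The crucial arithmetic input throughout is \eqref{eq:splicecor}, which gives $\sigma_r > \tau_r$ and $\sigma_r \geq \tau_{r+1}$ whenever $m \leq r \leq k$.

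First I would handle cells $c$ lying outside columns $j$ and $j+1$. If $c$ is in a column strictly to the left of $j$, nothing relevant changes. If $c$ is in a column strictly to the right of $j+1$, then a row-by-row inspection shows that the multiset of ordered pairs (entry at row $r$, entry at row $r+1$, with the convention $\infty$ when the latter is absent) arising from columns $j$ and $j+1$ is unchanged from $\fdag$ to $\ftilde$. This is immediate for rows $r < m$, follows from a coordinated swap for $m \leq r \leq k-1$, and reduces to a short verification in the boundary rows $r = k$ and $r = \ell$. Hence the total count of contributing $x$-cells coming from columns $j$ and $j+1$ is preserved for any such $c$.

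The genuinely interesting cases are those in which $c$ itself lies in column $j$ or $j+1$ and moves under splice. If the row index of $c$ is less than $m$, then $\ctilde = c$ and only unchanged columns lie to its left. If $c = (r, j)$ with $m \leq r \leq k$, then $\ctilde = (r, j+1)$ with $\ftilde(\ctilde) = \sigma_r$, and $\zcount(\ctilde, \ftilde)$ acquires a potential new contribution from the cell $(r, j)$ of $\ftilde$ whose entry is $\tau_r$ and whose cell-below-entry is $\tau_{r+1}$; this cell contributes iff $\tau_r < \sigma_r < \tau_{r+1}$, which is ruled out by $\sigma_r \geq \tau_{r+1}$. The symmetric case $c = (r, j+1)$ with $m \leq r \leq k$ is analogous: the potentially dropped contribution in $\fdag$ comes from cell $(r, j)$, which would require $\sigma_r < \tau_r$, and this fails because $\sigma_r > \tau_r$. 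Finally, if $c = (\ell, j+1)$ then $\ctilde = (\ell, j)$ and the contributing $x$-cells come from columns $p < j$ only, in both $\fdag$ and $\ftilde$, giving equal counts trivially. The main obstacle is the bookkeeping in these mixed cases, where the set of columns to the left of $c$ gains or loses the partner column $j$ under splice; what makes everything work is the maximality of $m$, whose direct consequence \eqref{eq:splicecor} is precisely what rules out the potentially problematic partner-column contributions.
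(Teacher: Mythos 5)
Your case division matches the paper's, and the cases where $c$ itself moves under the splice (your ``mixed cases'', the paper's Cases 3 and 5) are handled correctly, with exactly the right appeal to \eqref{eq:splicecor}. But there is a genuine gap at the single most delicate row, $r=m-1$, and it surfaces in two places. For $c$ strictly to the right of column $j+1$ you claim the multiset of ordered pairs (entry at row $r$, entry at row $r+1$) coming from columns $j$ and $j+1$ is unchanged, and that this is ``immediate for rows $r<m$''. At $r=m-1$ this is false: the row-$(m-1)$ entries $\sigma_{m-1},\tau_{m-1}$ stay put while the row-$m$ entries swap, so the pairs change from $(\sigma_{m-1},\sigma_m),(\tau_{m-1},\tau_m)$ to $(\sigma_{m-1},\tau_m),(\tau_{m-1},\sigma_m)$. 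Concretely, for the paper's example $\sigma=(1,5)$, $\tau=(2,3,4)$ one has $m=2$, and the pair multisets are $\{(1,5),(2,3)\}$ versus $\{(1,3),(2,5)\}$. What is true is only that, for every value $\gamma$, the \emph{number} of pairs $(a,b)$ with $a<\gamma<b$ is preserved; proving this is the nontrivial content of the lemma. One must check that the two pairs of open intervals $(\sigma_{m-1},\sigma_m),(\tau_{m-1},\tau_m)$ and $(\sigma_{m-1},\tau_m),(\tau_{m-1},\sigma_m)$ have the same union and the same intersection, which uses $\sigma_m>\tau_m>\sigma_{m-1}$ (the first inequality from \eqref{eq:splicecor}, the second from the maximality of $m$ in \eqref{def:mdef}). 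This inclusion--exclusion on intervals is the heart of the paper's Case 1 and is absent from your argument.

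The same row causes a second problem: for $c=(m-1,j+1)$ you assert $\ctilde=c$ and that ``only unchanged columns lie to its left''. Column $j$ lies to its left, and while its row-$(m-1)$ entry is unchanged, the entry \emph{below} it changes from $\sigma_m$ to $\tau_m$, so the potential contribution of the cell $(m-1,j)$ to $\zcount(c,\cdot)$ must be compared before and after the splice (the paper's Case 4). The count does come out equal, but again only because of $\sigma_m>\tau_m>\sigma_{m-1}$, not for free. The rest of your write-up is sound; supplying the interval comparison at row $m-1$ in both of these places would close the proof.
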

\begin{proof}
  The proof necessitates the verification of a handful of cases. We will establish a bijective correspondence between the sets of quinv-triples counted by $\zcount(c, \fdag)$ and $\zcount(\,\ctilde, \ftilde\,)$. 
  
  Suppose $c$ occurs in row $r$. Consider the cells of the augmented diagram of $\fdag$ with coordinates \[(r,j),\; (r,j+1),\; (r+1,j), \text{ and } (r+1, j+1)\] and let their entries be $\alpha_1, \alpha_2, \beta_1, \beta_2$ respectively (see Figure~\ref{fig:a1a2b1b2}). Let $\gamma$ be the entry in cell $c$.

\begin{figure}
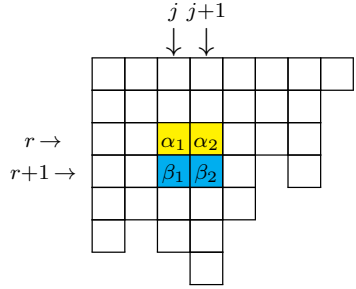

\begin{center}
\ytableausetup{mathmode, boxframe=normal, boxsize=1em}
\begin{ytableau}
\none & \none & \none & \none & \none[\scriptstyle j] & \none[\scriptstyle j+1] \\
\none & \none & \none & \none & \none[\downarrow] & \none[\downarrow] \\
 \none & \none & {} &  &   &  &  & & & \\
\none&\none &  & 
     &  &  &  &  &  \\
 \none[\scriptstyle r\,\rightarrow] & \none &  &  & *(yellow)\scriptstyle{\alpha_1} & *(yellow)\scriptstyle{\alpha_2} & 
&  &  \\ 
\none[\scriptstyle r+1\,\rightarrow] & \none &  &  & *(cyan)\scriptstyle{\beta_1} & *(cyan)\scriptstyle{\beta_2} & & \none & \\
\none & \none &  & 
     &  &  &   \\
   \none & \none & & \none & & \\
    \none & \none&  \none & \none & \none & 
  \end{ytableau} 
  \end{center}
  \caption{A schematic diagram showing a column composition $\fdag$ and the choice of $\alpha_i, \beta_i$.}\label{fig:a1a2b1b2}
  \end{figure}

 \medskip
  \noindent
  {\bf Case 1:} $c$ is not in columns $j$ or $j+1$. In this case, $c =\ctilde$. Consider quinv-triples in $\fdag$ of the form $(x,y,c)$. If $y=\ctilde[y]$ (and hence $x=\ctilde[x]$) or if $x \neq \ctilde[x]$ (and hence $y \neq \ctilde[y]$), it is clear that $(\ctilde[x], \ctilde[y], c)$ is a quinv-triple for $\ftilde$, and conversely. 
  
  It only remains to consider the case $x = \ctilde[x], y \neq \ctilde[y]$. Under this assumption, \eqref{def:mdef} and \eqref{eq:splicecor} imply that
    \begin{equation}\label{eq:mainineq}
      \beta_1 > \beta_2 > \alpha_1.
  \end{equation}

 Consider the triples $t_i$ in $\fdag$ formed by the cells containing $\alpha_i, \beta_i, \gamma$ for $i=1, 2$. Likewise in $\ftilde$, consider the triples $\widetilde{t}_i$ formed by the corresponding spliced cells, i.e., with entries $\alpha_1, \beta_2, \gamma$ and $\alpha_2, \beta_1, \gamma$ respectively.   Let $I_1 = (\alpha_1, \beta_1), I_2=(\alpha_2, \beta_2), \widetilde{I}_1 =(\alpha_1, \beta_2), \widetilde{I}_2=(\alpha_2, \beta_1)$ be the open intervals in $\mathbb{Z}$ with the indicated bounds. Observe that $t_k$ (resp. $\widetilde{t}_k$) is a quinv-triple for $\fdag$ (resp. $\widetilde{F}$) iff $\gamma \in I_k$ (resp. $\widetilde{I}_k$).

  We claim that the number of quinv-triples among the $t_i$ is the same as that among the $\widetilde{t}_i$ ($i=1,2$).
  The inequalities \eqref{eq:mainineq} readily imply the following equalities of sets:
  \[ I_1 \cup I_2 = \widetilde{I}_1 \cup \widetilde{I}_2 \text{ and } I_1 \cap I_2 = \widetilde{I}_1 \cap \widetilde{I}_2. \]

In particular, this implies that 
\[ |\{1 \leq k \leq 2: \gamma \in I_k\}| = |\{1 \leq k \leq 2: \gamma \in \widetilde{I}_k\}|\]
establishing our claim, and finishing the proof in this case.

\medskip
\noindent
{\bf Case 2:} $c$ is in the $j^{th}$ column and $c=\ctilde$. Then it clear that $(x,y,c)$ is a quinv-triple of $\fdag$ if and only if it is a quinv triple of $\ftilde$.

\medskip
\noindent
{\bf Case 3:}
  $c$ is in the $j^{th}$ column and $c\neq \ctilde$. We have $\gamma = \alpha_1 \geq \beta_2$ (from \eqref{eq:spliceeq}). If $(x,y,c)$ is a quinv-triple in $\fdag$, then $x,y$ lie in a column strictly to the left of the $j^{th}$ column; thus $(x,y,\ctilde)$ will be a quinv-triple in $\ftilde$. However, the spliced filling $\ftilde$ could potentially have an additional quinv triple, formed by the cells with entries $\alpha_2, \beta_2, \gamma$. But this is precluded by the inequality $\gamma \geq \beta_2$ above.

\medskip
\noindent
{\bf Case 4:} $c$ is in the $(j+1)^{th}$ column and $c = \ctilde$. If $(x,y,c)$ is a quinv-triple of $\fdag$ and $(x,y) = (\ctilde[x],\ctilde[y])$, then $(x,y,c)$ is a quinv-triple of $\ftilde$. It remains only to consider the case when  $(x,y,c)$ with $x,y$ in the $j^{th}$ column and $y \neq \ctilde[y]$ is a quinv-triple of $\fdag$. In this case, we have from \eqref{def:mdef}, \eqref{eq:splicecor}  that $\gamma = \alpha_2 < \beta_2 < \beta_1$ and $\alpha_1 < \beta_2$. Given these, we obtain that the cells with entries $(\alpha_1, \beta_1, \alpha_2)$ in $\fdag$ and $(\alpha_1, \beta_2, \alpha_2)$ in $\ftilde$ are both quinv-triples or are both non quinv-triples, depending on whether $\alpha_1 < \alpha_2$ or not.

\medskip
\noindent
{\bf Case 5:} $c$ is in the $(j+1)^{th}$ column and $c \neq \ctilde$. We have $\gamma = \alpha_2 < \beta_2 \leq \alpha_1$ (from \eqref{eq:spliceeq}). Thus the cells with entries $\alpha_1, \beta_1, \alpha_2$ of $\fdag$ do not form a quinv-triple. Thus in any quinv-triple of $\fdag$ of the form $(x,y,c)$, $x$ occurs strictly to the left of column $j$. These bijectively correspond to quinv-triples $(x,y,\ctilde)$ in $\ftilde$. 

\medskip
\noindent
This completes the proof.
\end{proof}

It follows from Lemma~\ref{lem:keylem} that for $j<n-1$ and $c \in \cells(i,j,\fdag)$, we have $\zcount(c, \fdag) = \zcount(\ctilde, \ftilde)$, where $\ftilde$ is the result of an elementary splice operation applied to $\fdag$. Iterating, we conclude that $\zcount(c, \fdag) = \zcount(c', \dsplice(F))$ where $c'$ is the cell corresponding to $c$ at the end of the succession of elementary splice moves. Using \eqref{eq:cfcfdag}, we conclude that for $j < n-1$, the association $c \mapsto c'$ is a bijection from $\cells(i,j,F) = \cells(i,j,\fdag)$ to $\cells(i,j,\dsplice(F))$. Further \eqref{eq:zcffdag} implies that  
\[ \zcount(c, F) = \zcount(c', \dsplice(F)) \]
for all $c \in \cells(i,j,F)$ with $j<n-1$. 
Thus the overlays $\Lambda_{ij}$ from \eqref{eq:lamij} remain unchanged in $F$ and $\dsplice(F)$ for $j < n-1$. This completes the proof that the diagram (2B) of Theorem~\ref{thm:mainthm} is indeed commutative for $\psi_v = \psi_{\quinv}$. We use \eqref{eq:boxbr} and part (3) of Theorem~\ref{thm:mainthm} to obtain the commutativity for $\psi_v = \psi_{\inv}$. \qed

\subsection{}
Finally, the sole assertion of Theorem~\ref{thm:mainthm} that remains to be proved is that $\psi_{\inv}$ and $\psi_{\quinv}$ are bijections. We describe the construction of $\psi_{\inv}^{-1}$. Given a partition $\lambda$ and $(T, \Lambda) \in \pop$ with $\Lambda = (\Lambda_{ij})$, construct the filling $F:=\psi_{\inv}^{-1}(T,\Lambda) \in \csf$ inductively row-by-row, from the bottom ($n^{th}$) row to the top as follows:

(a) fill all cells (if any) of the $n^{th}$ row  with the entry $n$.

(b) let $1 \leq i < n$; assuming that all rows of $F$ strictly below row $i$ have been completely determined, we will now determine the entries of row $i$. For each $i \leq j <n$, we must have $\NE_{ij}(T)$ many cells in row $i$ containing the entry $j+1$. We will determine the locations of the entries $j+1$ inductively, starting with $j= n-1$ and going down to $j= i$. Once done, we will fill the remaining cells in row $i$ with the entry $i$ and move up to row $i-1$. 

Fix $i \leq j <n$ and assume that the locations of all entries $j'+1$ for $j<j'<n$ in row $i$ of $F$ have been determined (vacuously if $j=n-1$). Call this partial filling $F_{ij}$.

The number of empty cells in the $i^{th}$ row of $F_{ij}$ is $T^{j+1}_i$, since we require $\rsort(F) = T$. Among these, it is clear that exactly $T^{j+1}_{i+1}$ of them will have a cell directly below containing an entry $\leq (j+1)$. Thus, the number of candidate cells in row $i$ in which we can potentially put a $j+1$ without violating the column strictness condition thus far is exactly $T^{j+1}_i - T^{j+1}_{i+1} = \NE_{ij}(T) + \SE_{ij}(T)$. 

Let $\cand(i,j)$ denote this collection of cells, i.e., $\cand(i,j)$ comprises cells $c$ in the $i^{th}$ row of $\dg(\lambda)$ such that:
\begin{enumerate}
\item[(i)] $c$ is empty in $F_{ij}$, and \item[(ii)] if there is a cell $d$ immediately below $c$ in $\dg(\lambda)$, then $F_{ij}(d) > (j+1)$. 
\end{enumerate}
Let $k = \NE_{ij}(T)$ and $\ell = \SE_{ij}(T)$. We label the  cells of $\cand(i,j)$ from right-to-left\footnote{This labelling should be done left-to-right when defining $\psi_{\quinv}^{-1}$. This is the only change required. The rest of the proof works with minimal changes.} as $0, 1, \cdots, (k+\ell-1)$. We now use the identification from equation~\eqref{eq:gamma-a} of partitions fitting inside a $(k \times \ell)$-box with strictly decreasing $k$-tuples of integers in $0, 1, \cdots, k+\ell-1$. Via this, the (given) partition $\Lambda_{ij}$ can be identified with a decreasing tuple $a_1 > a_2 > \cdots > a_k$. Consider the cells in $\cand(i,j)$ with the labels $(a_p)_{p=1}^k$ and put the entry $j+1$ into these. We proceed downward by induction until we reach $j=i$.

\noindent
(c) Finally, fill the remaining cells of row $i$ with the entry $i$. Proceed by downward induction on $i$ until all entries of $F$ are determined.

\medskip
We now show that $\psi_{\inv}(F) = (T, \Lambda)$. It is clear by the construction of $F$ above that $\rsort(F) = T$, since at each step of the process, we filled exactly $\NE_{ij}(T)$ cells in row $i$ with the entry $j+1$. It remains to show that the overlay corresponding to $F$ is exactly $\Lambda$. We have:
\begin{lemma}\label{lem:reflem} Let $1 \leq i \leq j < n$ and $c \in \cells(i,j,F)$. Then $(x,y,z)$  is a refinv-triple for $F$ with $z=c$ if and only if  $x \in \cand(i,j) - \cells(i,j,F)$, $x$ occurs to the right of $z$ and $y=\down[x]$.
\end{lemma}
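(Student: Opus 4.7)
The proof will be a direct unpacking of the construction of $\psi_{\inv}^{-1}$ followed by a comparison with the definition of a refinv-triple.

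First I would translate the two defining conditions of $\cand(i,j)$ into intrinsic conditions on the finished filling $F$. At the stage when we build the partial filling $F_{ij}$, we have already specified the entries $\geq j+2$ in row $i$ and completely determined all rows below $i$. Hence, for a cell $c'$ in row $i$: (i) $c'$ is empty in $F_{ij}$ iff $F(c') \leq j+1$, and (ii) $F_{ij}(\down[c']) > j+1$ iff $F(\down[c']) > j+1$ (when $\down[c'] \in \dg(\lambda)$). Removing the subset $\cells(i,j,F)$, whose members have $F$-value exactly $j+1$, we see that
\[
\cand(i,j) - \cells(i,j,F) \;=\; \{\,x \in \dg(\lambda):\ x \text{ is in row } i,\ F(x) \leq j,\ \text{and } F(\down[x]) > j+1\,\},
\]
where the last inequality is read as automatically true when $\down[x] \notin \dg(\lambda)$.

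Next I would compare this directly with the definition of a refinv-triple. For $z = c \in \cells(i,j,F)$ we have $F(c) = j+1$, so $(x,y,z)$ is a refinv-triple with $z = c$ iff $x$ lies in row $i$ to the right of $c$, $y = \down[x]$, and $F(x) < j+1 < F(y)$ (with the convention $F(y) = \infty$ when $y \notin \dg(\lambda)$). These three conditions are exactly: $x$ to the right of $z$, $y = \down[x]$, $F(x) \leq j$, and $F(\down[x]) > j+1$ (or $\down[x]$ outside). By the description of $\cand(i,j) - \cells(i,j,F)$ derived above, this is equivalent to $x \in \cand(i,j) - \cells(i,j,F)$ together with $y = \down[x]$ and $x$ to the right of $z$, proving both implications.

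There is no real obstacle here beyond being careful about what ``empty in $F_{ij}$'' means when the construction has reached stage $(i,j)$; the two conditions defining $\cand(i,j)$ are precisely engineered to record the constraints forced by column-strictness and by the fact that $\rsort(F) = T$. Once this translation is made, the equivalence in the lemma is immediate from the definition of a refinv-triple.
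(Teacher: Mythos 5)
Your proof is correct and follows essentially the same route as the paper's: both arguments reduce to the observation that at stage $(i,j)$ of the construction the rows below $i$ are fully determined and the cells of row $i$ still empty are exactly those with $F$-value $\leq j+1$, so that $\cand(i,j)-\cells(i,j,F)$ is precisely the set of cells $x$ in row $i$ with $F(x)\leq j$ and $F(\down[x])>j+1$, which matches the refinv-triple conditions when $F(z)=j+1$. Your write-up is somewhat more explicit than the paper's (which compresses the converse to ``follows similarly''), but there is no difference in substance.
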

\begin{proof} To prove the forward implication: since $F(z) = j+1$, we have (i) $F(x) \leq j$ and (ii) $F(y) > (j+1)$. By (i), the cell $x$ must have been empty at the stage when the candidates $\cand(i,j)$ were being picked. By (ii), we conclude that $x$ must indeed have been one of the candidate cells. That it occurs to the right of $z$ and $y=\down[x]$ follow from the definition of refinv-triples.  The converse follows similarly.
\end{proof}
We use the foregoing lemma to complete the proof. From the construction of $F$, we know that $\cells(i,j,F)$ are precisely the cells in $\cand(i,j)$ with labels $(a_p)_{p=1}^k$. Let $c$ denote the cell with label $a_p$. Since there are $a_p$ (resp. $k-p$) cells to its right in $\cand(i,j)$ (resp. in $\cells(i,j,F)$), we obtain from Lemma~\ref{lem:reflem} that
\[ \zcb(c, F) = a_p - (k-p) \]
Equation~\eqref{eq:gamma-a} completes the proof. \qed

\medskip
We remark that the construction of $\psi_{\quinv}^{-1}$ is entirely analogous. As mentioned already in the footnote, the  point of departure is the numbering of the cells of $\cand(i,j)$ - these are numbered $0, 1, 2, \cdots$ from left-to-right in this case. 
\subsection{}\label{sec:eg}
For example, let $n=4$, $\lambda = (10, 6, 4,0)$ and let $T, \Lambda$ be the GT pattern and overlay depicted in Figure~\ref{fig:gt-ssyt}. 
%% \[
%% T =\begin{ytableau}
%%   1  & 1 & 1 &1 &2 &2&2&3&4&4 \\
%%   2 &2&3&3 &3&4\\
%%   3 &3&4&4
%% \end{ytableau}\]
Then $\psi_{\quinv}^{-1}(T,\Lambda)$ is precisely the CSF $F$ of Figure~\ref{fig:zcfig}, i.e.,
\[\ytableausetup{mathmode, smalltableaux}
\psi_{\quinv}^{-1}(T,\Lambda) = \ytableaushort{1121212443,223334,3344}\]
while
\[\ytableausetup{mathmode, smalltableaux}
\psi_{\inv}^{-1}(T,\Lambda) = \ytableaushort{2111321442,332243,4433}\]

\subsection{CL bases: Proof of Proposition~\ref{prop:clbasis-csf-Et}}
Observe that equations~\eqref{eq:qwtquinv}, \eqref{eq:qwtinv} prove part (1) of Proposition~\ref{prop:clbasis-csf-Et} while part (2) is implied by Proposition~\ref{prop:zcandzcb}. Part (3) of Proposition~\ref{prop:clbasis-csf-Et} follows from the definition of the Chari-Loktev basis in equations~\eqref{eq:lij1}-\eqref{eq:clbasis-mon}, and from the definitions of the partition overlays in equations \eqref{eq:lamij} and \eqref{eq:lamijbar}. \qed

\begin{figure}
  \begin{center}
  \ytableausetup{mathmode, smalltableaux}
$F=\;$\begin{ytableau}
  *(cyan) 1  & *(yellow) 2 & *(cyan) 1 & *(yellow) 2 \\
 *(pink) 3 & *(green) 4  
  \end{ytableau} \qquad \quad
 $\zcount(\cdot,F)=\;$ \begin{ytableau}
  *(cyan) 0  & *(yellow) 1 & *(cyan) 0 & *(yellow) 2 \\
 *(pink) 0 & *(green) 1  
  \end{ytableau} \qquad \quad
$\zcb(\cdot,F)=\;$ \begin{ytableau}
  *(cyan) 0  & *(yellow) 1 & *(cyan) 0 & *(yellow) 0 \\
 *(pink) 0 & *(green) 0  
  \end{ytableau}
 \caption{Here $F \in \csf$ for $\lambda=(4,2,0,0)$ and $n=4$. The cellwise zcount and $\zcb$ values are also indicated.}
  \label{fig:zc-zcb-fig}
\end{center}
\end{figure}

\smallskip
\begin{example}
We exhibit the Chari-Loktev monomials of equations~\eqref{eq:bquinv} and \eqref{eq:binv} in some examples below. If
\ytableausetup{mathmode, smalltableaux}
$F=\;$\begin{ytableau}
  *(cyan) 1  & *(yellow) 2 & *(cyan) 1 & *(yellow) 2 \\
 *(pink) 3 & *(green) 4  
  \end{ytableau}, then
 $\zcount(\cdot,F)=\;$ \begin{ytableau}
  *(cyan) 0  & *(yellow) 1 & *(cyan) 0 & *(yellow) 2 \\
 *(pink) 0 & *(green) 1  
  \end{ytableau} and so 
  \[\mathfrak{b}_{\quinv}(F) = (E_{21}\otimes t^2)(E_{21}\otimes t)(E_{42}\otimes t).\]
Likewise, in this case, we have $\zcb(\cdot,F)=\;$ \begin{ytableau}
  *(cyan) 0  & *(yellow) 1 & *(cyan) 0 & *(yellow) 0 \\
 *(pink) 0 & *(green) 0  
  \end{ytableau} and
$\mathfrak{b}_{\inv}(F) = E_{21}\otimes t$.

\smallskip
\noindent
A larger example is afforded by the CSF in Figure~\ref{fig:zcfig}. For that $F$, we have:
\[ \mathfrak{b}_{\quinv}(F) = (E_{21}\otimes t^2)\,(E_{21}\otimes t)\,(E_{31}\otimes t^2) \,(E_{41}\otimes t)^2\,(E_{42}\otimes t)\,(E_{43}\otimes t^2)^2. \]
\end{example}

\section{Local Weyl modules and limit constructions}\label{sec:dirlim}
\subsection{} In this section, we show how the bijections of Theorem~\ref{thm:mainthm} may be applied to the study of the basic representation $L(\Lambda_0)$ (level 1 vacuum module) of the affine Kac-Moody algebra $\widehat{\mathfrak{sl}_n}$ \cite{kac}. Our main result is Proposition~\ref{prop:charcsf}, which gives an elegant  expression for the character of $L(\Lambda_0)$ in terms of column strict fillings, which seems not to have been noticed before. 

The corresponding theory in terms of partition overlaid patterns  appears in \cite{RRV-CLpop}. The thrust of the arguments below lies in translating this to the CSF formalism. We refer the reader to \cite{RRV-CLpop} for any unspecified details about the POP picture.

\subsection{} As mentioned earlier in Section~\ref{sec:locweyl}, the $q$-Whittaker polynomial $\wl$ is the graded character of the local Weyl module - this follows by comparing the monomial expansion \eqref{eq:ferm} with \cite[Prop. 2.1.4]{ChariLoktev-original}, or more transparently, by comparing \eqref{eq:fermpop} with \cite[Equation (54)]{RRV-CLpop}. 

Since local Weyl modules for the current algebra $\mathfrak{sl}_n[t]$ coincide with certain Demazure modules (the $\mathfrak{sl}_n$-invariant ones) of $L(\Lambda_0)$ \cite{ChariLoktev-original,fourier-littelmann}, one obtains the character of $L(\Lambda_0)$ as a limit of the characters of local Weyl modules. 

To formulate this precisely, we must first renormalize the $q$-Whittaker polynomial $\wl$. Given $\gamma = (c_1, c_2, \ldots, c_n) \in \mathbb{R}^n$, we view it as an element of the Cartan subalgebra of $\mathfrak{gl}_n$. Let $|\gamma| = \sum_{i=1}^n c_i$ and
\[ \lVert \gamma \rVert^2 := \sum_{i=1}^n c_i^2 - \frac{|\gamma|^2}{n}.\]
This is just the square norm of the projection\footnote{The projection being given by $\gamma - \frac{|\gamma|}{n} (1,1,\cdots,1)$.} of $\gamma$ to the Cartan subalgebra of $\mathfrak{sl}_n$. Given a partition $\lambda$ of length $<n$ with $|\lambda|$ divisible by $n$, the renormalized \qw polynomial is defined to be:
\[ \wlhat := q^{\lVert \lambda \rVert^2/2} \,\left(x_1x_2\cdots x_n\right)^{-|\lambda|/n}\; \wl[q^{-1}] \]
It is easy to see that $\wlhat$ is a symmetric, homogeneous Laurent polynomial of degree zero in the $x_i$, with $\mathbb{Z}_+[q]$-coefficients. Further, its constant term (independent of the $x_i$) is $1$.

We now state the following limit theorem:
\begin{proposition}\label{prop:limprop}
    Let $n \geq 2$. Let $\lambda$ be a partition of length $<n$, with $|\lambda|$ a multiple of $n$. Let $\theta=(2,1,1,\cdots,1,0)$ be the highest root of $\mathfrak{sl}_n$, i.e., the partition with $n-1$ nonzero parts and $|\theta|=n$. Then:
\begin{equation}\label{eq:eqlimprop}
\lim_{k \to \infty} \wlhat[\lambda+k\theta] = \frac{\Theta(X_n,q)}{\prod_{k\geq 1} (1-q^k)^{n-1}}
\end{equation}
where $\Theta(X_n, q)$ is the theta-function of the root lattice of $\mathfrak{sl}_n$:
\[ \Theta(X_n, q) = \sum_{\gamma \in \mathbb{Z}^n,\, |\gamma|=0} q^\frac{\lVert \gamma \rVert^2}{2} x^\gamma\]
\end{proposition}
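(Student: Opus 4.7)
The plan is to identify the right-hand side with the normalized character of the basic representation $L(\Lambda_0)$ of $\afsl$, and to obtain this character as a limit of suitably renormalized characters of $\sln$-invariant Demazure submodules.

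\emph{Demazure realization.} As recalled in the introduction, $\wloc[\lambda+k\theta]$ is isomorphic to an $\sln$-invariant Demazure submodule $V_k \subseteq L(\Lambda_0)$, and the affine Weyl group element giving rise to the extremal generator of $V_k$ is strictly increasing in Bruhat order in $k$. This yields a chain of inclusions $V_0 \hookrightarrow V_1 \hookrightarrow V_2 \hookrightarrow \cdots$ whose union is $L(\Lambda_0)$ \cite{fourier-littelmann,RRV-CLpop}. The graded character of $V_k$ with respect to the current-algebra grading, with $\sln$-weights tracked by the $x_i$, is precisely $W_{\lambda+k\theta}(X_n; q)$.

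\emph{Matching the normalization.} The renormalization that defines $\wlhat$ --- multiplication by $q^{\lVert\lambda\rVert^2/2}(x_1\cdots x_n)^{-|\lambda|/n}$ together with the substitution $q \mapsto q^{-1}$ --- is dictated by comparing two gradings on $V_k$: the current-algebra grading, in which the cyclic generator sits in grade $0$, and the affine energy grading inherited from $L(\Lambda_0)$, in which the highest weight vector of weight $\Lambda_0$ sits in grade $0$. The extremal vector of $V_k$ of $\sln$-weight $\lambda+k\theta$ has affine energy $\lVert\lambda+k\theta\rVert^2/2$, and the factor $(x_1\cdots x_n)^{-|\lambda+k\theta|/n}$ shifts the $x$-grading to total degree zero so as to match the standard character convention for the basic representation. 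After these adjustments, $\wlhat[\lambda+k\theta]$ equals the character of $V_k$ as a graded subspace of $L(\Lambda_0)$.

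\emph{Passing to the limit and the closed form.} Each weight space of $L(\Lambda_0)$ is finite-dimensional, and since $\bigcup_k V_k = L(\Lambda_0)$, every such weight space is eventually captured by some $V_k$. Coefficient-wise on each monomial $q^a\, x^\beta$, we therefore obtain
\[\lim_{k\to\infty} \wlhat[\lambda+k\theta] = \ch L(\Lambda_0).\]
The Frenkel--Kac vertex operator construction (equivalently, the Kac--Peterson theta function expansion of an affine character at level one) identifies $\ch L(\Lambda_0)$ with the right-hand side of \eqref{eq:eqlimprop}: the theta function $\Theta(X_n,q)$ encodes contributions from the $\sln$ root lattice translates of the vacuum, while the denominator $\prod_{k\geq 1}(1-q^k)^{n-1}$ is the principal string function arising from the Heisenberg subalgebra of $\afsl$ acting on $L(\Lambda_0)$.

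The main obstacle is the careful bookkeeping in the normalization step: reconciling the current-algebra $q$-grading on $\wloc[\lambda+k\theta]$ with the affine energy grading on $L(\Lambda_0)$, and confirming that the extremal-vector shift is exactly $\lVert\lambda+k\theta\rVert^2/2$ in energy and $|\lambda+k\theta|/n$ in each $x_i$. Once this matching is in place, the first equality follows from the monotone chain of Demazure inclusions, and the closed form on the right is a standard consequence of the Frenkel--Kac construction.
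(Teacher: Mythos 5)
Your proposal is correct and follows essentially the same route as the paper: both realize $\wloc[\lambda+k\theta]$ inside the chain of Demazure/local Weyl module inclusions with direct limit $L(\Lambda_0)$, observe that the renormalization defining $\wlhat[\lambda+k\theta]$ re-anchors the grading at the vector $v_{\Lambda_0}$ (equivalently, reconciles the current-algebra grading with the affine energy grading), and then identify the limiting character with the standard theta-function expression for $\ch L(\Lambda_0)$ (the paper cites Kac, Chapter 12, where you invoke Frenkel--Kac). Your bookkeeping of the energy shift $\lVert\lambda+k\theta\rVert^2/2$ and the $(x_1\cdots x_n)^{-|\lambda+k\theta|/n}$ factor is exactly the content the paper compresses into one sentence.
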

We recall from \cite[Chap. 12]{kac} that the right hand side is precisely the (normalized) character of the basic representation $L(\Lambda_0)$ of $\widehat{\mathfrak{sl}_n}$. We let
\[ \chil :=\frac{\Theta(X_n,q)}{\prod_{k\geq 1} (1-q^k)^{n-1}}.\]
\begin{proof}
   We recall from \cite{fourier-littelmann}, \cite[\S 6]{RRV-CLpop} that we have a chain of injections of $\mathfrak{sl}_n[t]$-modules:
\begin{equation}\label{eq:wloc-chain}
W_{\mathrm{loc}}(\lambda) \hookrightarrow W_{\mathrm{loc}}(\lambda+\theta) \hookrightarrow W_{\mathrm{loc}}(\lambda+2\theta) \hookrightarrow \cdots
\end{equation}
and that the direct limit of these modules is $L(\Lambda_0)$. The highest weight vector $v_{\Lambda_0}$ of $L(\Lambda_0)$ occurs as the unique (up to scaling) vector of highest grade in each $W_{\mathrm{loc}}(\lambda+k\theta)$; the renormalization $\wlhat[\lambda+k\theta]$ re-indexes the grading so as to make this a vector of lowest grade (with grade zero). The assertion now follows from the fact that $W_{\lambda+k\theta}(X_n; q)$ is the character of $W_{\mathrm{loc}}(\lambda+k\theta)$ for all $k$.
 \end{proof}

We refer the reader to \cite[\S 4]{fourier-littelmann} and \cite{FL-tplimit} for a more detailed discussion of the limit construction of Proposition~\ref{prop:limprop} in terms of Demazure modules, and \cite{magyar}, \cite{kuniba-et-al} for the ``crystals" point-of-view. Finally, an analogous formula to \eqref{eq:eqlimprop} involving the limit of modified Hall-Littlewood polynomials may be found in \cite[\S 1.3]{Kirillov_newformula}.

\subsection{} 
As shown in \cite[\S 6.1.1]{RRV-CLpop}, the chain of modules in equation~\eqref{eq:wloc-chain} is mirrored by a corresponding chain of inclusions between sets of partition overlaid patterns . The essential property of these inclusions that is relevant to our present context is summarized in the proposition below - the proof follows directly from the arguments of \cite[\S 5]{RRV-CLpop}. We refer the reader to \cite[\S 6]{RRV-CLpop} for the definition of the map $\mathcal{S}$ of the following proposition.
\begin{proposition}\label{prop:seqprops} Under the hypotheses of Proposition~\ref{prop:limprop}, 
    there is a sequence of injective maps:
    \begin{equation}\label{eq:popseq}
    \pop \stackrel{\mathcal{S}}{\to} \pop[\lambda+\theta] \stackrel{\mathcal{S}}{\to} \pop[\lambda+2\theta] \stackrel{\mathcal{S}}{\to} \cdots
    \end{equation}
    with the following properties:
    \begin{enumerate}
    \item Given $k \geq 0$ and $(T, \Lambda) \in \pop[\lambda+k\theta]$, define \[d(T,\Lambda) := \frac{\lVert \lambda+k\theta \rVert^2}{2} - |\Lambda|.\] Then $d(T,\Lambda) \geq 0$. 
    \smallskip\item If $(T',\Lambda') = \mathcal{S}(T,\Lambda)$, then $d(T,\Lambda) = d(T', \Lambda')$ and $\xtil^{T'} = \xtil^T$ where 
    \[ \xtil^F := x^F \left(x_1x_2\cdots x_n\right)^{-|\mu|/n} \]
    is the renormalized monomial weight of $F \in \csf[\mu]$ for any partition $\mu$ with $|\mu|$ a multiple of $n$. \qed
    \end{enumerate}
\end{proposition}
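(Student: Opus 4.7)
The plan is to adopt the explicit construction of the maps $\mathcal{S}$ from \cite[\S 6.1.1]{RRV-CLpop}, which realize the inclusions of local Weyl modules in \eqref{eq:wloc-chain} at the level of POP indexing sets. Given $(T, \Lambda) \in \pop[\lambda + k\theta]$, that construction produces $(T', \Lambda') \in \pop[\lambda + (k+1)\theta]$ by augmenting $T$ with an additional layer encoding the shift by $\theta$ and extending the overlay in a prescribed manner; injectivity is then immediate, since the added block of data is uniquely readable off the image.

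For part (1), I would interpret $|\Lambda|$ as the $q$-grade of the corresponding Chari--Loktev basis element in $\wloc[\lambda+k\theta]$, via Theorem~\ref{thm:clthm}. The maximum $q$-grade attained in $\wloc[\lambda+k\theta]$ coincides with the top $q$-degree of $\wl[\lambda+k\theta]$, and this top degree equals $\tfrac{1}{2}\lVert \lambda+k\theta \rVert^2$. This numerical fact is precisely what guarantees that the renormalized polynomial $\wlhat[\lambda+k\theta]$ defined just before Proposition~\ref{prop:limprop} is a genuine polynomial in $q$ (with non-negative powers) whose constant term is $1$; on the representation-theoretic side, it reflects the placement of the vacuum vector $v_{\Lambda_0}$ of $L(\Lambda_0)$ at the top of the current-algebra grading on $\wloc[\lambda+k\theta]$. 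Combined with the fermionic formula \eqref{eq:fermpop}, this yields $|\Lambda| \leq \tfrac{1}{2}\lVert \lambda+k\theta \rVert^2$, which is the desired inequality $d(T, \Lambda) \geq 0$.

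For part (2), I would verify the two compatibilities by direct bookkeeping from the definition of $\mathcal{S}$. Writing $\mu = \lambda + k\theta$ and $\mu' = \mu + \theta$, one has $|\mu'| = |\mu| + n$, so the renormalization factor $(x_1 \cdots x_n)^{-|\mu|/n}$ shifts by $(x_1 \cdots x_n)^{-1}$; this must cancel exactly against the shift in $x^T$ imposed by $\mathcal{S}$ (which is a Weyl-translate of $x^\theta$, not literally $x^\theta$). Likewise, the increment $|\Lambda'| - |\Lambda|$ must equal $\tfrac{1}{2}(\lVert \mu' \rVert^2 - \lVert \mu \rVert^2)$, and this is built into the recipe for extending the overlay. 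At the level of modules, both compatibilities correspond to the statement that the Demazure embedding $\wloc[\mu] \hookrightarrow \wloc[\mu']$ sends the top-grade vector (tracking $v_{\Lambda_0}$) on one side to the top-grade vector on the other, which after renormalization sit at grade zero on each side. The main obstacle is simply transcribing these verifications carefully from \cite[\S 5--6]{RRV-CLpop} into our present notation; no fundamentally new ideas are required beyond what is already there.
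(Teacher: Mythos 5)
Your proposal is correct and takes essentially the same route as the paper, which likewise defers the construction of $\mathcal{S}$ and the bookkeeping for part (2) to \cite[\S\S 5--6]{RRV-CLpop}, and justifies $d(T,\Lambda)\geq 0$ exactly as you do, via the realization of $\wloc[\lambda+k\theta]$ as a Demazure module in which the vacuum vector $v_{\Lambda_0}$ occupies the top grade $\lVert\lambda+k\theta\rVert^2/2$.
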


\noindent
We next consider the direct limit of \eqref{eq:popseq}. Let 
$\mathcal{P} := \sqcup_{k \geq 0}\, \pop[\lambda + k\theta]$. Define the equivalence relation $\sim$ on $\mathcal{P}$ as the symmetric, transitive closure of the relation   
\begin{equation}\label{eq:popreln}
    (T,\Lambda) \sim \mathcal{S}(T,\Lambda)
\end{equation} 
for all  $(T,\Lambda) \in \mathcal{P}$. It is clear from Proposition~\ref{prop:seqprops} that $\xtil^T$ and $d(T,\Lambda)$ only depend on the equivalence class of $(T,\Lambda)$.
We may view $\mathcal{P}/\!\!\sim$ as the direct limit of \eqref{eq:popseq}:
\[ \mathcal{P}/\!\!\sim \;= \lim_{\stackrel{\longrightarrow}{k}} \pop[\lambda+k\theta]. \]
Finally, we obtain from the above discussion and \cite[\S 5]{RRV-CLpop} the following combinatorial shadow of the character identity \eqref{eq:eqlimprop}:
\begin{equation}\label{eq:charpops}
 \chil = \sum_{(T,\Lambda) \in \mathcal{P}/{\sim}} \xtil^T q^{d(T,\Lambda)} 
\end{equation}
where the sum ranges over distinct representatives of the equivalence classes of $\sim$.

\subsection{}
We will now use Theorem~\ref{thm:mainthm} to replace POP with CSF in the arguments of the preceding subsection. 
Consider the sequence of injective maps:
\begin{equation}\label{eq:csfdl}
\csf[\lambda] \stackrel{s}{\to} \csf[\lambda+\theta] \stackrel{s}{\to} \csf[\lambda+2\theta] \stackrel{s}{\to}  \cdots 
\end{equation}
where for $F \in \csf[\lambda+k\theta]$, we define $s(F) \in \csf[\lambda+(k+1)\theta]$ to be the filling obtained by attaching to $F$ a leftmost column containing \[\ytableausetup{mathmode, smalltableaux} \ytableaushort{2,3,\cdot,\vdots,n}\] and a rightmost column containing $\ytableaushort{1}$. For example, if $n=4$ and $F =\ytableaushort{12,3}$, then $s(F) = \ytableaushort{{*(cyan) 2}12{*(cyan) 1},{*(cyan) 3}3,{*(cyan) 4}}$.

The following key proposition relates the two injections $\mathcal{S}$ (among partition overlaid patterns ) and $s$ (among column strict fillings):
\begin{proposition}\label{prop:cpcomm}
With notation as above, the following diagram commutes for all $k \geq 0$:
    \[\begin{tikzcd}
	\csf[\lambda+k\theta] &&& \csf[\lambda+(k+1)\theta] \\
	\\
	\pop[\lambda+k\theta] &&& \pop[\lambda+(k+1)\theta] 
	\arrow["{s}", from=1-1, to=1-4]
	\arrow["\psi_{\inv}", from=1-1, to=3-1]
	\arrow["{\mathcal{S}}"', from=3-1, to=3-4]
	\arrow["\psi_{\inv}", from=1-4, to=3-4]
\end{tikzcd}\]
\end{proposition}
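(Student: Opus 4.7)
The plan is to verify the diagram commutes by direct computation. Given $F \in \csf[\lambda+k\theta]$, set $\mu := \lambda + k\theta$ and compute $\psi_{\inv}(s(F))$ cell-by-cell, then match with $\mathcal{S}(\psi_{\inv}(F))$ as described in \cite[\S 6]{RRV-CLpop}. The cells of $s(F)$ partition naturally into three classes: (a) the new leftmost column, consisting of cells $(i, 1)$ with entry $i+1$ for $1 \leq i \leq n-1$; (b) the new rightmost cell at $(1, \mu_1 + 2)$ with entry $1$; and (c) shifted-original cells $(r, s+1)$ coming from $(r, s) \in \dg(F)$.

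For the GT-pattern coordinate, let $T = \rsort(F)$ and $T' = \rsort(s(F))$. The row multisets of $s(F)$ differ from those of $F$ by precisely one extra $1$ and one extra $2$ in row $1$, and one extra $i+1$ in row $i$ for $2 \leq i \leq n-1$. Translating to NE and SE differences yields $\NE_{ij}(T') = \NE_{ij}(T) + \delta_{ij}$ and $\SE_{ij}(T') = \SE_{ij}(T) + \delta_{i,1}$, matching the GT coordinate of $\mathcal{S}(T, \overline{\Lambda})$.

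For the overlay coordinate, I compute $\zcb(c', s(F))$ via case analysis on $c'$. No cell of the new leftmost column can serve as $x$ in a refinv-triple in $s(F)$, since the column-strict pair of entries $r+1, r+2$ (or $\infty$ at the boundary) yields the condition $F(x) < F(z) < F(\down[x])$ with no integer solution for $F(z)$. Thus: the new rightmost cell has $\zcb = 0$; for a shifted-original target $(r, s+1)$, refinv-triples with shifted-original $x$ correspond bijectively to those in $F$ with target $(r, s)$, while the new rightmost cell as $x$ contributes exactly one additional refinv-triple when $r = 1$ and $F((r, s)) \geq 2$; and for the new leftmost cell $(i, 1)$, the valid $x$'s are shifted-originals $(i, s'+1)$ with $F((i, s')) = i$ whose columns either terminate at row $i$ or have row-$(i+1)$ entry exceeding $i+1$, together with the new rightmost cell when $i = 1$. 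Grouping these $\zcb$-values into the overlays of $\psi_{\inv}(s(F))$ (reading $\cells(i, j, s(F))$ left-to-right) reproduces the $\mathcal{S}$-prescribed modification: off-diagonal positions with $i \geq 2, j > i$ are unchanged; row-$1$ off-diagonal positions ($i = 1, j \geq 2$) have each entry incremented by $1$; and diagonal positions ($j = i$) receive a prepended entry equal to $\SE_{ii}(T')$, with the remaining entries incremented by $1$ if $i = 1$ and unchanged if $i \geq 2$.

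The main obstacle is evaluating the prepended diagonal entry $\zcb((i, 1), s(F))$ and verifying it equals $\SE_{ii}(T')$. This reduces to the identity that, for any $F \in \csf$ with $\rsort(F) = T$, the number of cells $(i, s) \in \dg(F)$ with $F((i, s)) = i$ and either column-terminal at row $i$ or with row-$(i+1)$ entry $> i+1$ equals $T^i_i - T^{i+1}_{i+1} = \SE_{ii}(T)$. This follows from column-strictness: every $i+1$ in row $i+1$ of $F$ sits directly below a cell with entry $i$ in row $i$ (since the cell above it has entry $\geq i$ and $< i+1$), so subtracting the $T^{i+1}_{i+1}$ excluded cells from the $T^i_i$ cells of entry $i$ in row $i$ yields the claimed count. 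For $i = 1$ the new rightmost cell contributes one further refinv-triple, giving $\zcb((1, 1), s(F)) = \SE_{1,1}(T) + 1 = \SE_{1,1}(T')$. With this identity established, the cell-level comparison yields $\psi_{\inv}(s(F)) = \mathcal{S}(\psi_{\inv}(F))$, completing the proof.
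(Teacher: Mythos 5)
Your proposal is correct and takes the same route as the paper, which simply asserts that the commutativity is "a simple consequence of the definitions of the four maps" — you have carried out that unwinding in full, and your cell-by-cell computation of $\zcb(\cdot, s(F))$ (the bijection on refinv-triples for shifted cells, the extra triple from the new rightmost cell when $r=1$, and the identity $\zcb((i,1),s(F))=\SE_{ii}(T')$ via column-strictness) checks out. The only step not verifiable from within this paper is your description of how $\mathcal{S}$ acts on overlays, since $\mathcal{S}$ is defined only by reference to \cite[\S 6]{RRV-CLpop}; note, however, that your description is forced to be consistent with Proposition~\ref{prop:seqprops} (it preserves $d(T,\Lambda)$, since the total increment $\sum_i \SE_{ii}(T') + \sum_j \NE_{1j}(T) = \mu_1+1$ equals $\lVert\mu+\theta\rVert^2/2 - \lVert\mu\rVert^2/2$, and it preserves $\xtil^T$), which is a strong sanity check.
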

This proposition is a simple consequence of the definitions of the four maps involved in the commutative diagram. \qed

 An analogous statement can be made with $\quinv$ as well, but the map $\psi_{\quinv}^{-1} \,\mathcal{S} \,\psi_{\quinv}$ does not have as simple a description as the map `$s$' above in the case of $\inv$.

\subsection{}
Let $k \geq 1$ and define $\mathcal{C}_k(\lambda)$ to be the set of  $F \in \csf[\lambda+k\theta]$ such that the entry $\boxed{1}$ either occurs in the first column of $F$ or does not occur in its last column. We also set $\mathcal{C}_0(\lambda) :=\csf[\lambda]$ and \[ \mathcal{C} := \bigsqcup_{k \geq 0} \mathcal{C}_k(\lambda).\] 

By analogy to \eqref{eq:popreln}, we define an equivalence relation on the set \[\widehat{C} = \bigsqcup_{k \geq 0} \csf[\lambda+k\theta]\] by taking the symmetric, transitive closure of the relations $F \sim s(F)$ for all $F \in \widehat{C}$. 
It is clear from the definition of the map `$s$' that the elements of $\mathcal{C}$ form a set of distinct representatives of the equivalence classes of $\widehat{C}/\!\!\sim$.

Now, in conjunction with part 1 of Theorem~\ref{thm:mainthm} and equation~\eqref{eq:charpops}, Proposition~\ref{prop:cpcomm} implies the following result:

\begin{proposition}\label{prop:charcsf} With notation as above, we have
  \[  \chil = \sum_{k \geq 0} \sum_{F \in \mathcal{C}_k(\lambda)} \,\xtil^F \,q^{\frac{\lVert \lambda+k\theta \rVert^2}{2}-\inv(F)} \]   
\end{proposition}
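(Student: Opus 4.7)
The plan is to transfer the character formula on the POP side (equation~\eqref{eq:charpops}) to the CSF side via the bijections $\psi_{\inv}$ of Theorem~\ref{thm:mainthm}, using Proposition~\ref{prop:cpcomm} to ensure compatibility with the respective equivalence relations. The core observation is that everything required for such a transfer is already in place: $\psi_{\inv}$ preserves $x$-weights and converts $\inv$ to $|\Lambda|$, while commuting with the step maps $s$ and $\mathcal{S}$.

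First, I would assemble a bijection at the level of equivalence classes. Proposition~\ref{prop:cpcomm} asserts $\psi_{\inv} \circ s = \mathcal{S} \circ \psi_{\inv}$ on each stage of the chain, so the family of bijections $\{\psi_{\inv}: \csf[\lambda+k\theta] \to \pop[\lambda+k\theta]\}_{k \geq 0}$ assembles into a bijection $\widehat{C}/{\sim} \;\to\; \mathcal{P}/{\sim}$ of direct limits. Next, I would exhibit $\mathcal{C}$ as a set of distinct representatives of $\widehat{C}/{\sim}$ (the paper essentially notes this, but I would verify it): for $k \geq 1$, any $F \in \csf[\lambda+k\theta]$ lies in the image of $s$ iff its leftmost column is $(2,3,\ldots,n)^T$ (equivalently, $\boxed{1}$ does not occur in the first column) and its rightmost column (a single cell, since the last column of $\lambda+k\theta$ has length $1$ for $k \geq 1$) contains $\boxed{1}$; the complement of this image is precisely $\mathcal{C}_k(\lambda)$, and for $k=0$ we take $\mathcal{C}_0(\lambda) = \csf[\lambda]$. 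Hence $\mathcal{C}$ meets every equivalence class exactly once.

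Next, I would translate the summand. By part~1 of Theorem~\ref{thm:mainthm}, if $F \in \csf[\lambda+k\theta]$ and $\psi_{\inv}(F) = (T,\Lambda)$, then $x^F = x^T$, so the renormalized weights agree: $\xtil^F = \xtil^T$. Similarly $\inv(F) = |\Lambda|$, so by definition of $d$ in Proposition~\ref{prop:seqprops},
\[ d(T,\Lambda) \;=\; \frac{\lVert \lambda+k\theta \rVert^2}{2} - |\Lambda| \;=\; \frac{\lVert \lambda+k\theta \rVert^2}{2} - \inv(F). \]
Combining these identifications with the bijection $\mathcal{C} \;\leftrightarrow\; \mathcal{P}/{\sim}$, equation~\eqref{eq:charpops} rewrites as
\[ \chil \;=\; \sum_{k \geq 0}\; \sum_{F \in \mathcal{C}_k(\lambda)} \xtil^F \, q^{\frac{\lVert \lambda+k\theta \rVert^2}{2} - \inv(F)}, \]
which is the claimed formula.

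There is no real obstacle beyond careful bookkeeping; the potentially delicate step is the verification that $\mathcal{C}_k(\lambda)$ is exactly the complement of $s\bigl(\csf[\lambda+(k-1)\theta]\bigr)$, which reduces to the shape computation $(\lambda+k\theta)'_{\lambda_1+2k} = 1$ for $k \geq 1$ together with the characterization of strictly increasing length-$(n-1)$ columns of entries in $[n]$ not containing $\boxed{1}$ (there is exactly one such column, namely $(2,3,\ldots,n)^T$).
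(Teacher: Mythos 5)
Your proposal is correct and follows the same route as the paper, which derives the proposition directly from equation~\eqref{eq:charpops}, part 1 of Theorem~\ref{thm:mainthm}, Proposition~\ref{prop:cpcomm}, and the observation that $\mathcal{C}$ is a transversal of $\widehat{C}/\!\!\sim$. The only difference is that you spell out the verification that $\mathcal{C}_k(\lambda)$ is the complement of the image of $s$ (which the paper declares ``clear from the definition''), and your shape computation and column characterization there are accurate.
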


In particular, taking $\lambda$ to be the empty partition, we obtain:
%\cite[Corollary 5.13]{RRV-CLpop}, we deduce \cite{brv-fullversion}:
\begin{corollary}\label{cor:kthetal}
  Fix $n \geq 2$ and consider the partition $\theta=(2,1,1,\cdots,1,0)$ with $n-1$ nonzero parts and $|\theta|=n$. For $k \geq 0$, let $\mathcal{C}_k$ denote the set of CSFs $\,F$ of shape $k\theta$ and entries in $[n]$, with the property that either $1$ occurs in the first column of $F$ or $1$ does not occur in its last column. Then
    \[\chil = \sum_{k \geq 0} \sum_{F \in \mathcal{C}_k} \,\xtil^F \,q^{k^2-\inv(F)}.\]
\end{corollary}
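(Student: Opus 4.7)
The strategy is to derive Corollary~\ref{cor:kthetal} as an immediate specialization of Proposition~\ref{prop:charcsf} to the case of the empty partition $\lambda = \emptyset$. The empty partition trivially has length $0 < n$ and $|\emptyset| = 0$ is a multiple of $n$, so the hypotheses of Proposition~\ref{prop:limprop} (and hence of Proposition~\ref{prop:charcsf}) are satisfied. Moreover, $\mathcal{C}_k(\emptyset)$ coincides by definition with $\mathcal{C}_k$ as introduced in the corollary (the $\mathcal{C}_0$ case contributes only the empty filling, with $\inv = 0$ and $\xtil^F = 1$, accounting correctly for the $k=0$ term).

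The only calculation needed is to evaluate the exponent $\tfrac{1}{2}\lVert k\theta \rVert^2$ appearing in Proposition~\ref{prop:charcsf}. Since $\theta = (2,1,1,\ldots,1,0) \in \mathbb{R}^n$ has $|\theta| = n$, and $\sum_i \theta_i^2 = 4 + (n-2) + 0 = n+2$, we compute
\[
\lVert \theta \rVert^2 = \sum_i \theta_i^2 - \frac{|\theta|^2}{n} = (n+2) - \frac{n^2}{n} = 2.
\]
Since $\lVert \cdot \rVert^2$ is a quadratic form, $\lVert k\theta \rVert^2 = k^2 \lVert \theta \rVert^2 = 2k^2$, whence $\tfrac{1}{2}\lVert k\theta \rVert^2 = k^2$. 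Substituting this into Proposition~\ref{prop:charcsf} with $\lambda = \emptyset$ yields exactly the formula
\[
\chil = \sum_{k \geq 0} \sum_{F \in \mathcal{C}_k} \xtil^F\, q^{k^2 - \inv(F)},
\]
as desired.

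There is essentially no obstacle: the corollary is a direct specialization plus a one-line norm computation. The only mild subtlety worth double-checking is that the definition of $\mathcal{C}_k$ in the corollary correctly matches the definition of $\mathcal{C}_k(\lambda)$ from Proposition~\ref{prop:charcsf} when $\lambda = \emptyset$ (both single out fillings of shape $k\theta$ in which the entry $1$ either occurs in the first column or does not occur in the last column), which is immediate from the definitions.
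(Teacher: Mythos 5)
Your proposal is correct and is exactly the paper's argument: the corollary is obtained by specializing Proposition~\ref{prop:charcsf} to $\lambda = \emptyset$, and the norm computation $\tfrac{1}{2}\lVert k\theta\rVert^2 = k^2$ (valid for all $n \geq 2$, including the degenerate case $\theta = (2,0)$ when $n=2$) is the only calculation needed.
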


\section{Lattice path formalism}\label{sec:lattpathwheeler}
\subsection{} This section is inspired by \cite{wheeler-lectures} which contains an interpretation of the \qw polynomial in terms of coloured paths on a square lattice (see also \cite{borodin-wheeler-spinwhittaker} and \cite{GarbaliWheeler, borodin-wheeler}). Our lattice paths are modelled on those of \cite[Talk  3]{wheeler-lectures}, with a mild change of convention in the way the paths are drawn for better conformity with our formalism thus far. As in \cite{wheeler-lectures}, the $q$-weight of an ensemble of paths will be the total number of intersections of a certain type. The new insight stemming from our Theorem~\ref{thm:mainthm} is that rather than just the gross total, keeping track of intersections in {\em each tile} of the square lattice encodes the partition overlay data for {\em quinv}. Moreover, keeping track of non-intersections provides a simultaneous readout of the {\em inv} overlay as well. Our lattice path diagrams thus provide a visual demonstration\footnote{There is yet another lattice path model for the \qw polynomial in which the paths are allowed to ``wrap around" the grid, with the $q$-weights being related to the location of the wraparounds \cite[Talk 3]{wheeler-lectures}, \cite{GarbaliWheeler}. A natural bijection between these models is not known.} of Theorem~\ref{thm:mainthm}.

Following \cite{GarbaliWheeler, wheeler-lectures}, we start with a grid of cells with $n$ rows and $n$ columns, with the labelling shown in Figure~\ref{fig:lat-sqr}. Each cell of the grid will be called a {\em tile}. Our labelling conventions differ slightly from \cite{GarbaliWheeler, wheeler-lectures}.

\begin{figure}[h]   
\begin{center}
\vspace{0.3cm}
\begin{tikzpicture}[scale=0.5,baseline=(current bounding box.center)]
\foreach\x in {0,...,4}{
\draw (0,2*\x) -- (4,2*\x);
}
\foreach\x in {0,...,4}{
\draw[dashed] (4,2*\x) -- (6,2*\x);
}
\foreach\x in {0,...,4}{
\draw (6,2*\x) -- (8,2*\x);
}
\foreach\y in {0,...,4}{
\draw (2*\y,0) -- (2*\y,2);
}
\foreach\y in {0,...,4}{
\draw[dashed] (2*\y,2) -- (2*\y,4);
}
\foreach\y in {0,...,4}{
\draw (2*\y,4) -- (2*\y,8);
}
\node[left] at (-0.5,7) {\tiny $1$};
\node[left] at (-0.5,5) {\tiny$2$};
\node[left] at (-0.5,1) {\tiny $n$};
\node[left] at (7.5,8.5) {\tiny $1$};
\node[left] at (1.5,8.5) {\tiny$n$};
\end{tikzpicture}~
\vspace{0.3cm}
\end{center}
\caption{}
\label{fig:lat-sqr}
\end{figure}

Now let $F \in \csf$. If $F$ is just a single column, we represent it as a lattice path as follows: if the entries of $F$ are $i_1 <i_2 < \cdots < i_k$, its lattice path: (i) starts at the right edge of the grid in row $i_1$ and moves left or down (ii) has exactly $k$ horizontal steps, these being in rows $i_1, i_2, \cdots, i_k$, and  (iii) the horizontal steps (except the first) straddle two successive columns of the grid. For example, 
\ytableausetup{mathmode, smalltableaux}
$F=\;\begin{ytableau}
  1 \\
   3  \\
   4  
  \end{ytableau}\;$ is represented by the lattice path shown in Figure~\ref{fig:quinv_dig_singlecol}.
 
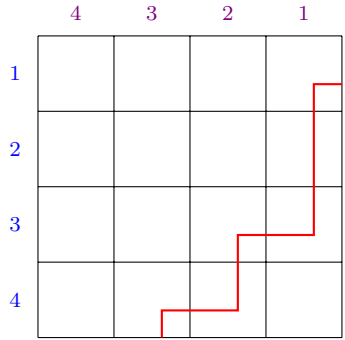
\begin{figure}[ht]
\begin{center}
\begin{tikzpicture}    
\draw(1,2)node{\begin{tikzpicture} \draw[] grid (4,4); \end{tikzpicture}};
\draw[color=red,thick](3,3.36)--(2.63,3.36)--(2.63,1.36)--(1.63,1.36)--(1.63,0.36)--(0.63,0.36)--(0.63,0.0);
\foreach\i in{0,...,3}{
\pgfmathtruncatemacro{\k}{4-\i};
\draw[color=blue](-1.3,\i+0.5)node{$\scriptstyle \k$};};
\foreach\j in{0,...,3}{
\pgfmathtruncatemacro{\l}{4-\j};
\draw[color=violet] (\j-0.5,4.3)node {$\scriptstyle \l$};};
\end{tikzpicture}
\caption{Lattice path corresponding to a single-column CSF.}
\label{fig:quinv_dig_singlecol}
\end{center}
\end{figure}

 Given a lattice path $\beta$ and a tile $X$, let $\beta_X$ denote the (possibly empty) intersection of $\beta$ and $X$, i.e., the portion of $\beta$ that lies in $X$. We observe that a lattice path can intersect a tile in only one of 3 possible configurations - these are labelled I, II, III in Figure~\ref{fig:csf-dig}. 

\begin{figure}[h]
\begin{center}
\begin{tikzpicture}
\draw(0.0,0.0)node{
\begin{tikzpicture}[scale=1]
\draw[] grid (1,1);
\draw(0.54,1.00)--(0.54,0.00);
\draw(0.50,-0.30)node{I};
\end{tikzpicture}};
\draw(4.0,0.0)node{\begin{tikzpicture}[scale=1]
\draw[] grid (1,1);
\draw(1.00,0.45)--(0.54,0.45)--(0.54,0.00);
\draw(0.50,-0.30)node{II};
\end{tikzpicture}};
\draw(8.0,0.0)node{
\begin{tikzpicture}[scale=1]
\draw[] grid (1,1);
\draw(0.54,1.00)--(0.54,0.45)--(0.00,0.45);
\draw(0.50,-0.30)node{III};

\end{tikzpicture}
};

\end{tikzpicture}
\caption{A lattice path passing through a tile: possible configurations.}
\label{fig:csf-dig}
\end{center}
\end{figure}
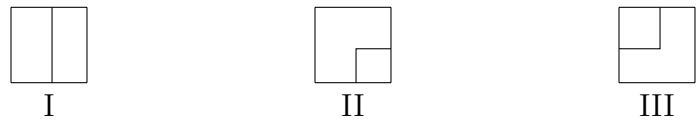

If $F$ has multiple columns, each column is individually represented as a lattice path as above, subject to the following {precedence rule} which determines the relative positioning of the paths. Let $C, C^{\,\prime}$ be columns of $F$ and let $\gamma, \gamma'$ denote the lattice paths they correspond to.
If the column $C$ occurs to the {\em left} of $C^{\,\prime}$ in $F$, then the precedence rule for drawing the paths states that for every tile $X$:
\begin{enumerate}
    \item The horizontal segment of $\gamma_X$ occurs {\em below} the horizontal segment of $\gamma'_X$ when both such segments exist, i.e., when $\gamma_X$ and $\gamma'_X$ are both nonempty and not of type I.
    \item The vertical segment of $\gamma_X$ occurs {\em to the right} of the vertical segment of $\gamma'_X$ when both segments exist, i.e., when $\gamma_X$ and $\gamma'_X$ are both nonempty. 
    \end{enumerate}
Informally, we may summarize this as ``$\gamma$ stays below and to the right of  $\gamma'$ wherever possible". Figure~\ref{fig:quinv_dig} shows an example of a CSF $F$ and its associated lattice path diagram, with the colour coding of the columns of $F$ matching those of the corresponding lattice paths. Note that this is our running example - this is the same $F$ considered in Figure~\ref{fig:zcfig} earlier.

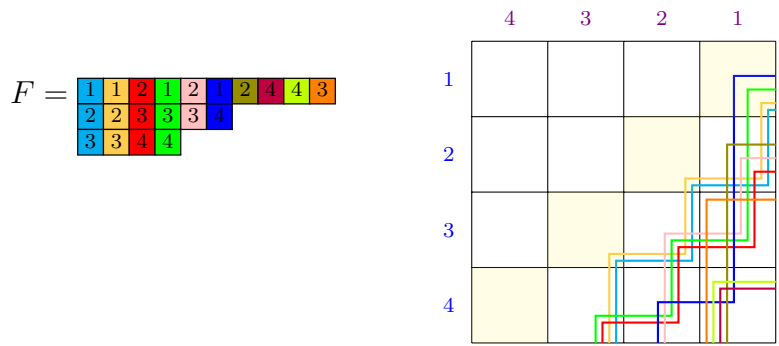
\begin{figure}[ht]
\begin{center}
\begin{tikzpicture}
%\begin{scope}
\draw(-5,3)node{
\ytableausetup{mathmode, smalltableaux}
$F=\;$\begin{ytableau}
  *(cyan) 1  & *(Mycolor8) 1 & *(red) 2 & *(green) 1 & *(pink) 2 & *(blue) 1 & *(olive) 2 & *(purple) 4 & *(lime) 4 & *(orange) 3\\
   *(cyan) 2  & *(Mycolor8) 2 & *(red) 3 & *(green) 3 & *(pink) 3 & *(blue) 4 \\
  *(cyan) 3  &*(Mycolor8) 3 & *(red) 4 & *(green) 4  
  \end{ytableau}};
%\end{tikzpicture}
%\begin{tikzpicture}[scale=1]
\draw(1,2)node{\begin{tikzpicture} \draw[] grid (4,4); \end{tikzpicture}};
\draw[color=cyan,thick](3,3.09)--(2.90,3.09)--(2.90,2.09)--(1.90,2.09)--(1.90,1.09)--(0.90,1.09)--(0.90,0.0);
\draw[color=Mycolor8,thick](3,3.18)--(2.81,3.18)--(2.81,2.18)--(1.81,2.18)--(1.81,1.18)--(0.81,1.18)--(0.81,0.0);
\draw[color=red,thick](3,2.27)--(2.72,2.27)--(2.72,1.27)--(1.72,1.27)--(1.72,0.27)--(0.72,0.27)--(0.72,0.0);
\draw[color=green,thick](3,3.36)--(2.63,3.36)--(2.63,1.36)--(1.63,1.36)--(1.63,0.36)--(0.63,0.36)--(0.63,0.0);
\draw[color=pink,thick](3,2.45)--(2.54,2.45)--(2.54,1.45)--(1.54,1.45)--(1.54,0.00);
\draw[color=blue,thick](3,3.54)--(2.45,3.54)--(2.45,0.54)--(1.45,0.54)--(1.45,0.00);
\draw[color=olive,thick](3,2.63)--(2.36,2.63)--(2.36,0.00);
\draw[color=purple,thick](3,0.72)--(2.27,0.72)--(2.27,0.00);
\draw[color=lime,thick](3,0.81)--(2.18,0.81)--(2.18,0.00);\draw[color=orange,thick](3,1.90)--(2.09,1.90)--(2.09,0.00);
\fill [yellow, opacity=0.1] (-1,0) rectangle (0,1);
\fill [yellow, opacity=0.1] (0,1) rectangle (1,2);
\fill [yellow, opacity=0.1] (1,2) rectangle (2,3);
\fill [yellow, opacity=0.1] (2,3) rectangle (3,4);
\foreach\i in{0,...,3}{
\pgfmathtruncatemacro{\k}{4-\i};
\draw[color=blue](-1.3,\i+0.5)node{$\scriptstyle \k$};};
\foreach\j in{0,...,3}{
\pgfmathtruncatemacro{\l}{4-\j};
\draw[color=violet] (\j-0.5,4.3)node {$\scriptstyle \l$};};
%\end{scope}
\end{tikzpicture}
\caption{A CSF and its lattice path diagram (colour coded).}
\label{fig:quinv_dig}
\end{center}
\end{figure}

Consider the tiles on the antidiagonal of the grid (shaded yellow in the example of Figure~\ref{fig:quinv_dig}). Let $X$ be a tile to the right of and/or below this antidiagonal, i.e., a tile with column label $i$ and row label $j+1$, with $1 \leq i \leq j <n$; we denote this tile $X_{ij}$. We have the following simple lemma, which follows from equations~\eqref{eq:cellcard} and \eqref{eq:cupc}.
\begin{lemma}\label{lem:nesepic}
Let $1 \leq i \leq j <n$. With notation as above, the number of type II lattice paths in $X_{ij}$ is $\NE_{ij}(T)$ and the number of type I paths is $\SE_{ij}(T)$. \qed 
\end{lemma}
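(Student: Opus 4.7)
The plan is to identify, for each column $C$ of $F$, which tiles of the grid receive a Type~II or Type~I contribution from the associated lattice path $\beta$, and then match these counts with the sets enumerated by equations~\eqref{eq:cellcard} and \eqref{eq:cupc}.

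The key preliminary observation is geometric: if $C$ has entries $i_1 < i_2 < \cdots < i_k$, then for each $1 \le r \le k$, the $r^{\text{th}}$ horizontal segment of $\beta$ lies in grid column $r$ at row $i_r$, with the turn from horizontal to vertical taking place within this tile; the $r^{\text{th}}$ vertical of $\beta$ likewise lies in grid column $r$, running from row $i_r$ down to row $i_{r+1}$ (or to the bottom of the grid if $r=k$). I would verify this by induction on $r$, using that the path enters at the right edge of the grid in row $i_1$ and that each horizontal step advances it one grid column to the left; the precedence rule only offsets segments within their grid columns and does not alter which tiles they inhabit.

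Given this geometry, a Type~II intersection in $X_{ij}$ occurs precisely when some column $C$ has its $i^{\text{th}}$ horizontal at row $j+1$, i.e., $i_i = j+1$. By column strictness, $i_i$ is the entry of $C$ in row $i$ of $F$, so Type~II paths in $X_{ij}$ are in bijection with cells in $\cells(i,j,F)$, and equation~\eqref{eq:cellcard} yields the count $\NE_{ij}(T)$. Similarly, a Type~I passthrough in $X_{ij}$ arises when the $i^{\text{th}}$ vertical of some path traverses all of row $j+1$, i.e., when $i_i \le j$ and either $i=k$ or $i_{i+1} \ge j+2$; equivalently, $C$ has its $i^{\text{th}}$ entry $\le j$ and does not contain $j+1$. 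Translating to cells in row $i$ of $F$, the contributing columns correspond exactly to the cells $c$ with $F(c) \le j$ such that either $c$ has no cell beneath it in $\dg(\lambda)$, or $F(c^\downarrow) > j+1$; these are precisely the cells of $C \setminus \upop(C')$ from the proof of Proposition~\ref{prop:zcounts}, and equation~\eqref{eq:cupc} gives the count $\SE_{ij}(T)$.

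The main delicate point is the preliminary geometric observation. I need to confirm that the precedence-rule offsets never push a horizontal or vertical of $\beta$ out of the grid column it notionally occupies, so that the tile-by-tile analysis above is valid for every path irrespective of the column's position among the columns of $F$. Once this is secured, the lemma reduces to the bookkeeping identities \eqref{eq:cellcard} and \eqref{eq:cupc}.
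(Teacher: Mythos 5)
Your argument is correct and is exactly the fleshed-out version of what the paper intends: the paper gives no proof beyond asserting that the lemma follows from \eqref{eq:cellcard} and \eqref{eq:cupc}, and your tile-by-tile tracking of where each column's path turns (type II) versus passes through (type I) is the right way to reduce to those two identities. Two small imprecisions are worth fixing, though neither breaks the proof. First, for $r\ge 2$ the $r^{\text{th}}$ horizontal step straddles grid columns $r-1$ and $r$; only its turning portion (the type II piece) lies in column $r$, the leftover piece in column $r-1$ being a type III configuration that is irrelevant to the count. Second, your parenthetical ``equivalently, $C$ has its $i^{\text{th}}$ entry $\le j$ and does not contain $j+1$'' is not actually equivalent to ``$i_i\le j$ and ($i=k$ or $i_{i+1}\ge j+2$)'' --- e.g.\ a column with entries $(1,2,3)$ and $j=5$ satisfies the former but its first vertical never reaches row $j+1$; fortunately your subsequent cell-level condition ($F(c)\le j$ and either no cell beneath $c$ or $F(\down[c])>j+1$) is the translation of the correct condition, which matches $C-\upop(C')$ and hence $\SE_{ij}(T)$ via \eqref{eq:cupc}, so the conclusion stands.
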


\subsection{}
Given $F \in \csf$, we can now extract our bijections $\psi_{\inv}$ and $\psi_{\quinv}$ from the lattice paths diagram of $F$ via the following steps:
\begin{enumerate}
    \item {\em Declutter:} If a lattice path $\gamma$ in the diagram intersects a tile $X$ in configuration III, then delete that portion of $\gamma$, i.e., delete $\gamma_X$ from the diagram (it will be irrelevant to what follows), leaving the rest of $\gamma$ unchanged.
    \smallskip\item {\em Count intersections:} Lattice paths in each tile are now in either of the configurations I or II. In each tile, put a solid circle  to mark the points where a path of type I intersects one of type II:  
    \[\begin{tikzpicture}[scale=1]
\draw[] grid (1,1);
\draw[color=red](0.5,1.00)--(0.5,0.00);
\draw[color=blue](1.00,0.70)--(0.20,0.70)--(0.20,0.00);
\fill (0.5,0.7) circle[radius=0.6mm];
\end{tikzpicture}\]
It readily follows from the definition of quinv-triples and our path drawing convention that each such intersection corresponds to a quinv triple in $F$. 
    \item {\em Count non-intersections:} Likewise, when a tile contains a path of type I and one of type II which fail to intersect, we keep track of this non-intersection by drawing an open circle at the point of the type I path where the extended horizontal segment of the type II path would have intersected it:
    \[\begin{tikzpicture}[scale=1]
\draw[] grid (1,1);
\draw[color=red](0.2,1.00)--(0.2,0.00);
\draw[color=blue](1.00,0.70)--(0.50,0.70)--(0.50,0.00);
\draw (0.2,0.7) circle[radius=0.6mm];
\end{tikzpicture}\]
As above, each such non-intersection corresponds to an inv triple in $F$. 
    \item As in Lemma~\ref{lem:nesepic}, consider the tiles $X_{ij}, \, 1 \leq i \leq j <n$ that are strictly to the right and below the antidiagonal of the grid. The solid circles in each of these tiles form the Ferrers diagram of a partition (after a left-right flip). 
    For example, in Figure~\ref{fig:quinv_dig_mod}, the tile $X_{11}$ (the last tile in the second row) has the solid dot configuration \[\begin{matrix} {\bullet} & {\bullet} \\ & {\bullet} \end{matrix}\] which we interpret as the partition $2+1$. These partitions encode the overlay for $\psi_{\quinv}$; more precisely, let $\Lambda_{ij}$ be the partition formed by the solid circles in the tile $X_{ij}$ and let $\Lambda=(\Lambda_{ij})_{1 \leq i \leq j <n}$ be the tuple. Then $\psi_{\quinv}(F) = (\rsort(F),\Lambda)$. In particular, $\quinv(F)$ is the total number of solid circles in the lattice paths diagram.
    \smallskip\item Likewise, the open circles in these tiles form Ferrers diagrams of partitions (after an up-down flip); for instance the tile $X_{11}$ in Figure~\ref{fig:quinv_dig_mod} contains \[\begin{matrix} {\circ} & \\ {\circ} & {\circ} \end{matrix}\] which we interpret as $2+1$. Again, letting $\tilde{\Lambda}_{ij}$ denote the partition formed by the open circles in  $X_{ij}$ and  $\tilde{\Lambda}:=( \tilde{\Lambda}_{ij})_{1 \leq i \leq j <n}$, we have $\psi_{\inv}(F) = (\rsort(F),\tilde{\Lambda})$. As above, $\inv(F)$ is the total number of open circles in the lattice paths diagram.
    \smallskip\item This picture also serves to visually demonstrate the box complementation relation. Observe that the partitions formed by the open and closed circles in each tile together constitute a rectangular configuration of dots. In a given tile $X=X_{ij}$, this rectangle has dimension $r \times s$ where $r, s$ are the numbers of type~II and type~I paths in $X$ respectively. In view of Lemma~\ref{lem:nesepic}, these numbers coincide precisely with $\NE_{ij}(T)$ and $\SE_{ij}(T)$ where $T=\rsort(F)$. 
\end{enumerate}
We leave the easy verifications of our assertions in Steps (4)-(6) above to the reader.
Figure~\ref{fig:quinv_dig_mod}(a) shows the decluttered diagram obtained from Figure~\ref{fig:quinv_dig}, while Figure~\ref{fig:quinv_dig_mod}(b) shows the solid and open circles that track intersections and non-intersections. The quinv and inv overlays obtained in steps 4, 5 of the above algorithm are exhibited in Figure~\ref{fig:overlay}. Observe that the quinv overlays match those in Figure~\ref{fig:gt-ssyt}, in conformity with the example in section~\ref{sec:eg}.

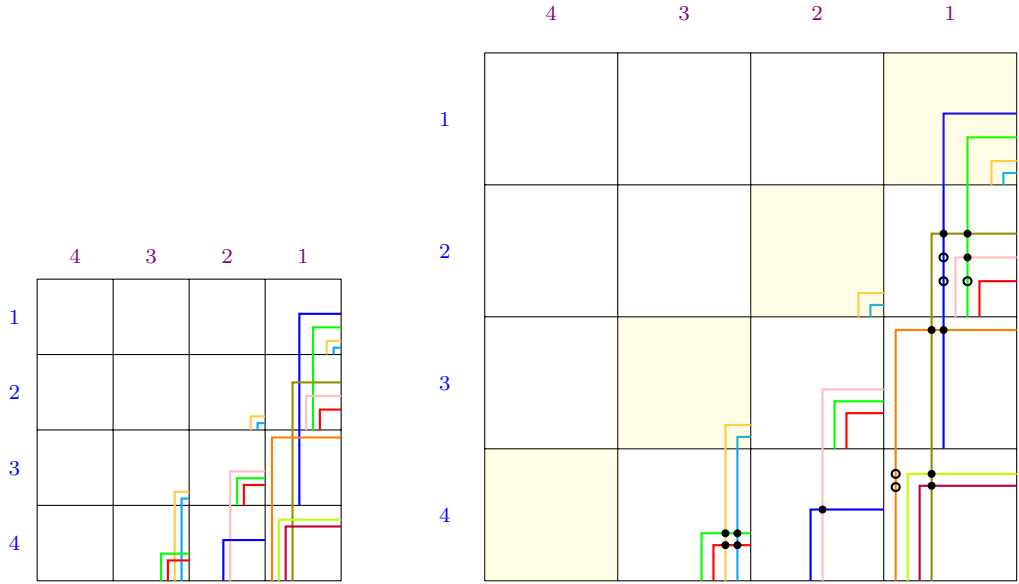
\begin{figure}[t]
\begin{center}
\begin{tikzpicture}[scale=1]
\draw(-1,0) grid (3,4);
\draw[color=cyan,thick](3,3.09)--(2.90,3.09)--(2.90,3);
\draw[color=cyan,thick] (2,2.09)--(1.90,2.09)--(1.90,2);
\draw[color=cyan,thick] (1,1.09)--(0.90,1.09)--(0.90,0.0);

%\draw[color=Mycolor8,thick](3,3.18)--(2.81,3.18)--(2.81,2.18)--(1.81,2.18)--(1.81,1.18)--(0.81,1.18)--(0.81,0.0);
\draw[color=Mycolor8,thick](3,3.18)--(2.81,3.18)--(2.81,3);
\draw[color=Mycolor8,thick] (2,2.18)--(1.81,2.18)--(1.81,2);
\draw[color=Mycolor8,thick] (1,1.18)--(0.81,1.18)--(0.81,0.0);

\draw[color=red,thick](3,2.27)--(2.72,2.27)--(2.72,2); 
\draw[color=red,thick](2,1.27)--(1.72,1.27)--(1.72,1); 
\draw[color=red,thick](1,0.27)--(0.72,0.27)--(0.72,0.0);
\draw[color=green,thick](3,3.36)--(2.63,3.36)--(2.63,2);
\draw[color=green,thick](2,1.36)--(1.63,1.36)--(1.63,1); 
\draw[color=green,thick](1,0.36)--(0.63,0.36)--(0.63,0.0);
\draw[color=pink,thick](3,2.45)--(2.54,2.45)--(2.54,2); 
\draw[color=pink,thick](2,1.45)--(1.54,1.45)--(1.54,0.00);
\draw[color=blue,thick](3,3.54)--(2.45,3.54)--(2.45,1); 
\draw[color=blue,thick](2,0.54)--(1.45,0.54)--(1.45,0.00);
\draw[color=olive,thick](3,2.63)--(2.36,2.63)--(2.36,0.00);
\draw[color=purple,thick](3,0.72)--(2.27,0.72)--(2.27,0.00);
\draw[color=lime,thick](3,0.81)--(2.18,0.81)--(2.18,0.00);
\draw[color=orange,thick](3,1.90)--(2.09,1.90)--(2.09,0.00);
\foreach\i in{0,...,3}{
\pgfmathtruncatemacro{\k}{4-\i};
\draw[color=blue](-1.3,\i+0.5)node{$\scriptstyle \k$};};
\foreach\j in{0,...,3}{
\pgfmathtruncatemacro{\l}{4-\j};
\draw[color=violet] (\j-0.5,4.3)node {$\scriptstyle \l$};};
\end{tikzpicture}\hspace{1cm}
% second figure
\begin{tikzpicture}[scale=1.75]
\draw(-1,0) grid (3,4);
\draw[color=cyan,thick](3,3.09)--(2.90,3.09)--(2.90,3);
\draw[color=cyan,thick] (2,2.09)--(1.90,2.09)--(1.90,2);
\draw[color=cyan,thick] (1,1.09)--(0.90,1.09)--(0.90,0.0);
%\draw[color=Mycolor8,thick](3,3.18)--(2.81,3.18)--(2.81,2.18)--(1.81,2.18)--(1.81,1.18)--(0.81,1.18)--(0.81,0.0);
\draw[color=Mycolor8,thick](3,3.18)--(2.81,3.18)--(2.81,3);
\draw[color=Mycolor8,thick] (2,2.18)--(1.81,2.18)--(1.81,2);
\draw[color=Mycolor8,thick] (1,1.18)--(0.81,1.18)--(0.81,0.0);
\draw[color=red,thick](3,2.27)--(2.72,2.27)--(2.72,2); 
\draw[color=red,thick](2,1.27)--(1.72,1.27)--(1.72,1); 
\draw[color=red,thick](1,0.27)--(0.72,0.27)--(0.72,0.0);
\draw[color=green,thick](3,3.36)--(2.63,3.36)--(2.63,2);
\draw[color=green,thick](2,1.36)--(1.63,1.36)--(1.63,1); 
\draw[color=green,thick](1,0.36)--(0.63,0.36)--(0.63,0.0);
\draw[color=pink,thick](3,2.45)--(2.54,2.45)--(2.54,2); 
\draw[color=pink,thick](2,1.45)--(1.54,1.45)--(1.54,0.00);
\draw[color=blue,thick](3,3.54)--(2.45,3.54)--(2.45,1); 
\draw[color=blue,thick](2,0.54)--(1.45,0.54)--(1.45,0.00);
\draw[color=olive,thick](3,2.63)--(2.36,2.63)--(2.36,0.00);
\draw[color=purple,thick](3,0.72)--(2.27,0.72)--(2.27,0.00);
\draw[color=lime,thick](3,0.81)--(2.18,0.81)--(2.18,0.00);
\draw[color=orange,thick](3,1.90)--(2.09,1.90)--(2.09,0.00);
\fill (0.90,0.36) circle[radius=0.3mm];
\fill (0.90,0.27) circle[radius=0.3mm];
\fill (0.81,0.36) circle[radius=0.3mm];
\fill (0.81,0.27) circle[radius=0.3mm];
\fill (1.54,0.54) circle[radius=0.3mm];
\fill (2.36,0.72) circle[radius=0.3mm];
\fill (2.36,0.81) circle[radius=0.3mm];
\fill (2.45,1.90) circle[radius=0.3mm];
\fill (2.36,1.90) circle[radius=0.3mm];
\fill (2.45,2.63) circle[radius=0.3mm];
\fill (2.63,2.63) circle[radius=0.3mm];
\fill (2.63,2.45) circle[radius=0.3mm];
%new ones
\draw[thick] (2.45,2.45) circle[radius=0.3mm];
\draw[thick] (2.63,2.27) circle[radius=0.3mm];
\draw[thick] (2.45,2.27) circle[radius=0.3mm];
\draw[thick] (2.09,0.81) circle[radius=0.3mm];
\draw[thick] (2.09,0.71) circle[radius=0.3mm];
\fill [yellow, opacity=0.1] (-1,0) rectangle (0,1);
\fill [yellow, opacity=0.1] (0,1) rectangle (1,2);
\fill [yellow, opacity=0.1] (1,2) rectangle (2,3);
\fill [yellow, opacity=0.1] (2,3) rectangle (3,4);
\foreach\i in{0,...,3}{
\pgfmathtruncatemacro{\k}{4-\i};
\draw[color=blue](-1.3,\i+0.5)node{$\scriptstyle \k$};};
\foreach\j in{0,...,3}{
\pgfmathtruncatemacro{\l}{4-\j};
\draw[color=violet] (\j-0.5,4.3)node {$\scriptstyle \l$};};
\end{tikzpicture}
\caption{(a) The decluttering of Figure~\ref{fig:quinv_dig}. (b) Marked intersections and non-intersections. Here $\quinv(F)=12$ and $\inv(F)=5$.}
\label{fig:quinv_dig_mod}
\end{center}
\end{figure}

%The overlays
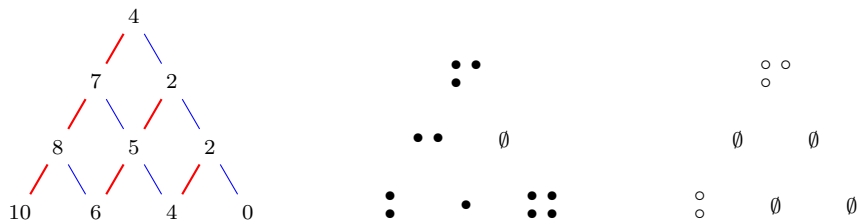
\begin{figure}[!h]
  \begin{center}
\begin{tikzpicture}[x={(1cm*0.5,-\rootthree cm*0.5)},y={(1cm*0.5,\rootthree cm*0.5)}]
  \draw(0,0)node(a00){$\scriptstyle{10}$};\draw(0,1)node(a01){$\scriptstyle{8}$};\draw(0,2)node(a02){$\scriptstyle{7}$};\draw(0,3)node(a03){$\scriptstyle{4}$};
  \draw(1,1)node(a11){$\scriptstyle{6}$};\draw(1,2)node(a12){$\scriptstyle{5}$};\draw(1,3)node(a13){$\scriptstyle{2}$};
  \draw(2,2)node(a22){$\scriptstyle{4}$};\draw(2,3)node(a23){$\scriptstyle{2}$};
  \draw(3,3)node(a33){$\scriptstyle{0}$};
  \foreach\i/\ii in{0/-1,1/0,2/1,3/2}
 \foreach\j/\jj in{0/-1,1/0,2/1,3/2}{
  \ifnum\i<\j \draw[color=red,thick](a\i\jj)--(a\i\j); \fi
  \ifnum\i>\j\else\ifnum\i>0 \draw[color=blue](a\ii\j)--(a\i\j);\fi\fi
}
\end{tikzpicture}
%% \qquad
%% \begin{tikzpicture}
%% \ytableausetup{mathmode, boxsize=0.3em}
%%   \draw(0,0) node {
%% \ytableaushort{
%%   1 1 1 1 2 2 2 3 4 4, 
%%   2 2 3 3 3 4,
%%   3 3 4 4}};
%% \end{tikzpicture}
\qquad\quad
\begin{tikzpicture}[x={(1cm*0.5,-\rootthree cm*0.5)},y={(1cm*0.5,\rootthree cm*0.5)}]
    \ytableausetup{mathmode, boxsize=0.5em}
  \draw(0,0)node{$\begin{smallmatrix} \bullet \\ \bullet \end{smallmatrix}$};\draw(0,1)node{$\begin{smallmatrix} \bullet & \bullet \end{smallmatrix}$};\draw(0,2)node{$\begin{smallmatrix} \bullet & \bullet \\ \bullet \end{smallmatrix}$};
  \draw(1,1)node{$\scriptstyle{\bullet}$};\draw(1,2)node{$\scriptstyle{\emptyset}$};
  \draw(2,2)node{$\begin{smallmatrix} \bullet & \bullet \\ \bullet & \bullet \end{smallmatrix}$};
\end{tikzpicture}
\qquad\quad
\begin{tikzpicture}[x={(1cm*0.5,-\rootthree cm*0.5)},y={(1cm*0.5,\rootthree cm*0.5)}]
  \draw(0,0)node{$\begin{smallmatrix} \circ \\ \circ \end{smallmatrix}$};\draw(0,1)node{$\scriptstyle{\emptyset}$};
  \draw(0,2)node{$\begin{smallmatrix} \circ & \circ \\ \circ \end{smallmatrix}$};
  \draw(1,1)node{$\scriptstyle{\emptyset}$};\draw(1,2)node{$\scriptstyle{\emptyset}$};
  \draw(2,2)node{$\scriptstyle{\emptyset}$};
\end{tikzpicture}
\caption{(a) $T=\rsort(F)$ for $F$ in Figure~\ref{fig:quinv_dig}. (b) The quinv overlays obtained from the solid circles in Figure~\ref{fig:quinv_dig_mod} (compare Figure~\ref{fig:gt-ssyt}). (c) The inv overlays obtained from the open circles in Figure~\ref{fig:quinv_dig_mod}.}
\label{fig:overlay}
\end{center}
\end{figure}

\printbibliography

\end{document}